\documentclass [12pt,a4paper]{article}

\usepackage[latin2]{inputenc}

\usepackage{amsmath}
\usepackage{amsthm}
\usepackage{amssymb}
\usepackage{latexsym}
\usepackage{enumerate}

\usepackage{graphics}
\usepackage{tikz}

\textwidth6.25in \textheight8.5in \oddsidemargin.25in

\topmargin0in

\newtheorem {theorem}{Theorem}

\newtheorem {lemma}{Lemma}

\newtheorem {proposition}{Proposition}
\newtheorem {definition}{Definition}

\newtheorem* {theorem*}{Theorem}
\newtheorem* {thm*}{Theorem}
\newtheorem* {lemma*}{Lemma}
\newtheorem* {corollary*}{Corollary}
\newtheorem* {prop*}{Proposition}
\newtheorem* {definition*}{Definition}
\newtheorem* {remark*}{Remark}

\def \A       {\mathcal{A}}
\def \E       {\mathcal{E}}
\def \M       {\mathcal{M}}
\def \N       {\mathbb N}
\def \Nn      {\mathbb M}

\def \R       {\mathbb R}
\def \T       {\mathcal T}
\def \ind     {1\!\!1}
\newcommand{\indd}[1]   {\ensuremath{ \ind \left[ #1\right]}}

\def \rbM     {\text{M}}
\def \rbN     {\text{N}}
\def \cF      {\mathcal{F}}
\def \cG      {\mathcal{G}}
\def \cS      {\mathcal{S}}

\newcommand{\abs}[1]{\left|{#1}\right|}

\newcommand{\prob}[1]    {\ensuremath{\mathbf{P}\left(#1\right)}}
\newcommand{\expect}[1]  {\ensuremath{\mathbf{E}\left(#1\right)}}

\newcommand{\condprob}[2]    {\ensuremath{\mathbf{P}\left(#1\,\big|\,#2\right)}}
\newcommand{\condexpect}[2]  {\ensuremath{\mathbf{E}\left(#1\, \middle| \,#2\right)}}


\def \probp    {\mathbf{P}}

\def \toindis  {\buildrel {d}\over{\longrightarrow}}
\def \ntoinf   {\buildrel {n \rightarrow \infty}\over{\longrightarrow}}

\def \phitetsz   {\varphi : [k]\rightarrow [n]}
\def \phiinj     {\varphi : [k]\hookrightarrow [n]}

\newcommand{\homsur}[2]     {\ensuremath{t_{\leq}\left( #1,#2\right)}}
\newcommand{\homind}[2]     {\ensuremath{t_{=}\left( #1,#2\right)}}
\newcommand{\hominj}[2]     {\ensuremath{t_{\leq}^0\left( #1,#2\right)}}
\newcommand{\hominjind}[2]  {\ensuremath{t_{=}^0\left( #1,#2\right)}}

\newcommand{\graphhom}[2]     {\ensuremath{\ind_{\leq} \left[ #1,#2,\varphi \right]}}
\newcommand{\graphhomm}[2]    {\ensuremath{\ind_{=} \left[ #1,#2,\varphi \right]}}

\def \rndperm                {\pi}
\def \rndinfperm             {\tau}
\def \zero                   {\mathbf{0}}
\newcommand{\randperm}[1]    {\ensuremath{\pi \left( #1 \right)}}
\newcommand{\disuniform}[1]  {\ensuremath{\xi \left( #1 \right)}}
\newcommand{\randarray}[3]   {\ensuremath{\left(X_{#1}^{#2}(i,j)\right)_{i,j=1}^{#3}}}
\newcommand{\rndarray}[2]    {\ensuremath{\mathbf{X}_{#1}^{#2}}}

\author{
{\sc Istv\'an Kolossv\'ary} \qquad and  \qquad {\sc  Bal\'azs R\'ath}
\\[5pt]
Institute of Mathematics \\ Budapest University of Technology (BME)
\\
{\sc
Egry J\'ozsef u. 1}
\\
{\sc H-1111 Budapest, Hungary}
\\
e-mail: {\tt \{istvanko,rathb\}{@}math.bme.hu}
}

\title{Multigraph limits and exchangeability}
\begin{document}

\maketitle

\bigskip

\begin{abstract}
The theory of limits of dense graph sequences was initiated by  Lov\'asz and Szegedy in \cite{Lovasz_Szegedy_2006}.
We give a possible generalization of this theory to multigraphs.
 Our proofs are based on the correspondence between dense graph limits and countable,
exchangeable arrays of random variables observed by Diaconis and Janson in \cite{diaconis_janson}. The main ingredient
in the construction of the limit object is  Aldous' representation theorem for exchangeable arrays, see \cite{aldous_exch}.
\end{abstract}

$ $

\noindent
{\bf Keywords:} \emph{dense graph limits, multigraphs, exchangeability}

\tableofcontents


\section{Introduction}\label{section_intro}

In recent years a limiting theory has been developed for dense graph sequences (in dense graphs the number of edges is comparable with $\abs{V(G)}^2$). Roughly speaking, a sequence $(G_n)_{n=1}^{\infty}$ of simple graphs converges if for any fixed \emph{testgraph} $F$, the density of copies of $F$ found in $G_n$ (called the \emph{homomorphism density}) converges as $n \to \infty$. It was shown  in \cite{Lovasz_Szegedy_2006} that the limit object can be represented by a symmetric measurable function $W: [0,1]^2 \to [0,1]$. Such functions are called \emph{graphons}.

In \cite{Lovasz_Szegedy_2006},   Subsection 6.2 the authors briefly discuss the possible generalization of the theory to multigraphs (graphs with multiple and loop edges), pointing out technical issues which arise because the number of edges possibly connecting two vertices in a multigraph is not bounded, which leads to a lack of the compactness properties used in their proofs. They also show that the notion of graphons is not suitable for defining the limits of multigraphs if the testgraphs are also allowed to be multigraphs.

$ $

In this paper we present a generalization of the theory of dense graph limits to multigraphs.

\begin{itemize}
\item In {\bf Section \ref{section_graphok}} we give a possible way to generalize  the notion of the M\"obius transform, homomorphism densities, graphons, gluing of $k$-labeled graphs,  reflection positivity and convergence of graph sequences to multigraphs.  We state the main result of this paper in Theorem \ref{theorem_main_result}, which is an analogue of Theorem 2.2 of \cite{Lovasz_Szegedy_2006} giving equivalent characterizations of the graph parameters arising as limits of homomorphism densities.
 Proposition \ref{uniqueness} guarantees that our collection of observables determines
  the observed multigraph uniquely. In Proposition \ref{proposition_tightness} we give a useful characterization of
 the precompact subsets of the space of limit objects called \emph{multigraphons}, which are of the form $W: [0,1]^2 \times \N_0 \to [0,1]$  .

\item Our methods
 are different from those used in \cite{Lovasz_Szegedy_2006}: In {\bf Section \ref{section_exchangeability}} we make a connection between multigraph limits and the theory of infinite exchangeable arrays of random variables (based on \cite{diaconis_janson} and \cite{austin_survey}): we generate countable random arrays using multigraphs and multigraphons to show that we can interpret the homomorphism densities as probabilities on a special probability space.
 The \emph{multiplicativity} of graph homomorphism densities corresponds to the \emph{dissociated} property of random arrays, convergence of multigraph sequences corresponds to convergence in distribution of random arrays. In \cite{Lovasz_Szegedy_2009} a parallel theory of \emph{consistent countable random graph models} is described: we give a short dictionary of the corresponding concepts in the different terminologies.

\item In {\bf Section \ref{section_aldous_theorem}} we state and prove Theorem \ref{theorem_aldous}, a representation theorem for exchangeable arrays.
 This theorem is stated but not proved in \cite{aldous_exch}, and proofs of variants of Theorem \ref{theorem_aldous} can be found in \cite{austin_survey} and \cite{kallenberg}, but in our opinion the self-contained and streamlined treatment of the proof helps to understand why
 Szemer\'edi's lemma can be replaced by Aldous' representation theorem in the construction of multigraphons.

\item In {\bf Section  \ref{section_proofs}} we prove Theorem \ref{theorem_main_result} following the cyclic structure of the proof of Theorem 2.2 of \cite{Lovasz_Szegedy_2006}. In many cases, the connection with infinite exchangeable arrays makes the proofs more transparent, e.g. the proof of the reflection positivity of multigraphon homomorphism densities became simpler and Azuma's inequality is no longer needed for the proof of the fact that every multigraphon is the limit object of a convergent graph sequence.
\end{itemize}

The methods of this paper can be applied to give a multigraph generalization of Theorem 3.2 of \cite{Lovasz_Szegedy_2009} relating isolate-indifferent graph parameters to random graphons. Also, Aldous' representation theorem can be useful in the description of the limit object of convergent sequences of weighted graphs. In \cite{austin_survey} representation theorems of higher dimensional random exchangeable arrays are used to describe the limit objects of convergent hypergraph sequences.

\medskip
Representation theorems similar to ours can be found in the literature:

The theory describing the limit objects of weighted graph sequences with \emph{uniformly bounded edgeweights} is presented in \cite{Lovasz_Szegedy_200mikor}. The results therein are highly similar to ours (e.g. the limit objects are of form
$W: [0,1]^2 \times \N \to \R$ and a version of Aldous' representation theorem for exchangeable and dissociated arrays is proved using Szemer\'edi partitions), although some
 definitions are different (e.g. in their definition of the gluing of labeled multigraphs the adjacency matrices are summed whereas in our definition (see \eqref{adjmatrix_of_F1F2}) their maximum is considered, and their definition of homomorphism densities is related to the moment sequence of random variables, whereas ours is more related to the distribution function of the same random variables). The condition on the uniform boundedness of edgeweights in \cite{Lovasz_Szegedy_200mikor} (which is needed for certain compactness arguments) can be relaxed: in \cite{SzakacsLaci} it is shown that the limit of a convergent and uniformly $L^p$-bounded sequence of $\R$-valued graphons can itself be represented by a graphon if we only consider homomorphism densities of simple testgraphs in the definition of the convergence of graphons.
 
 The theory of multigraphons described in this paper
  fits into a more general framework
(worked out in \cite{Lovasz_Szegedy_compact} using Szemer\'edi partitions) in which limits of complete graphs
are studied where the edges are labeled by elements from a fixed compact
topological space $\cS$. In the special case when $\cS$ is the one
point compactification $\N \cup \infty $ of the natural numbers the limit objects
are basically equivalent with the ones studied in the present paper.
 The only difference is that for non-tight sequences the symbol $\infty$ appears
with a nonzero probability in the limit object. In general the limit object
is a measurable function from the unit square to the space of probability
distributions on $\cS $. In  Theorem 2.9 and Corollary 3.5 of \cite{austin_survey}  one finds even more general representation theorems of similar flavor for infinite, exchangeable graphs in which
edges are labeled by elements from a Borel space.

It is apparent from the extensive list of related results above that the theory of multigraphons is already implicitly
present in the literature, but in order to write the paper \cite{elatkotos} about the time evolution of the \emph{edge reconnecting model} we needed a reference in which Theorem \ref{theorem_main_result} (giving equivalent characterizations of the multigraph parameters arising as limits of homomorphism densities) is explicitly stated and proved.

$ $

\noindent
{\bf Acknowledgement.}
Bal\'azs R\'ath thanks L\'aszl\'o Lov\'asz and L\'aszl\'o Szak\'acs for introducing him to the theory of dense graph limits
(and raising the question of possible generalizations to multigraphs)
 and B\'alint T\'oth for turning his attention to the connection with exchangeability.
 Istv\'an Kolossv\'ary thanks Domokos Sz\'asz for introducing him to the theory of dense graph limits.
 We also thank the anonymous referees for their useful comments.

This research was  partially supported by the OTKA (Hungarian
National Research Fund) grants
K 60708 and CNK 77778 and Morgan Stanley Analytics Budapest.

\section{Definitions, statement of Theorem \ref{theorem_main_result}}\label{section_graphok}

 In this section we generalize  the definitions of \cite{Lovasz_Szegedy_2006} to multigraphs and state the generalization of Theorem 2.2 of
 \cite{Lovasz_Szegedy_2006}.

Let $\N:=\{1,2,\dots\}$, $\N_0:=\{0,1,2,\dots \}$ and $[k]=\{1,2,\dots,k\}$.

Denote by $\M$ the set of undirected multigraphs (graphs with multiple and loop edges).
Let $F \in \M$ with $\abs{V(F)}=k$. The adjacency matrix of a labeling of the multigraph $F$ with $[k]$
is denoted by $\left(A(i,j)\right)_{i,j=1}^k$, where $A(i,j) \in \N_0$ is the
number of edges connecting the vertices labeled by $i$ and $j$.
$A(i,j)=A(j,i)$ since the graph is undirected
 and  $A(i,i)$ is  two times the number of loop edges at vertex $i$ (thus $A(i,i)$ is an even number).

 An unlabeled multigraph is the equivalence class of labeled multigraphs where two labeled graphs are equivalent if one can be
 obtained by relabeling the other.
  Thus $\M$ is the set of these equivalence classes of multigraphs, which are also called isomorphism types.

 We denote the set of adjacency matrices of multigraphs on $k$ nodes by $\A_k$, thus
 \[\A_k = \left\{  A\in \N_0^{k \times k}\, :\, A^T=A, \, \forall \, i \in [k] \,\;\;
   2\, |\,   A(i,i)  \right\}. \]
  Let $\rbM,\rbN \subseteq \N$.
  \begin{align}
\label{general_index_set_adjacency}
 \A_{\rbN} &:=\left\{  A\in \N_0^{\rbN \times \rbN}\, :\, \forall \, i,j \in \rbN \; A(i,j)=A(j,i) , \, \forall \, i \in \rbN \,\;\;
   2\, |\,   A(i,i)  \right\}\\
\label{general_index_set_M_N_adjacency}
  \A_{\rbM,\rbN}&:=\left\{  A\in \N_0^{\rbM \times \rbN}\, :\, \forall \, i,j \in \rbM \cap \rbN  \; A(i,j)=A(j,i)
    , \, \forall \, i \in \rbM \cap \rbN  \,\;\;
   2\, |\,   A(i,i)  \right\}
    \end{align}

Let $f$ denote a multigraph parameter, that is $f: \M \to \R$. If $F \in \M$ and $A$ is the adjacency matrix of a labeling of $F$, then let
$f(A):=f(F)$. Conversely, if $f: \bigcup_{k=1}^{\infty} \A_k \to \R$ is constant on isomorphism classes, then $f$ defines a multigraph parameter.

If $A, A' \in \A_k$ then we say that $A \leq A'$ if $\forall\, i,j \in [k]\;  A(i,j) \leq A'(i,j)$.

If $A \in \A_k$ denote by $e(A)$ the number of edges:
\[e(A):=\frac12 \sum_{i,j=1}^k A(i,j) \]
Let $\E_k$ denote the set of adjacency matrices of multigraphs with no multiple edges:
\[ \E_k= \left\{\;  A \in \A_k \, : \,  \forall \, i \neq j \; A(i,j) \in \{0, 1\}, \;\; \forall  i \; A(i,i) \in \{0, 2 \}\;  \right\} \]
We say that the multigraph parameter $f$ is \emph{non-defective} (or briefly $f(\infty)=0$) if
\begin{equation}\label{def_nondefected}
\forall \, k \;\; \forall \, A_1,A_2, \dots \in \A_k, \; \lim_{n \to \infty} e(A_n)= +\infty  \quad \implies \quad
 \lim_{n \to \infty} f(A_n)=0
\end{equation}

\begin{definition}\label{def_mobius_transform}
The \emph{Möbius transform} of a function $f: \A_k \to \R$ is  defined by
  \begin{equation}\label{mobius_trans}
    f^{\dagger}(A)= \sum_{E \in \E_k} (-1)^{e(E)}f(A+E).
  \end{equation}
The \emph{inverse Möbius transform} of $g: \A_k \to \R $ is (formally) defined by
  \begin{equation}\label{mobius_inv}
    g^{-\dagger}(A)= \sum_{n =0}^{\infty}
    \sum_{ A' \in \A_k }
    \ind [A' \geq A \, , \, e(A')=n  ]\cdot g(A')               .
  \end{equation}
\end{definition}

 The infinite
 sum defining $g^{-\dagger}(A)$ converges for some $A \in \A_k$ if and only if it converges for all $A \in \A_k$.

If $f$, $g$  are multigraph parameters (i.e. their value is invariant under relabeling of vertices)
 then $f^{\dagger}$, $g^{-\dagger}$ are also multigraph parameters.

\begin{lemma}\label{mobius_inverzio_tetel}
%
%
\begin{equation}\label{mobius_inverzios_tetel_eq}
f: \A_k \to \R, \; f(\infty)=0 \quad \implies \quad  \left(f^{\dagger}\right)^{-\dagger} \equiv f
\end{equation}
\end{lemma}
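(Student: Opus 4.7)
The strategy is a direct Möbius-inversion computation on truncated partial sums, followed by a limiting argument that uses the hypothesis $f(\infty) = 0$. The essential combinatorial identity, which I would prove first, is that for every $D \in \A_k$,
\[
\sum_{E \in \E_k,\, E \leq D} (-1)^{e(E)} = \ind[D = 0].
\]
To see this, view $\A_k$ as a product of chains (one per upper-triangular position $(i,j)$ with step $s_{ij} = 1$ for $i < j$ and $s_{ii} = 2$) and factor the sum coordinatewise: at each ``active'' coordinate of $D$ (with $D(i,j) \geq s_{ij}$) the binary choice $E(i,j) \in \{0, s_{ij}\}$ contributes a local factor $1 + (-1) = 0$, while each inactive coordinate is forced to $E(i,j) = 0$ and contributes $1$, so the product vanishes unless $D = 0$.

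Next, consider the truncated partial sum $S_N(A) := \sum_{A' \geq A,\, e(A') \leq N} f^\dagger(A')$, which is a finite sum. Expanding $f^\dagger$ and substituting $B := A' + E$ (a bijection between $\{(A', E) : A' \geq A,\, E \in \E_k\}$ and $\{(B, E) : B \geq A + E,\, E \in \E_k\}$) and exchanging the two finite sums yields
\[
S_N(A) = \sum_{B \geq A} f(B)\, c_N(A, B), \qquad c_N(A, B) := \sum_{\substack{E \in \E_k,\ E \leq B - A \\ e(E) \geq e(B) - N}} (-1)^{e(E)}.
\]
When $e(B) \leq N$ the constraint on $e(E)$ is vacuous and the key identity forces $c_N(A, B) = \ind[B = A]$; hence for every $N \geq e(A)$ one has $S_N(A) = f(A) + R_N(A)$, where the ``boundary'' remainder
\[
R_N(A) := \sum_{\substack{B \geq A \\ N < e(B) \leq N + M}} f(B)\, c_N(A, B)
\]
only involves $B$ with $e(B) \in (N, N+M]$, with $M := \binom{k+1}{2}$ the maximum of $e(E)$ for $E \in \E_k$.

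The main obstacle is establishing $R_N(A) \to 0$. Non-defectivity supplies $\varepsilon_N := \sup_{e(B) > N} |f(B)| \to 0$, and a classical binomial identity ($\sum_{\ell = 0}^{j-1}(-1)^\ell \binom{p}{\ell} = (-1)^{j-1}\binom{p-1}{j-1}$) collapses $c_N(A, B)$ on the boundary layer to a coefficient of size $O_k(1)$. Since the count of $B$'s in the boundary layer grows polynomially in $N$, the naive bound $|R_N(A)| = O(\varepsilon_N \cdot N^{M-1})$ is not sufficient on its own, so the telescoping structure must be exploited. The cleanest route is to verify the identity first for $f$ of finite support—where $S_N(A)$ stabilizes in finitely many terms and the statement becomes immediate—and then extend to general non-defective $f$ by approximation via the truncations $f_M := f \cdot \ind[e(\cdot) \leq M]$; alternatively, one may reorganize $R_N(A)$ as an iterated one-dimensional telescoping sum along the $\binom{k+1}{2}$ coordinate chains, at each step using the one-variable identity $\sum_{n \geq 0}(g(n) - g(n + s_{ij})) = g(0) - \lim_n g(n)$ with the limit vanishing by non-defectivity.
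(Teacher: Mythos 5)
Your strategy coincides with the paper's: establish the inclusion--exclusion identity $\sum_{E\in\E_k,\,E \leq D}(-1)^{e(E)}=\ind[D=\zero]$ (which is \eqref{kiejti_kiveve_ha_egyezik} after the substitution $D=A''-A$), expand the $N$-truncated partial sum of $\left(f^{\dagger}\right)^{-\dagger}(A)$, substitute $B:=A'+E$, and argue that the boundary remainder $R_N(A)$ vanishes --- the last being exactly the step the paper marks with $\stackrel{\eqref{def_nondefected}}{=}$. You are also right that this is the crux, and that the crude bound $|R_N(A)|\le C_k\,\eps_N\,N^{\binom{k+1}{2}-1}$, $\eps_N:=\sup_{e(B)>N}|f(B)|$, does not by itself yield $R_N(A)\to 0$. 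However, neither of your two proposed repairs closes the gap. Route~(a) is circular: the error $\left(f^{\dagger}\right)^{-\dagger}(A)-\left(f_M^{\dagger}\right)^{-\dagger}(A)$ is again a limit of partial sums with polynomially many (in $N$) nonvanishing terms, each only of size $\sup_{e(B)>M}|f(B)|$, so it is subject to the very same obstacle. Route~(b) does not reorganize the same partial sums: iterated one-coordinate telescoping amounts to summing $f^{\dagger}$ over \emph{boxes} $\prod_p\{A(p),A(p)+s_p,A(p)+2s_p,\dots\}$, while $(f^{\dagger})^{-\dagger}$ is by definition the limit of partial sums over the \emph{simplices} $\{A'\ge A:\,e(A')\le N\}$; without absolute convergence the two summation schemes need not agree, or even converge.

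In fact the implication cannot be obtained from $f(\infty)=0$ alone. Take $k=2$ and $f(A):=(-1)^{e(A)}/\log(e(A)+2)$, which is non-defective. Writing $g(n):=(-1)^{n}/\log(n+2)$, one has $f^{\dagger}(A)=g(n)-3g(n+1)+3g(n+2)-g(n+3)$ with $n=e(A)$, and since $\#\{A'\in\A_2:\,e(A')=n\}=\binom{n+2}{2}$, the $N$-truncated sum defining $(f^{\dagger})^{-\dagger}(\zero_2)$ collapses (the interior coefficient of each $g(m)$ being a third difference of the quadratic $\binom{m+2}{2}$, hence zero) to $g(0)+c_{N+1}g(N+1)+c_{N+2}g(N+2)+c_{N+3}g(N+3)$ with $c_{N+1}-c_{N+2}+c_{N+3}=-(2N^2+8N+7)$; as $g$ alternates in sign, the boundary term oscillates with amplitude $\asymp N^2/\log N$ and the series diverges. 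So the lemma needs a hypothesis ensuring absolute convergence of $\sum_{A'\ge A}f^{\dagger}(A')$ --- most naturally $f^{\dagger}\ge 0$, which holds in every application the paper makes of Lemma~\ref{mobius_inverzio_tetel} (there $f^{\dagger}$ equals $t_{=}(\cdot,G)$, $t_{=}(\cdot,W)$, or a pointwise limit of such, all nonnegative). Under $f^{\dagger}\ge 0$ the reordering is licit, $R_N(A)$ does vanish, and both your argument and the paper's one-liner become correct; add that hypothesis rather than trying to squeeze the estimate out of non-defectivity. The paper's own $\stackrel{\eqref{def_nondefected}}{=}$ step has the same lacuna, so you did well to flag it.
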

\begin{proof}
First note that if $A, A'' \in \A_k$ then
\begin{equation} \label{kiejti_kiveve_ha_egyezik}
\sum_{E \in \E_k} \ind[ A'' \geq A+E]\cdot (-1)^{e(E)} =\ind[A=A'']
\end{equation}
\begin{align*}
\left(f^{\dagger}\right)^{-\dagger}(A)
 &=\lim_{N \to \infty} \sum_{n=0}^N \sum_{A' \in \A_k} \sum_{E \in \E_k} \ind [A' \geq A \, , \, e(A')=n  ] \cdot (-1)^{e(E)} f(A'+E)\\
 &\stackrel{\eqref{def_nondefected}}{=}
 \lim_{N \to \infty} \sum_{n=0}^N \sum_{A' \in \A_k} \sum_{E \in \E_k} \ind [A' \geq A \, , \, e(A'+E)=n  ] \cdot (-1)^{e(E)} f(A'+E)\\
&=\lim_{N \to \infty} \sum_{n=0}^N \sum_{A'' \in \A_k} \sum_{E \in \E_k} \ind [A'' \geq A+E \, , \, e(A'')=n  ] \cdot (-1)^{e(E)} f(A'')
 \stackrel{\eqref{kiejti_kiveve_ha_egyezik}}{=} f(A)
\end{align*}

\end{proof}
Note that if $f$ is a constant function then $f^{\dagger}\equiv 0$, thus $f(\infty)=0$ is essential for
$\left(f^{\dagger}\right)^{-\dagger} \equiv f$ to hold.

$ $

  Suppose $F,G \in \M$,  $\abs{V(F)}=k$,  $\abs{V(G)}=n$ and denote by $A \in \A_k$ and $B \in \A_n$ the adjacency matrices of $F$ and $G$.


Now we generalize the notion of \emph{graph homomorphism} to multigraphs. Let $\varphi:[k] \to [n]$.
 \begin{equation}\label{graphhom_roviden}
\graphhom{A}{B}:= \ind \left[ \, \forall i,j \in [k]: A(i,j)\leq B(\varphi(i),\varphi(j))\right]
\end{equation}
\begin{equation}\label{graphhom_roviden=}
\graphhomm{A}{B}:= \ind \left[ \, \forall i,j \in [k]: A(i,j)= B(\varphi(i),\varphi(j))\right]
\end{equation}
We call $\varphi$ a graph homomorphism of $F$ into $G$ if and only if $\graphhom{A}{B}=1$. This is a natural definition of an
edge-preserving mapping for multigraphs, furthermore if $F$ and $G$ are simple graphs, this new definition coincides with the graph homomorphism definition of
\cite{Lovasz_Szegedy_2006}.

\begin{definition}\label{def_hom_densities}
We define the \emph{homomorphism density} of $F$ into $G$ by
  \begin{equation}\label{hom_sur}
    \homsur{F}{G} :=\homsur{A}{B}:=  \frac{1}{n^k} \sum_{\phitetsz}\graphhom{A}{B} \, .
  \end{equation}
If we restrict the summation to injective maps $\phiinj$  and normalize by the number of such maps we get the \emph{injective homomorphism density}
  \begin{equation}\label{hom_inj}
    \hominj{F}{G} :=\hominj{A}{B}:= \frac{1}{n(n-1)\dots (n-k+1)} \sum_{\phiinj} \graphhom{A}{B} \, .
  \end{equation}
We also define the \emph{induced homomorphism density} of $F$ into $G$ by
  \begin{equation}\label{hom_ind}
    \homind{F}{G}:=\homind{A}{B}:= \frac{1}{n^k} \sum_{\phitetsz}\graphhomm{A}{B} \, ,
  \end{equation}
  \begin{equation}\label{hom_inj_ind}
    \hominjind{F}{G}:=\hominjind{A}{B}:= \frac{1}{n(n-1)\dots (n-k+1)} \sum_{\phiinj} \graphhomm{A}{B} \, .
  \end{equation}
If $\abs{V(F)}>\abs{V(G)}$ in \eqref{hom_inj} and \eqref{hom_inj_ind}, then $\hominj{F}{G}:=\hominjind{F}{G}:=0$.
\end{definition}

Even though we have used a particular labeled member of the isomorphism class $F$ and $G$  in Definition \ref{def_hom_densities}, the quantities are  well-defined for unlabeled graphs $F$ and $G$, since relabeling $F$ and $G$ does not change the value. Thus every fixed multigraph $G$ defines the multigraph parameters \homsur{\,\cdot\,}{G}, \hominj{\,\cdot\,}{G}, \homind{\,\cdot\,}{G}, $t_=^0(\,\cdot\, ,G)$.

For fixed $B$ and $\varphi$ we have $\graphhomm{\cdot}{B}^{-\dagger} \equiv \graphhom{\cdot}{B}$ and
 $\graphhom{\cdot}{B}^{\dagger} \equiv \graphhomm{\cdot}{B}$.



The homomorphism densities inherit this property, thus we have
\begin{align}
\label{hom_mobius_G}
  &\homsur{\cdot}{G}  \equiv \homind{\cdot}{G}^{-\dagger}
  &\homind{\cdot}{G}  \equiv \homsur{\cdot}{G}^{\dagger} \\
  &\hominj{\cdot}{G}  \equiv \hominjind{\cdot}{G}^{-\dagger}
  &\hominjind{\cdot}{G} \equiv \hominj{\cdot}{G}^{\dagger}
\end{align}

\begin{proposition}\label{uniqueness}
 $G \in \M$ is uniquely determined given
$\left(\homsur{F}{G}\right)_{F \in \M}$ and $\abs{V(G)}$.
\end{proposition}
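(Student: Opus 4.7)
The plan is to recover from the data $(t_\leq(F,G))_{F \in \M}$ and $n := |V(G)|$ first the injective homomorphism densities $t_\leq^0(\cdot, G)$, then via M\"obius inversion the induced injective densities $t_=^0(\cdot, G)$, and finally to read off the isomorphism type of $G$ from the support of $t_=^0(\cdot, G)$ restricted to testgraphs on $n$ vertices.

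For the first step, I would fix $F \in \M$ with $|V(F)| = k$ and split the sum in \eqref{hom_sur} according to the kernel partition $P$ of $[k]$ induced by $\varphi:[k]\to[n]$. If $|P| = j$ there are $(n)_j := n(n-1)\cdots(n-j+1)$ maps $\varphi$ with that kernel, and such a $\varphi$ is a homomorphism from $F$ into $G$ iff the associated injection of blocks is a homomorphism from the quotient multigraph $F/P$ into $G$; here $F/P$ has edge multiplicity $\max_{i \in p,\, j \in p'} A(i,j)$ between blocks $p, p'$, with loop entries rounded up to the nearest even integer (the rounding does not change the hom condition since diagonal entries of $B$ are already even). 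This gives
\[
n^k \, t_\leq(F, G) \;=\; \sum_{P} (n)_{|P|} \, t_\leq^0(F/P, G),
\]
summed over partitions $P$ of $[k]$. For $k \leq n$ the finest partition isolates $t_\leq^0(F, G)$ with nonzero coefficient $(n)_k$, so induction on $k$ recovers $t_\leq^0(F, G)$ for every $F$ with $|V(F)| \leq n$; for $|V(F)| > n$ the value is $0$ by convention.

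Next, I would observe that $t_\leq^0(\cdot, G)$ is non-defective in the sense of \eqref{def_nondefected}: any $A_m \in \A_k$ with $e(A_m) \to \infty$ eventually has an entry exceeding $\max_{u,v} B(u,v)$, forcing $t_\leq^0(A_m, G) = 0$. Lemma \ref{mobius_inverzio_tetel} and the identity $t_=^0(\cdot, G) \equiv t_\leq^0(\cdot, G)^{\dagger}$ from \eqref{hom_mobius_G} then determine $t_=^0(F, G)$ for every $F \in \M$ via the finite M\"obius sum \eqref{mobius_trans}. Finally, I would restrict to $F$ with $|V(F)| = n$: every injection $[n] \hookrightarrow [n]$ is a bijection and $\graphhomm{A}{B} = 1$ precisely when $\varphi$ is a graph isomorphism from $F$ onto $G$, so $t_=^0(F, G) = |\mathrm{Aut}(G)|/n!$ if $F \cong G$ and $0$ otherwise. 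The isomorphism class of $G$ is therefore the unique one on $n$ vertices where $t_=^0(\cdot, G)$ is strictly positive.

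The main obstacle is the bookkeeping in the first step: the quotient of a multigraph by a partition is not canonically an element of $\M$, because the ``max of multiplicities'' rule can produce loops of odd multiplicity, but the even-loop convention on $G$ absorbs the ambiguity. The remaining two steps are formal consequences of M\"obius inversion and the definition of $t_=^0$.
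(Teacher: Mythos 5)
Your proof is correct, and it takes a genuinely different route from the paper's. You work purely combinatorially: invert the standard kernel-partition decomposition of $t_\leq$ to recover the injective densities $t_\leq^0$ by induction on $|V(F)|$, apply the (finite) M\"obius transform to get $t_=^0$, and then read off $G$ as the unique isomorphism class on $n$ vertices where $t_=^0(\,\cdot\,,G)$ is positive, using $t_=^0(F,G)=\abs{\mathrm{Aut}(G)}/n!\,\ind[F\cong G]$. The paper instead passes through the probabilistic interpretation: it applies M\"obius to get $t_=(\,\cdot\,,G)$, recognizes these as the finite-dimensional marginals of the infinite random array $\mathbf{X}_G$, invokes Lemma \ref{theorem_kolmogorov} to assemble $\mathbf{X}_G$, and then recovers the quotient adjacency matrix $B_\simeq$ and the block frequencies $P_\simeq$ from $\mathbf{X}_G$ (the latter by the law of large numbers). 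Your argument is more elementary and self-contained for this specific statement --- it avoids Kolmogorov extension and the LLN entirely --- and correctly handles the only delicate point, the parity of diagonal entries in the quotient multigraph $F/P$. One small imprecision: you cite Lemma \ref{mobius_inverzio_tetel} and non-defectiveness of $t_\leq^0(\,\cdot\,,G)$ before applying the forward M\"obius transform $t_=^0\equiv(t_\leq^0)^\dagger$, but that transform is a finite sum over $\E_k$ and is always defined; non-defectiveness is only needed for the \emph{inverse} transform, which you never use. The paper's route, while heavier for this proposition, fits its larger program of translating everything into exchangeable-array language, and the objects $B_\simeq,P_\simeq$ it builds foreshadow the representation theory of Section \ref{section_aldous_theorem}.
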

We prove this proposition in Section \ref{section_exchangeability}.

\begin{definition}
A \emph{multigraphon} is a measurable $W:[0,1] \times [0,1] \times \N_0 \to [0,1]$ function satisfying
\begin{align}
\label{W_symetric_eq}
W(x,y,k) &\equiv W(y,x,k), \\
\label{W_nondefective_eq}
 \sum_{k=0}^{\infty} W(x,y,k) &\equiv 1,\\
 \label{W_diag_even}
 W(x,x, 2k+1)& \equiv 0.
\end{align}
\end{definition}
 For every multigraphon $W$ and multigraph $F$ with adjacency matrix $A \in \A_k$ we define
\begin{align}
\label{def_grafon_hom_sur}
 t_{\leq}(F,W)&:=\int_{[0,1]^k} \prod_{i\leq j \leq k}\, \sum_{l=A(i,j)}^{\infty}
W(x_i,x_j,l)\, \mathrm{d} x_1\, \mathrm{d} x_2\, \dots\, \mathrm{d}x_k\\
\label{def_grafon_ind_hom_sur}
 t_{=}(F,W)&:=\int_{[0,1]^k} \prod_{i\leq j \leq k}
W(x_i,x_j,A(i,j))\, \mathrm{d} x_1\, \mathrm{d} x_2\, \dots\, \mathrm{d}x_k
\end{align}
The functions $t_{\leq}(\cdot,W)$ and $t_{=}(\cdot,W)$ are indeed multigraph parameters: their value is invariant under relabeling.
 It is easy to see that
\begin{equation}\label{W_densities_mobius}
 \homsur{\cdot}{W}  \equiv \homind{\cdot}{W}^{-\dagger} \quad \quad \quad \quad
  \homind{\cdot}{W}  \equiv \homsur{\cdot}{W}^{\dagger}
  \end{equation}

If $G$ is a multigraph on $n$ nodes with adjacency matrix $B \in \A_n$, then let
\begin{equation}\label{graphon_generated_byG}
W_G(x,y,k)= \ind \left[ B(\lceil nx \rceil,\lceil ny\rceil )=k \right]
\end{equation}
be the multigraphon generated by $G$. Although the function $W_G$ depends on the choice of  the labeling of $G$,
 the value of $\homsur{\,\cdot\,}{W_G}$ is invariant under relabeling.
It is easy to see that
\begin{equation}\label{genralt_grafon}
\homsur{\,\cdot\,}{G}\equiv \homsur{\,\cdot\,}{W_G}, \quad \homind{\,\cdot\,}{G} \equiv \homind{\,\cdot\,}{W_G}.
\end{equation}

 Call a multigraph parameter $f$ \emph{normalized}, if $f(\zero_1)=1$, where $\zero_1$ is the graph with a single node and no edges.

  If $f$ satisfies $f(F_1F_2)=f(F_1)f(F_2)$, where $F_1F_2$ denotes the disjoint union of $F_1$ and $F_2$, then $f$ is \emph{multiplicative}.

If $f$ is normalized and multiplicative then $f(\zero_k)=1$ where $\zero_k$ denotes the graph with $k$ nodes and no edges.

   The multigraph parameters \homsur{\,\cdot\,}{G} and \homsur{\,\cdot\,}{W} are normalized, multiplicative and non-defective.

    The multigraph parameters \homind{\,\cdot\,}{G} and \homind{\,\cdot\,}{W} are neither normalized nor multiplicative.

A \emph{k-labeled multigraph} ($k\in \N_0$) is a finite graph with at least $k$ nodes, of which $k$ are labeled by $1,2,\dots,k$.
 For two $k$-labeled graphs $F_1$ and $F_2$, define $F_1F_2$ as the graph that one gets by taking their disjoint union, then identifying nodes with the same label, and the number of edges connecting two labeled nodes in $F_1 F_2$ is the maximum of the number of edges connecting them in $F_1$ and $F_2$. In the special case $k=0$, $F_1F_2$ is simply the disjoint union of the two graphs. Recall the defining equation \eqref{general_index_set_adjacency} of the set of adjacency matrices of multigraphs indexed by a general subset of $\N$.
  If we label the unlabeled vertices of $F_1$ and $F_2$ using disjoint subsets of $\N$ (thus $V(F_1) \cap V(F_2)=[k]$), and  if $A_1 \in \A_{V(F_1)}$ and $A_2 \in \A_{V(F_2)}$ are the adjacency matrices of $F_1$ and $F_2$ then the adjacency matrix of $F_1F_2$ is $A_1 \vee A_2 \in \A_{V(F_1)\cup V(F_2)}$:
\begin{equation}\label{adjmatrix_of_F1F2}
(A_1 \vee A_2) (i,j)=
\begin{cases}
  \max\{A_1(i,j),A_2(i,j)\} & \text{if } i,j\in [k] \\
  A_l(i,j)                  & \text{if } i,j\in V(F_l)\setminus[k] \\
  0                         & \text{otherwise}
\end{cases}
\end{equation}
If $F_1$ and $F_2$ are $k$-labeled simple graphs then this definition of $F_1F_2$ coincides with that of \cite{Lovasz_Szegedy_2006}.

For any multigraph parameter $f$ and integer $k\geq 0$ we define the \emph{connection matrix} $M(k,f)$ as an infinite matrix, whose rows and columns are indexed by (isomorphism classes of) $k$-labeled multigraphs. Its elements are $f(F_iF_j)$, where $F_i$ corresponds to the row which $F_i$ indexes and $F_j$ to the respective column (so $M(0,f)$ is a dyadic matrix if $f$ is multiplicative).

\begin{definition}\label{def_reflectionpos}
A graph parameter $f$ is \emph{reflection positive} if the connection matrices $M(k,f)$ are positive semidefinite for each $k\geq 0$.
\end{definition}


\begin{definition}\label{def_graf_konvergencia}
We say that a sequence $(W_n)_{n=1}^{\infty}$ of multigraphons is \emph{convergent} if $f(F)=\lim\limits_{n\rightarrow\infty}\homsur{F}{W_n}$ exists for every multigraph $F$, moreover $f$ is non-defective.

A sequence $(G_n)_{n=1}^{\infty}$ of multigraphs is convergent if $(W_{G_n})_{n=1}^{\infty}$ is convergent.

Let $\T$ denote the set of graph parameters $f$ arising as limits of multigraph sequences:
\begin{equation}\label{def_of_T_eq}
 f \in \T \quad \iff \quad \exists \left(G_n \right)_{n=1}^{\infty} \;
 \forall \, F \in \M \;\;  f(F)=\lim\limits_{n\rightarrow\infty}\homsur{F}{G_n} \; \text{ and }\; f(\infty)=0
 \end{equation}
\end{definition}
If $G_n$ is the multigraph with one vertex and $n$ loop edges then
$\lim\limits_{n\rightarrow\infty}\homsur{F}{G_n}=1$ for every $F \in \M$, but $f(F) \equiv 1$ does not satisfy $f(\infty)=0$, so
the sequence $(G_n)_{n=1}^{\infty}$ is not convergent in this case.

\begin{theorem}\label{theorem_main_result}
For a multigraph parameter $f$ the following are equivalent:
  \begin{enumerate}[(a)]
    \item $f \in \T.$
    \item There exists a multigraphon $W$ for which $f(\,\cdot\,)=t_{\leq}(\cdot,W)$.
    \item  $f$ is normalized, multiplicative, non-defective and reflection positive.
    \item  $f$ is normalized, multiplicative, non-defective and $f^{\dagger}\geq 0$.
  \end{enumerate}
\end{theorem}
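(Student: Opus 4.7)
The plan is to run the cyclic chain (a)$\Rightarrow$(c)$\Rightarrow$(d)$\Rightarrow$(b)$\Rightarrow$(a). The two short links at the ends are standard graph-limit arguments. For (a)$\Rightarrow$(c), the normalization, multiplicativity and non-defectiveness of $f$ are inherited termwise from $\homsur{\,\cdot\,}{G_n}$, and each connection matrix $M(k,\homsur{\,\cdot\,}{G_n})$ factors as a Gram matrix by splitting each homomorphism of $F_iF_j$ into a pair of partial homomorphisms of $F_i$ and $F_j$ agreeing on the $k$ labelled vertices, hence is positive semidefinite; PSD is preserved under entrywise limits. For (b)$\Rightarrow$(a), build a $W$-random multigraph $G(n,W)$ on $n$ vertices by drawing iid uniforms $X_1,\dots,X_n$ on $[0,1]$ and independent edge multiplicities $Y_{ij}\sim W(X_i,X_j,\cdot)$; a routine second-moment / Borel--Cantelli argument gives $\homsur{F}{G(n,W)}\to t_{\leq}(F,W)$ almost surely for each $F$, and the normalization $\sum_k W(x,y,k)\equiv 1$ rules out defectiveness of the limit parameter.

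For (c)$\Rightarrow$(d) the key is a short quadratic-form identity. Fix $A\in \A_k$ and, for each $E\in\E_k$, let $F_E$ denote the $k$-labelled multigraph on vertex set $[k]$ with adjacency matrix $A+E$. Consider the formal signed combination $v:=\sum_{E\in\E_k}(-1)^{e(E)}F_E$ in the $\R$-vector space spanned by isomorphism classes of $k$-labelled multigraphs, and evaluate its squared length under the bilinear form given by $M(k,f)$:
\[
\langle v,v\rangle_{M(k,f)}\;=\;\sum_{E,E'\in\E_k}(-1)^{e(E)+e(E')}\,f\bigl(A+(E\vee E')\bigr),
\]
where $E\vee E'$ is the entrywise maximum of $E$ and $E'$ (all $k$ vertices of $F_E$ and $F_{E'}$ are labelled, so by \eqref{adjmatrix_of_F1F2} the gluing $F_E F_{E'}$ has adjacency $A+(E\vee E')$). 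Counting, for each simple graph $D\in\E_k$, the pairs $(E,E')$ with $E\vee E'=D$ edge by edge yields $\sum_{E\vee E'=D}(-1)^{e(E)+e(E')}=(-1)^{e(D)}$, since every edge of $D$ contributes the factor $(-1)^{0+1}+(-1)^{1+0}+(-1)^{1+1}=-1$. Hence the right-hand side collapses to $\sum_{D\in\E_k}(-1)^{e(D)}f(A+D)=f^\dagger(A)$, and reflection positivity forces $f^\dagger(A)\geq 0$.

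The core step is (d)$\Rightarrow$(b), which invokes Aldous' representation theorem \ref{theorem_aldous}. Set $\nu_k(A):=f^\dagger(A)$ for $A\in\A_{[k]}$. One verifies that $(\nu_k)_{k\geq 1}$ is a consistent, exchangeable, dissociated family of probability measures: non-negativity is exactly (d); total mass one follows from $\sum_A f^\dagger(A)=(f^\dagger)^{-\dagger}(\zero_k)=f(\zero_k)=1$ via Lemma \ref{mobius_inverzio_tetel} and the normalization/multiplicativity of $f$; exchangeability under $S_k$ is the fact that $f^\dagger$ is a multigraph parameter. The consistency identity $\sum_{A''|_{[k]}=A}f^\dagger(A'')=f^\dagger(A)$ and the dissociated identity (summing $f^\dagger$ over all fillings of the cross edges between disjoint index sets factors as the product of the restricted $f^\dagger$'s) both reduce to the same sign-cancellation: after interchanging the signed Möbius sum over $\E$ with the unsigned sum over the new multi-edges $C$, for each new edge the weighted sum against $(-1)^{e(D)}$ (where $D$ is the simple-graph part) vanishes unless $C\equiv 0$, and multiplicativity then factors the resulting $f$ of a disjoint union. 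Kolmogorov extension now yields an exchangeable dissociated random array $(Y_{ij})_{i,j\in\N}\in\A_\N$; Aldous' theorem supplies iid uniforms $(X_i)_{i\in\N}$ and a multigraphon $W$ with $Y_{ij}\mid X_i,X_j\sim W(X_i,X_j,\cdot)$. Reading the $k$-point marginals gives $t_{=}(A,W)=\nu_k(A)=f^\dagger(A)$ for every $A\in\A_k$, whence the Möbius relations \eqref{W_densities_mobius} deliver $f=t_{\leq}(\,\cdot\,,W)$.

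The main technical load sits in the three checks of (d)$\Rightarrow$(b): each is an interchange of a signed Möbius sum with an unsigned marginal sum, and only survives because multiplicativity delivers exactly the right factorization on the disjoint-union configurations that the sign-cancellation leaves behind. The (c)$\Rightarrow$(d) identity, although short, is the conceptual key --- it is the finite-dimensional calculation which, together with Aldous' theorem, allows the argument to bypass Szemerédi's regularity lemma, as advertised in the introduction.
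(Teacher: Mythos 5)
Your proof is correct in outline but takes a genuinely different route from the paper: you run the cycle $(a)\Rightarrow(c)\Rightarrow(d)\Rightarrow(b)\Rightarrow(a)$, whereas the paper runs $(a)\Rightarrow(b)\Rightarrow(c)\Rightarrow(d)\Rightarrow(a)$. Your $(a)\Rightarrow(c)$ is a direct finite Gram decomposition of $M(k,\homsur{\cdot}{G_n})$ (splitting $\psi:V(F_iF_j)\to[n]$ at the labelled block and using that the gluing-by-maximum makes $\graphhom{A_i\vee A_j}{B}$ factor), followed by the closedness of the PSD cone under entrywise limits; the paper instead first passes to a multigraphon via Aldous' theorem and then proves reflection positivity probabilistically for $\homsur{\cdot}{W}$. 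Your $(c)\Rightarrow(d)$ evaluates the quadratic form at the signed Möbius vector $v=\sum_{E}(-1)^{e(E)}F_{A+E}$; this is exactly the same computation as the paper's $M=ZDZ^T$ with $Z$ inverted --- your $v$ is a row of $Z^{-1}$ --- only phrased without inverting $Z$. Your $(b)\Rightarrow(a)$ via $W$-random multigraphs is the natural sampling argument; the paper absorbs that step into its $(d)\Rightarrow(a)$, where the sequence is generated directly from the exchangeable array and a.s.\ convergence comes from L\'evy's downward theorem on the symmetric $\sigma$-algebra. Finally, your $(d)\Rightarrow(b)$ combines the paper's Kolmogorov construction from $(d)\Rightarrow(a)$ with its Aldous application from $(a)\Rightarrow(b)$: same content, shorter cycle. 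The trade-off is that the paper's ordering only applies Aldous once (at $(a)\Rightarrow(b)$), while yours only constructs $W$ once (at $(d)\Rightarrow(b)$) and keeps the reflection-positivity step entirely combinatorial.

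One technical spot to tighten: in your verification of consistency and dissociatedness of $\nu_k=f^\dagger$ you ``interchange the signed M\"obius sum over $\E$ with the unsigned sum over the new multi-edges $C$.'' The outer sum over the countably many extensions $C$ is absolutely convergent only for $f^\dagger$, not for the individual terms $f((A\oplus C)+E)$ appearing after the interchange (their sum over $C$ can diverge since $f$ only decays like a tail probability, not a point mass), so Fubini is not directly available. The paper avoids this by verifying $g^{-\dagger}\equiv f$ for the marginal $g(A)=\sum_{A''|_n=A}f^\dagger(A'')$, where every sum involved is of non-negative terms, and only then inverting; and for dissociatedness it works directly with $f(F_1F_2)=f(F_1)f(F_2)$ and a $\pi$-system argument, bypassing the sign-cancellation entirely. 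Your statement is true and the fix is routine (truncate in $e(C)$ and pass to the limit, or adopt the paper's M\"obius-inversion route), but as written the interchange is a gap.
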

We prove this theorem in Section \ref{section_proofs}.

We say that a sequence $(W_n)_{n=1}^{\infty}$ of multigraphons is \emph{tight} if
\begin{align}
\label{W_tight_nondiag}
\lim_{m \to \infty} \max_{n} \int_0^1 \int_0^1 \sum_{k=m}^{\infty} W_n(x,y,k) \, \mathrm{d} x \, \mathrm{d} y &=0 \\
\label{W_tight_diag}
\lim_{m \to \infty} \max_{n} \int_0^1  \sum_{k=m}^{\infty} W_n(x,x,k) \, \mathrm{d} x &=0
\end{align}
\begin{proposition}\label{proposition_tightness}
$ $

\begin{enumerate}[(i)]
\item \label{convergent->tight}
 A convergent sequence of multigraphons is tight.
\item \label{tight->convergent}
 A tight sequence of multigraphons contains a convergent subsequence.
\end{enumerate}
\end{proposition}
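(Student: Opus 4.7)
The plan is to reduce both parts to tracking the homomorphism densities of a distinguished family of test multigraphs whose $t_{\leq}$ values coincide with the tail integrals appearing in \eqref{W_tight_nondiag}--\eqref{W_tight_diag}.

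For (i), I would introduce $F_m$, the multigraph on two vertices joined by $m$ parallel edges, and $L_m$, the multigraph on a single vertex carrying $m$ loops. Using \eqref{W_nondefective_eq} to discard the trivial diagonal factors in \eqref{def_grafon_hom_sur}, a direct computation gives
\[
\homsur{F_m}{W_n} = \int_0^1\!\!\int_0^1 \sum_{l=m}^\infty W_n(x,y,l)\, \mathrm{d}x\, \mathrm{d}y, \qquad \homsur{L_m}{W_n} = \int_0^1 \sum_{l=2m}^\infty W_n(x,x,l)\, \mathrm{d}x,
\]
and both quantities are nonincreasing in $m$ for each fixed $n$. Since $e(F_m) = e(L_m) = m$, non-defectiveness of the limit $f$ forces $f(F_m), f(L_m) \to 0$. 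A monotonicity plus $\eps/2$ argument then lifts this pointwise statement to decay uniform in $n$: pick $m_0$ with $f(F_{m_0}) < \eps/2$; the convergence $\homsur{F_{m_0}}{W_n} \to f(F_{m_0})$ handles all but finitely many $n$; for each of the remaining $W_n$ the tail tends to $0$ by dominated convergence (since $\sum_{l} W_n = 1$ a.e.); enlarge $m_0$ to absorb them. The identical argument applied to $L_m$ yields the diagonal conclusion \eqref{W_tight_diag}.

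For (ii), since $\M$ is countable and $\homsur{F}{W_n} \in [0,1]$ throughout, Cantor's diagonal argument extracts a subsequence $(W_{n_k})$ along which $f(F) := \lim_k \homsur{F}{W_{n_k}}$ exists for every $F \in \M$. It remains to verify $f(\infty)=0$. Given $A_1, A_2, \dots \in \A_k$ with $e(A_m) \to \infty$, the elementary estimate $e(A_m) \le \tfrac{k^2}{2} \max_{i,j} A_m(i,j)$ forces at least one matrix entry to diverge; pick $(i_m^*, j_m^*)$ attaining $c_m := \max_{i,j} A_m(i,j) \to \infty$. Every factor in the product defining $t_{\leq}(A_m, W)$ lies in $[0,1]$, so bounding the product by the single factor at $(i_m^*, j_m^*)$ and integrating out the remaining coordinates gives either
\[
\homsur{A_m}{W_n} \le \int_0^1\!\!\int_0^1 \sum_{l=c_m}^\infty W_n(x,y,l)\, \mathrm{d}x\, \mathrm{d}y \quad \text{or} \quad \homsur{A_m}{W_n} \le \int_0^1 \sum_{l=c_m}^\infty W_n(x,x,l)\, \mathrm{d}x,
\]
according as $i_m^* \neq j_m^*$ or $i_m^* = j_m^*$. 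Taking $\sup_n$ on the right and invoking tightness sends the right-hand side to $0$, whence $f(A_m) \le \sup_n \homsur{A_m}{W_n} \to 0$.

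The delicate step is part (i): passing from the pointwise-in-$m$ statement ``$f(F_m) \to 0$'' to a bound uniform in $n$ is \emph{a priori} not automatic. The monotonicity of $m \mapsto \homsur{F_m}{W_n}$ and $m \mapsto \homsur{L_m}{W_n}$ is precisely the Dini-type device that rescues the argument, converting pointwise convergence of a monotone sequence into uniform convergence and allowing the supremum over the infinite family $\{W_n\}$ to be reduced to a maximum over a finite subfamily.
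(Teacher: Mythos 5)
Your argument is correct, and it amounts to unrolling by hand the abstract probability-theoretic machinery that the paper invokes. The paper routes both parts through the exchangeable-array dictionary of Section \ref{section_exchangeability}: for \eqref{convergent->tight} it observes that the two tail integrals in \eqref{W_tight_nondiag}--\eqref{W_tight_diag} are precisely the tail probabilities $\prob{m \leq X_{W_n}(1,2)}$ and $\prob{m \leq X_{W_n}(1,1)}$, notes that convergence of $(W_n)$ forces these $\N_0$-valued random variables to converge in distribution, and then cites Billingsley's theorem that a weakly convergent sequence of laws is tight. Your monotonicity-plus-$\eps/2$ device applied to the nonincreasing functions $m \mapsto t_{\leq}(F_m, W_n)$ and $m \mapsto t_{\leq}(L_m, W_n)$ is exactly what sits inside that cited theorem, applied bare-handed (and your identification $t_{\leq}(F_m,W_n)$, $t_{\leq}(L_m,W_n)$ with the tail integrals, after discarding the unit diagonal factors via \eqref{W_nondefective_eq}, is the same identification the paper makes in \eqref{nondiag_prob_W}--\eqref{diag_prob_W}). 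For \eqref{tight->convergent} the paper proves tightness of the $\A_k$-valued random elements $\left(X_{W_n}(i,j)\right)_{i,j=1}^k$ by a union bound over the $k + \binom{k}{2}$ entries, then applies Prohorov's theorem plus a diagonal extraction; the fact that the Prohorov limit is a genuine probability measure on each $\A_k$ (no mass escapes to infinity) is what delivers non-defectiveness via the multigraphon version of Lemma \ref{lemma_graphkonv_konvindist}. You reach the same endpoint by diagonalizing directly on the countable family $\left\{ t_{\leq}(F,\cdot)\right\}_{F\in\M} \subseteq [0,1]$ and then verifying $f(\infty)=0$ by bounding $t_{\leq}(A_m,W_n)$ by the single factor at the largest matrix entry $c_m$, whose divergence you correctly extract from $e(A_m)\to\infty$ via $e(A_m) \leq \tfrac{k^2}{2}\max_{i,j}A_m(i,j)$. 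Both routes are valid; yours is the more elementary and self-contained one, avoiding any appeal to weak-convergence theory, while the paper's is more economical given that the exchangeable-array translation and Lemma \ref{lemma_graphkonv_konvindist} are already in place.
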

We prove this proposition in Subsection \ref{section_tightness}.

\section{Vertex exchangeable arrays}\label{section_exchangeability}
 We will use $\rndperm$ to denote a uniformly chosen random permutation of $[n]$ and let $\rndperm |_{[k]}$ be the restriction of the function $\rndperm : [n] \rightarrow [n]$ to $[k]$. Thus $\rndperm |_{[k]}$ is a uniformly chosen injective function from $[k]$ to $[n]$.

 Given a multigraph $G$ with adjacency matrix $B \in \A_n$ we define a random array $\randarray{G}{0}{n}$ using the random permutation $\rndperm$ by
\begin{equation}\label{def_finite_rand_adjmatrix}
  X_G^0(i,j) := B\left( \randperm{i}, \randperm{j} \right).
\end{equation}
Thus $\left(X_G^0(i,j)\right)_{i,j=1}^n$ is a random element of $\A_n$ whose distribution only depends on the isomorphism type of $G$.

Now we introduce random infinite labeled multigraphs.
 Recalling \eqref{general_index_set_adjacency} let $\A_{\N}$ denote the set of adjacency matrices $\left(A(i,j)\right)_{i,j=1}^{\infty}$of countable  multigraphs:
 \begin{equation}\label{A_N_def_formula}
 \A_{\N} = \left\{  A\in \N_0^{\N \times \N}\, :\, \forall \, i,j\;  A(i,j)\equiv A(j,i), \, \forall \, i\,\;
   2\, |\,   A(i,i)  \right\}.
 \end{equation}
  We consider the probability space $(\A_{\N}, \mathcal{F}, \probp)$ where $\mathcal{F}$ is the coarsest sigma-algebra with respect to which $A(i,j)$ is measurable for all $i,j$ and $\probp$ is a probability measure on the measurable space $(\A_{\N}, \mathcal{F})$. We are going to denote the infinite random array with distribution $\probp$ by $\rndarray{}{} = \randarray{}{}{\infty}$. We use the standard notation $\mathbf{X}\sim \mathbf{Y}$ if $\mathbf{X}$ and $\mathbf{Y}$ are identically distributed (i.e., their distribution $\probp$ is identical on $(\A_{\N}, \mathcal{F})$).

$ $

Let $\disuniform{1}, \disuniform{2}, \dots$ be i.i.d. uniformly chosen elements of the set $[n]$.
Given a multigraph $G$ with adjacency matrix $B \in \A_n$
 we define an infinite random array $\rndarray{G}{} = \randarray{G}{}{\infty}$ by
\begin{equation}\label{def_infinite_rand_adjmatrix}
  X_G(i,j) := B\left( \disuniform{i}, \disuniform{j} \right).
\end{equation}

The distribution of \rndarray{G}{} is a probability measure $\probp_G$ on the measurable space $\left(\A_{\N}, \mathcal{F}\right)$. Clearly, the distribution of \rndarray{G}{} depends only on the isomorphism class of $G$.

 Now we are in a position to give new probabilistic meaning to the quantities defined in Definition \ref{def_hom_densities}, following \cite{diaconis_janson}. If $F$ is a multigraph with adjacency matrix $A$ indexed by $[k]$, then it is straightforward to check that
\begin{align}
  \hominj{F}{G} &= \prob{\forall i,j \leq k: A(i,j) \leq X_G^0(i,j)}
 \label{hominj_valszamosan} \\
  \hominjind{F}{G} &= \prob{\forall i,j \leq k: A(i,j) = X_G^0(i,j)}
 \label{hominjind_valszamosan} \\
  \homsur{F}{G} &= \prob{\forall i,j \leq k: A(i,j) \leq X_G(i,j)}
   \label{homsur_valszamosan} \\
  \homind{F}{G} &= \prob{\forall i,j \leq k: A(i,j) = X_G(i,j)}
   \label{homind_valszamosan}
\end{align}
For \eqref{hominj_valszamosan} and \eqref{hominjind_valszamosan} we of course need $V(F)\leq V(G)$.

We can also define an infinite random array using a multigraphon $W$.
 \begin{definition}\label{def_X_W}
 Let $U_i$ ($i=1,2,\dots$) be independent random variables uniformly distributed in $[0,1]$. Given $(U_i)_{i=1}^{\infty}$ we define the array $\rndarray{W}{} = \randarray{W}{}{\infty}$ as follows: with probability $W(U_i,U_j,k)$ let $X_W(i,j)=k$.
 \end{definition}

 From this construction and the definition of $W_G$ in \eqref{graphon_generated_byG} it immediately follows that \[\rndarray{W_G}{}\sim \rndarray{G}{}.\]

For every multigraphon $W$ and  multigraph $F$ with adjacency matrix $A$ we have
\begin{align}
  \homsur{F}{W} &= \prob{\forall i,j\leq k: A(i,j)\leq X_W(i,j)} \label{homsur_grafon_valszamosan} \\
  \homind{F}{W} &= \prob{\forall i,j\leq k: A(i,j) = X_W(i,j)}   \label{homind_grafon_valszamosan}
\end{align}


Here we recall the well-known Kolmogorov extension theorem (\cite{lamperti}, Section 1.4, Theorem 1.):

\begin{lemma}\label{theorem_kolmogorov}
If \randarray{n}{}{n} is the adjacency matrix of a random labeled graph $G_{[n]}$ for all $n$, moreover the consistency condition
\begin{equation}\label{kolmogorov_consistency_condition}
  \randarray{n}{}{m} \sim \randarray{m}{}{m}
\end{equation}
holds for all $m\leq n$ (i.e, $G_{[m]}$ has the same distribution as the subgraph of $G_{[n]}$ spanned by the vertices labeled $1, 2, \dots,m$), then there exists a countable random graph (that is a probability measure on $\left( \A_{\N}, \mathcal{F}\right)$) with adjacency matrix $\rndarray{}{}=\randarray{}{}{\infty}$ such that
\begin{equation}\label{extended_cosistency}
  \randarray{}{}{n} \sim \randarray{n}{}{n}.
\end{equation}
Moreover the distribution of \rndarray{}{} is the unique probability distribution on $\left( \A_{\N}, \mathcal{F}\right)$ for which \eqref{extended_cosistency} holds for all $n$.
\end{lemma}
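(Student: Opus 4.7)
The plan is to construct the probability measure $\probp$ on the algebra $\mathcal{A}$ of finite-dimensional cylinder subsets of $\A_\N$ and then extend it to $\mathcal{F}$ via Carathéodory's theorem. First I would let $\Pi_n : \A_\N \to \A_n$ denote the projection $A \mapsto (A(i,j))_{i,j=1}^n$ and set
\[
\mathcal{A} := \{\Pi_n^{-1}(B) : n \in \N,\ B \subseteq \A_n\},
\]
where, since $\A_n$ is countable, every subset is automatically measurable. Because for $m\leq n$ every cylinder at level $m$ can be rewritten at level $n$ via $\Pi_m = \Pi_{n,m}\circ \Pi_n$ (with $\Pi_{n,m}$ the obvious restriction map), $\mathcal{A}$ is an algebra, and by definition it generates $\mathcal{F}$.

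For $C=\Pi_n^{-1}(B)\in \mathcal{A}$ define $\probp(C):= \probp_n(B)$, where $\probp_n$ is the distribution of $\randarray{n}{}{n}$. The consistency hypothesis \eqref{kolmogorov_consistency_condition}, which says precisely that $\probp_n(\Pi_{n,m}^{-1}(B')) = \probp_m(B')$, is exactly what is needed to make $\probp$ independent of the chosen representation of $C$, and finite additivity of $\probp$ on $\mathcal{A}$ then follows from that of each $\probp_n$.

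The main obstacle is countable additivity on $\mathcal{A}$, which I would prove in the equivalent continuity form: if $C_k \downarrow \emptyset$ with $C_k\in \mathcal{A}$, then $\probp(C_k)\to 0$. Suppose not, so that $\probp(C_k)\geq \delta > 0$ for all $k$. After passing to a common refinement of levels I may assume $C_k = \Pi_k^{-1}(B_k)$ with $B_k\subseteq \A_k$ and $\Pi_{k,k-1}(B_k)\subseteq B_{k-1}$. The countability of each $\A_k$ is decisive: for any $\epsilon>0$ one can choose a finite $F_k\subseteq B_k$ with $\probp_k(B_k\setminus F_k) < \epsilon\, 2^{-k}$, so that the cylinders $\Pi_k^{-1}(F_k)$ still carry $\probp$-mass at least $\delta-\epsilon$. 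A König-lemma / diagonal argument on the finitely-branching tree whose level-$k$ vertices are the elements of $F_k$ (the parent map being $\Pi_{k,k-1}$, compatibility between levels being ensured by consistency) then produces an infinite compatible path $(A^{(k)})_{k\geq 1}$, which glues to an element of $\bigcap_k C_k$, contradicting $C_k\downarrow \emptyset$.

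Given countable additivity, Carathéodory's extension theorem delivers a probability measure $\probp$ on $\mathcal{F}$ satisfying \eqref{extended_cosistency}. Uniqueness is immediate from the Dynkin $\pi$–$\lambda$ theorem, since $\mathcal{A}$ is a $\pi$-system generating $\mathcal{F}$ and two probability measures agreeing on such a $\pi$-system agree on the generated $\sigma$-algebra. I expect the countable additivity step to be the only real technical point; the fact that each $\A_k$ is a countable discrete space reduces it to a clean combinatorial König's-lemma argument, bypassing the inner-regularity/Polish-space machinery that the general version of the theorem requires.
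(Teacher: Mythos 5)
The paper does not actually prove this lemma; it merely cites the Kolmogorov extension theorem (\cite{lamperti}, Section~1.4, Theorem~1). So you are supplying a full proof where the authors supply only a reference. Your route is the standard one for discrete state spaces --- Carath\'eodory extension from the cylinder algebra, continuity at $\emptyset$, uniqueness via $\pi$--$\lambda$ --- and your closing remark, that countability of each $\A_k$ lets a K\"onig's-lemma argument stand in for the inner-regularity/compactness machinery needed in the Polish case, is exactly the right observation about why this special case is elementary.

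There is, however, one step that would fail as written. You describe the K\"onig's-lemma tree as having level-$k$ vertices $F_k$ with parent map $\Pi_{k,k-1}$, adding that ``compatibility between levels [is] ensured by consistency.'' That last clause is incorrect: consistency \eqref{kolmogorov_consistency_condition} is a statement about the \emph{measures}, not about your freely chosen finite sets $F_k\subseteq B_k$, and in general $\Pi_{k,k-1}(F_k)\not\subseteq F_{k-1}$, so the proposed tree is not well-defined. The repair is easy: set $G_k:=\bigcap_{j\leq k}\Pi_{k,j}^{-1}(F_j)\cap B_k\subseteq F_k$, which is finite, satisfies $\Pi_{k,k-1}(G_k)\subseteq G_{k-1}$ by construction, and has $\Pi_k^{-1}(G_k)=\bigcap_{j\leq k}\Pi_j^{-1}(F_j)\cap C_k$, so that
\[
\probp\bigl(\Pi_k^{-1}(G_k)\bigr)\;\geq\;\probp(C_k)-\sum_{j\leq k}\eps\,2^{-j}\;\geq\;\delta-\eps\;>\;0
\]
for $\eps<\delta$; in particular $G_k\neq\emptyset$. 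The tree with levels $G_k$ is genuinely finitely branching with nonempty levels, K\"onig's lemma yields a compatible path $(A^{(k)})_{k\geq 1}$, and its union is a symmetric, even-diagonal array in $\A_{\N}$ lying in every $C_k$, contradicting $C_k\downarrow\emptyset$. With that correction the argument is complete; the Carath\'eodory extension and the $\pi$--$\lambda$ uniqueness steps you outline are routine.
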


$ $

\begin{proof}[Proof of Proposition \ref{uniqueness}]

 Assume given  $\abs{V(G)}=n$ and
 $\homsur{F}{G}$ for all $F \in \M$.
  We want to prove that this information uniquely
determines the isomorphism type $G$.
By \eqref{hom_mobius_G} we  may assume given
$\left( \homind{F}{G} \right)_{F \in \M}$.
By \eqref{homind_valszamosan} we know the distribution of
 $\left(X_G(i,j)\right)_{i,j=1}^k$ for all $k$.
By Lemma \ref{theorem_kolmogorov} we may assume given $\mathbf{X}_G$.

 Denote by $B \in \A_n$ the adjacency
matrix of a labeling of $G$. Define an equivalence relation $\simeq$ on $[n]$
by
\[ i \simeq j \quad \iff \quad \forall \, k \in [n]\; \; B(i,k)=B(j,k). \]

 Let $\mathcal{V}$ denote the set of $\simeq$-equivalence classes.

Define $B_{\simeq} \in \A_{\mathcal{V}}$
(see \eqref{general_index_set_adjacency})
by $B_{\simeq}(I,J)=B(i,j)$ where $I,J \in \mathcal{V}$ and $i \in I$, $j \in J$.

For $I \in \mathcal{V}$ let $P_{\simeq}(I):=\frac{\abs{I}}{n}$.

The isomorphism type $G$ can be recovered given $B_{\simeq}$,
$P_{\simeq}$
 and $n$.

  Now we show that $B_{\simeq}$ and
$P_{\simeq}$ can be recovered given $\mathbf{X}_G$.

Define a (random) equivalence relation $\cong$ on $\N$ by
\[ i \cong j \quad \iff \quad \forall \, k \in \N \; \; X_G(i,k)=X_G(j,k). \]

Let $\tilde{\mathcal{V}}$ denote the set of $\cong$-equivalence classes.

Define $B_{\cong} \in \A_{\tilde{\mathcal{V}}}$
by $B_{\cong}(\tilde{I},\tilde{J})=B(i,j)$ where $\tilde{I},\tilde{J} \in \tilde{\mathcal{V}}$ and $i \in \tilde{I}$, $j \in \tilde{J}$.

Recalling \eqref{def_infinite_rand_adjmatrix} it is easy to see that
\[ \prob{i \cong j \iff   \disuniform{i} \simeq
  \disuniform{j}}=1 \quad
\text{ and }\quad  \prob{\abs{\mathcal{V}}=\abs{\tilde{\mathcal{V}}}}=1,\] since almost surely every element of $[n]$ will appear as the value of $\disuniform{i}$
for some $i \in \N$. If we define $\tilde{I}:=\{ i \in \N: \disuniform{i} \in I\}$, i.e. we
 label the elements of $\tilde{\mathcal{V}}$ using the corresponding elements of $\mathcal{V}$
 then we have $ B_{\simeq}(I,J)=B_{\cong}(\tilde{I}, \tilde{J})$ for
all $I,J \in \mathcal{V}$  by this definition and
\[ \prob{P_{\simeq}(I)=\lim_{N \to \infty} \frac{1}{N} \sum_{i=1}^N \ind[ i \in \tilde{I}]}=1 \] by the law of large numbers.
\end{proof}

$ $

\begin{definition}
A random array $\rndarray{}{} = \randarray{}{}{\infty}$ is \emph{vertex exchangeable} if
\begin{equation}\label{vertex_exch}
  \left(X(\rndinfperm(i),\rndinfperm(j))\right)_{i,j=1}^{\infty} \sim \randarray{}{}{\infty}
\end{equation}
for all finitely supported permutations $\rndinfperm : \N\rightarrow \N$.
\end{definition}
We make the assumption of finite support only because working with all permutations
introduces the additional technicalities of working with an uncountable group; however, with
the right conventions these are routinely surmountable, and the resulting theory is easily seen
to be equivalent. In fact it directly follows from the above definition and the uniqueness assertion of Kolmogorov's extension theorem (Lemma
\ref{theorem_kolmogorov}) that \eqref{vertex_exch} holds for any explicitly described permutation $\rndinfperm : \N\rightarrow \N$ with an infinite support.

The fact that \rndarray{G}{} is vertex exchangeable easily follows from
 $\left( \disuniform{i} \right)_{i=1}^{\infty} \sim \left( \disuniform{\tau(i)} \right)_{i=1}^{\infty}$.
Similarly,   $\left( U_{i} \right)_{i=1}^{\infty} \sim \left( U_{\tau(i)} \right)_{i=1}^{\infty}$ implies that
 \rndarray{W}{} also satisfies \eqref{vertex_exch}.

By the uniqueness part of Lemma \ref{theorem_kolmogorov} the property \eqref{vertex_exch} is equivalent to
\begin{equation}\label{vertex_exch_equiv}
  \prob{\forall i,j\leq n: X(i,j)=A(i,j)} = \prob{\forall i,j\leq n: X(\rndinfperm(i),\rndinfperm(j))=A(i,j)}
\end{equation}
for all $n\in \N$,  $A\in \A_n$ and  $\rndinfperm$ such that $n'\geq n \implies \rndinfperm(n')=n'$.

Vertex exchangeability has several different names: in \cite{aldous_exch} \rndarray{}{} is called \emph{weakly exchangeable}, in \cite{diaconis_janson} the term \emph{jointly exchangeable} is used, we call \rndarray{}{} vertex exchangeable because the distribution of a countable random graph with adjacency matrix \rndarray{}{} is invariant under any relabeling of the vertices. In Section 2.4 of \cite{Lovasz_Szegedy_2009} the distribution of a vertex exchangeable countable random graph is referred to as \emph{consistent} and \emph{invariant}.

We extend \randarray{G}{0}{n} defined by \eqref{def_finite_rand_adjmatrix} into  $\rndarray{G}{0}=\randarray{G}{0}{\infty}$ by  defining $X_G^0(i,j)=0$ if $i>n$ or $j>n$. Note that the extended \rndarray{G}{0} satisfies \eqref{vertex_exch} for permutations $\rndinfperm : \N \rightarrow \N$ for which $\rndinfperm(n')=n'$ if $n'\geq n$.


\begin{definition}\label{def_dissociated}
Call an infinite array $\rndarray{}{} = \randarray{}{}{\infty}$ \emph{dissociated} if for all $n$: $\randarray{}{}{n}$ is independent of $\left( X(i,j)\right)_{i,j=n+1}^{m}$ for each $m>n$.
\end{definition}

It is easy to see that \rndarray{G}{} and \rndarray{W}{}  are dissociated. We have taken the terminology dissociated over from \cite{aldous_exch}, in \cite{Lovasz_Szegedy_2009} it is referred to as the \emph{local} property of the distribution of the random graph: the distribution of subgraphs spanned by disjoint vertex sets are independent.

\subsection{Convergence of random arrays}

We say that a sequence of infinite  arrays \randarray{n}{}{\infty} \emph{converges in distribution} if
\begin{align}
\label{eq_eobankonv} & \forall\, k\; \forall \, A \in \A_k \;  \lim\limits_{n\to \infty} \prob{\forall i,j\leq k: A(i,j)=X_n(i,j)}=g(A),\\
\label{nemdefektiv} &\forall \, k \; \sum_{A \in \A_k} g(A)=1
\end{align}
for some $g: \bigcup_{k=1}^{\infty} \A_k \to \R_{+}$.

Alternatively we might say that \randarray{n}{}{\infty} converges in distribution if and only if
\randarray{n}{}{k} converges in distribution to some random element
 of $\A_k$ for all $k$.

 By Lemma \ref{theorem_kolmogorov} there exists a random infinite array $\rndarray{}{}=\randarray{}{}{\infty}$ such that for all $k$ and $A\in\A_k$
\begin{equation}\label{eobankonv_detailed}
  \prob{\forall i,j\leq k: A(i,j)=X_n(i,j)} \ntoinf \prob{\forall i,j\leq k: A(i,j)=X(i,j)}
\end{equation}
In this case we say that $\rndarray{n}{} \,\toindis \rndarray{}{}$. If $\rndarray{n}{}$ is exchangeable for all $n$, then $\rndarray{}{}$ is also exchangeable. If $\rndarray{n}{}$ is dissociated for all $n$, then $\rndarray{}{}$ is also dissociated.

\begin{lemma}\label{lemma_graphkonv_konvindist}
For any multigraph sequence $(G_n)_{n=1}^{\infty}$ and any multigraph parameter $f$ it is equivalent that
\begin{enumerate}[(a)]
  \item \label{a11} $(G_n)_{n=1}^{\infty}$ converges according to Definition \ref{def_graf_konvergencia} and
\begin{equation}\label{kispal}
\forall\, F \in \M \;\;   \lim\limits_{n\to\infty} \homsur{F}{G_n} = f(F).
 \end{equation}
  \item \label{b11} The sequence of infinite random arrays $(\rndarray{G_n}{})_{n=1}^{\infty}$ converges in distribution and
      \begin{equation}\label{f_leq_valszamosan}
      \forall \, k \; \forall \, A \in \A_k \; \; \lim_{n\to\infty} \prob{\forall i,j\leq k: A(i,j) \leq X_{G_n}(i,j)} = f(A).
      \end{equation}
\end{enumerate}
\end{lemma}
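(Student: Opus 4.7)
The identifications \eqref{homsur_valszamosan} and \eqref{homind_valszamosan} show that $t_\leq(A,G_n)$ and $t_=(A,G_n)$ are precisely the ``upper-tail'' and ``point-mass'' probabilities of the array $\rndarray{G_n}{}$ at $A$, so the equations involving $f$ in \eqref{kispal} and \eqref{f_leq_valszamosan} already express the same fact. The genuine content of the lemma is to match the ``convergence'' part of (a) --- which by Definition \ref{def_graf_konvergencia} silently includes non-defectiveness of $f$ --- with the ``convergence in distribution'' part of (b), which via \eqref{nemdefektiv} silently includes the total-mass-one condition $\sum_{A \in \A_k} g(A)=1$. Both implications go through the M\"obius identity $t_=\equiv t_{\leq}^{\dagger}$ from \eqref{hom_mobius_G} plus a tightness argument; I will handle the two directions separately.

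For (a)$\Rightarrow$(b), my plan is to write $t_=(A,G_n)$ as the \emph{finite} alternating sum $\sum_{E\in\E_k}(-1)^{e(E)} t_{\leq}(A+E,G_n)$; passing to the limit term by term yields $g(A):=\lim_n\prob{\forall i,j\leq k: X_{G_n}(i,j)=A(i,j)} = f^{\dagger}(A)$, which gives \eqref{eq_eobankonv}. To verify \eqref{nemdefektiv}, note that $\sum_{A\in\A_k} g(A) = \sum_{A\in\A_k} f^{\dagger}(A) = (f^{\dagger})^{-\dagger}(\zero_k)$ (because the condition $A\geq\zero_k$ is automatic in the definition of the inverse M\"obius transform), which by Lemma \ref{mobius_inverzio_tetel} equals $f(\zero_k)$ --- the hypothesis $f(\infty)=0$ needed to apply the lemma is precisely the non-defectiveness built into (a). Finally $f(\zero_k)=\lim_n\homsur{\zero_k}{G_n}=1$ because every map $[k]\to[n]$ trivially satisfies $\graphhom{\zero_k}{B}=1$.

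For (b)$\Rightarrow$(a), the only non-trivial task is to show $f$ is non-defective. I would first establish the representation $f(A) = \sum_{A'\in\A_k,\, A'\geq A} g(A')$ by rewriting the upper-tail event as the disjoint union over point-mass events $\{X_{G_n}=A'\text{ on }[k]\times[k]\}$ with $A'\geq A$, and then interchanging $\lim_n$ with the infinite sum. This interchange is the one place where work is needed: I truncate to the finite set $S_M=\{A'\geq A:\max_{i,j}A'(i,j)\leq M\}$, pass to the limit in $n$ for fixed $M$, and then send $M\to\infty$. The tail $\prob{\forall i,j\leq k: A(i,j)\leq X_{G_n}(i,j),\,\max_{i,j}X_{G_n}(i,j)>M}$ is controlled uniformly in $n$ by $k^2\sup_n\prob{X_{G_n}(i,j)>M}$, and tightness of these marginals is itself a consequence of \eqref{nemdefektiv}: for each fixed $M$ the convergence $\prob{X_{G_n}(i,j)>M}\to\sum_{A':A'(i,j)>M}g(A')$ lets one choose $M$ making the tail uniformly small, taking care of the finitely many initial indices separately. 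Once the representation is in hand, non-defectiveness is a one-line dominated-convergence argument: for any $A_n\in\A_k$ with $e(A_n)\to\infty$ and any fixed $A'\in\A_k$ one eventually has $A'\not\geq A_n$, so $g(A')\,\ind[A'\geq A_n]\to 0$ pointwise while being dominated by the summable weight $g(A')$, giving $f(A_n)=\sum_{A'}g(A')\,\ind[A'\geq A_n]\to 0$.

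The main obstacle is exactly the tightness-driven exchange of $\lim_n$ and the infinite sum over $A'\geq A$ in the second direction; the M\"obius algebra in the first direction is a finite computation and everything else is bookkeeping through \eqref{homsur_valszamosan}--\eqref{homind_valszamosan}.
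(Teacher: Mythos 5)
Your proposal is correct and follows essentially the same route as the paper's: the (a)$\Rightarrow$(b) direction via the finite M\"obius sum and the identity $\sum_{A\in\A_k} f^{\dagger}(A) = (f^{\dagger})^{-\dagger}(\zero_k) = f(\zero_k)=1$, and the (b)$\Rightarrow$(a) direction via the limit array and a dominated-convergence/a.s.-finiteness argument for non-defectiveness. The one place you go into more detail than the paper is the interchange of $\lim_n$ with the infinite sum over $A' \geq A$: the paper simply asserts $f(A) = \prob{\forall i,j\leq k: A(i,j)\leq X(i,j)}$ for the limit array $\mathbf{X}$ without comment (implicitly invoking the Scheff\'e-type fact that pointwise convergence of a countable family of probability masses with conserved total mass gives convergence of all event probabilities), whereas you supply an explicit truncation-and-tightness argument; your subsequent dominated-convergence step for $f(A_n)\to 0$ is the same as the paper's observation that $\{A_n \leq \mathbf{X}|_k\} \subseteq \{e(A_n) \leq e(\mathbf{X}|_k)\}$ with $e(\mathbf{X}|_k)$ a.s.\ finite.
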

\begin{proof}
$ $

$\eqref{a11} \implies \eqref{b11}$: If $\lim_{n \to \infty} t_{\leq}(\cdot,G_n)=f(\cdot)$ then by \eqref{mobius_trans} and \eqref{W_densities_mobius} we get
\begin{equation}\label{csacsacsa}
\lim\limits_{n\rightarrow \infty}\homind{\cdot}{G_n} = f^{\dagger}(\cdot).
\end{equation}
 By
 $f(\infty)=0$ (which is assumed in Definition \ref{def_graf_konvergencia}) we obtain
 \[\sum_{A \in \A_k} f^{\dagger}(A) =  \left(f^{\dagger}\right)^{-\dagger}(\zero_k) \stackrel{\eqref{mobius_inverzios_tetel_eq}}{=} f(\zero_k)= \lim_{n \to \infty} t_{\leq}(\zero_k,G_n)= 1.\]
 Using \eqref{homind_valszamosan} and \eqref{csacsacsa} we get that $\mathbf{X}_n:=\mathbf{X}_{G_n}$ and $g:=f^{\dagger}$ satisfy \eqref{eq_eobankonv} and \eqref{nemdefektiv},  thus $(\rndarray{G_n}{})_{n=1}^{\infty}$ converges in distribution.
\eqref{f_leq_valszamosan} follows from \eqref{homsur_valszamosan} and \eqref{kispal}.

$ $

$\eqref{b11} \implies \eqref{a11}$: $\lim\limits_{n\to\infty} \homsur{F}{G_n} = f(F)$ follows from
\eqref{homsur_valszamosan} and \eqref{f_leq_valszamosan}.

Let $\rndarray{G_n}{} \,\toindis \rndarray{}{}$ where
 $\prob{\forall i,j\leq k: A(i,j) \leq X(i,j)} = f(A)$.
 In order to prove $f(\infty)=0$ let $k \in \N$ and $A_1, A_2, \dots \in \A_k$ such that $e(A_n) \to +\infty$.
Now $\left( X(i,j)\right)_{i,j=1}^k$ is a random element of $\A_k$ thus
the random variable $e \left( \left(X(i,j)\right)_{i,j=1}^k \right)$ is almost surely finite. It is easy to see that
\[ \left\{ \forall i,j\leq k: A_n(i,j) \leq X(i,j) \right\} \subseteq
 \left\{ e(A_n) \leq e \left( \left(X(i,j)\right)_{i,j=1}^k \right) \right\}.
 \]
 Since the probability of the r.h.s. goes to $0$ as $n \to \infty$, we obtain that $f$ is non-defective.
\end{proof}

$ $

Now we prove
\begin{equation}
 \abs{V(G_n)}\to\infty \quad \implies \quad
 \left( \rndarray{G_n}{0} \toindis \rndarray{}{}\; \iff \; \rndarray{G_n}{} \toindis \rndarray{}{} \right)
 \end{equation}
 By  \eqref{hominjind_valszamosan}, \eqref{homind_valszamosan} and \eqref{eq_eobankonv}  we only need to show that
 \begin{equation} \label{homdensities_converge_together}
\abs{V(G_n)}\to\infty \quad \implies \quad \forall \, F \in \M\;\; \lim_{n \to \infty} \abs{\homind{F}{G_n} - \hominjind{F}{G_n}} = 0.
 \end{equation}
  We show that if $V(F)=k$ and $V(G)=n$ then
 \[ \abs{\homind{F}{G} - \hominjind{F}{G}}\leq \frac{1}{n} \binom{k}{2} . \]
We might assume $k<n$.  Recalling the formulae \eqref{def_finite_rand_adjmatrix} and \eqref{def_infinite_rand_adjmatrix} one can see that
 $\left( X_{G}^0(i,j)\right)_{i,j=1}^{k}$ has the same distribution as $\left( X_{G}(i,j)\right)_{i,j=1}^{k}$ under the condition
 \[\abs{ \left\{ \disuniform{1}, \dots, \disuniform{k} \right\} }=k.\]
For any two events $A$ and $B$ in any probability space
\begin{equation}\label{egyenlotlensegunk}
  \abs{\prob{A}-\condprob{A}{B}}\leq 1-\prob{B}.
\end{equation}
Using this inequality we get the desired
\begin{eqnarray*}
  &&\abs{\homind{F}{G} - \hominjind{F}{G}}\leq \prob{ \abs{ \left\{ \disuniform{1}, \dots, \disuniform{k} \right\} }<k  } = \\
  && \prob{\bigcup_{i<j\leq k} \disuniform{i}=\disuniform{j}} \leq \sum\limits_{i<j\leq k} \prob{\disuniform{i}=\disuniform{j}} = \frac{1}{n} \binom{k}{2} .
\end{eqnarray*}

\section{A representation theorem for vertex exchangeable arrays}\label{section_aldous_theorem}
In this chapter we consider vertex exchangeable random elements of $\A_{\N}$.
The proofs work without any change for real-valued, symmetric, vertex exchangeable arrays, which correspond to adjacency
matrices of undirected, infinite, vertex exchangeable \emph{weighted} graphs.

\begin{theorem}\label{theorem_aldous}
Let $\alpha, \left(U_i \right)_{1 \leq i},  \left(\beta_{i,j} \right)_{1 \leq i \leq j}$ be i.i.d. random variables distributed uniformly in $[0,1]$.

\begin{enumerate}[(i)]
\item \label{theorem_aldous_negyvaltozos}
 Given a  vertex exchangeable array \rndarray{}{} (that is: a vertex exchangeable random element of $\A_{\N}$), there exists a measurable function
  $f: [0,
1]^4 \to \N_0$ such that
\[f(a,u_1,u_2,b) \equiv f(a,u_2,u_1,b)\]
and
if we define the random array $\tilde{\mathbf{X}}$ by
\begin{equation}\label{representation_formula_f}
\tilde{X}(i,j):= f(\alpha, U_i, U_j, \beta_{\min(i,j) ,\max(i,j)})
\end{equation}
then we have $\tilde{\mathbf{X}} \sim \mathbf{X}$.

\item \label{theorem_aldous_haromvaltozos}
Moreover, if \rndarray{}{} is also dissociated then there exists a measurable function $g:[0,1]^3 \to \N_0$
  such that $g(u_1,u_2,b)\equiv g(u_2,u_1,b)$ and
    \begin{equation}\label{representof_exch_dissoc}
      \randarray{}{}{\infty} \sim \left( g(U_i, U_j, \beta_{\min(i,j) ,\max(i,j)} )\right)_{i,j=1}^{\infty}.
     \end{equation}
\end{enumerate}
\end{theorem}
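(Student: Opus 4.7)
The plan is to extract from $\mathbf{X}$ three nested layers of randomness --- a global layer, a vertex-wise layer, and an edge-wise noise layer --- via suitable sub-$\sigma$-algebras of $\mathcal{F}$, and then use standard Borel representation to encode each layer by independent uniform random variables, thereby reconstructing $\mathbf{X}$ in the form demanded by the theorem.

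First I would introduce the shift-invariant $\sigma$-algebra
\[
\mathcal{S} := \bigcap_{n \geq 1} \mathcal{T}_n, \qquad \mathcal{T}_n := \sigma\{X(i,j) : \min(i,j) \geq n\},
\]
and observe that vertex exchangeability (applied to permutations swapping an initial segment of $\N$ with an arbitrarily distant block) forces $\mathcal{S}$ to coincide with the $\sigma$-algebra of events invariant under every finitely supported permutation of $\N$. Next, for each $i \in \N$, I would define the row $\sigma$-algebra
\[
\mathcal{U}_i := \bigcap_{N \geq 1} \sigma\bigl(\mathcal{S}, \, \{X(i,j) : j \geq N\}\bigr),
\]
which encodes the ``type'' of vertex $i$ once the global data $\mathcal{S}$ is fixed. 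Conditional on $\mathcal{S}$ the sequence $(X(i,j))_{j \neq i}$ is exchangeable in $j$ for every fixed $i$, and a row-by-row de Finetti-type argument then implies that the $\mathcal{U}_i$ are conditionally i.i.d.\ given $\mathcal{S}$.

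The heart of the argument is the following conditional independence lemma, which I would establish by a reverse martingale computation: given $\mathcal{S} \vee \bigvee_i \mathcal{U}_i$, the variables $(X(i,j))_{i \leq j}$ are mutually independent, and the conditional distribution of $X(i,j)$ depends only on $(\mathcal{S}, \mathcal{U}_i, \mathcal{U}_j)$. To prove it, fix a pair $i \leq j$ and enlarge the conditioning $\sigma$-algebra by $\sigma\{X(i',j') : \min(i',j') \geq N, \; \{i',j'\} \neq \{i,j\}\}$; the enlargement is invariant under permutations of $\N$ fixing $\{i,j\}$, so as $N \to \infty$ vertex exchangeability collapses the conditional law of $X(i,j)$ onto $\sigma(\mathcal{S}, \mathcal{U}_i, \mathcal{U}_j)$. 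Mutual independence across distinct edges follows by running the same argument jointly on finitely many pairs.

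Since $(\A_{\N}, \mathcal{F}, \probp)$ is standard Borel, I may pick a uniform $\alpha \in [0,1]$ generating $\mathcal{S}$ and, exploiting the conditional i.i.d.\ structure of the $\mathcal{U}_i$ given $\mathcal{S}$, choose i.i.d.\ uniforms $U_i$ independent of $\alpha$ with $\mathcal{U}_i = \sigma(\alpha, U_i)$ modulo $\probp$-null sets. The conditional independence lemma together with a disintegration argument then produces a measurable $f : [0,1]^4 \to \N_0$ and i.i.d.\ uniforms $(\beta_{ij})_{i \leq j}$, independent of $(\alpha, (U_i))$, such that $\mathbf{X} \sim \bigl(f(\alpha, U_i, U_j, \beta_{\min(i,j), \max(i,j)})\bigr)_{i,j}$; the required symmetry of $f$ in its two middle arguments follows from $X(i,j) = X(j,i)$ after replacing $f$ by a symmetrized version. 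For part (ii), the dissociated hypothesis says that $\mathcal{T}_{n+1}$ is independent of $\sigma\{X(i,j) : i,j \leq n\}$ for every $n$; letting $n \to \infty$ makes $\mathcal{S}$ independent of all of $\mathcal{F}$ and hence $\probp$-trivial, so one may take $g(u_1, u_2, b) := f(\alpha_0, u_1, u_2, b)$ for any fixed $\alpha_0 \in [0,1]$. The main obstacle I anticipate is the conditional independence lemma --- precisely the point left unproved in \cite{aldous_exch} --- which requires careful reverse martingale manipulations exploiting tail exchangeability.
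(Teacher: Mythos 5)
Your strategy is correct and matches the paper's decomposition closely: both proofs separate the randomness into a global layer, a vertex-type layer, and an edge-noise layer; both rest on a conditional-independence lemma for the entries given the first two layers; both finish by Borel coding. The real difference is the technical scaffolding. The paper follows Aldous' device of adjoining $\Nn = \{-1,-2,\dots\}$ to the index set and conditioning on the concrete subarrays $X_{\Nn,\Nn}$ (global), $X_{i,\Nn}$ (row types), and $X_{\Nn,\Nn\N}$ (all data touching the tail), so that Lemmas \ref{rows_cond_indep_lemma}--\ref{cond_indep_fapipa} can be proved by swapping a finite positive block with a finite negative block, using exchangeability, plus a single reverse-martingale step (the tail-exchangeability identity \eqref{tail_prop}). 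You instead work with the intrinsic tail $\sigma$-algebra $\mathcal{S}$ and row-tail $\sigma$-algebras $\mathcal{U}_i$, putting the reverse-martingale convergence \eqref{upward}--\eqref{downward} at the centre of every step; this is closer to the treatments of Kallenberg or Austin and is a legitimate alternative. What the $\Nn$ device buys is that the conditioning objects are honest random variables rather than intersections of $\sigma$-algebras, which makes the coding step via Lemma \ref{codinglemma} immediate and keeps the conditional-independence proofs finitary.

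Two small defects in the sketch are worth flagging. First, in the argument for the key lemma you enlarge the conditioning by $\sigma\{X(i',j') : \min(i',j') \geq N, \{i',j'\} \neq \{i,j\}\}$ and claim the conditional law ``collapses onto $\sigma(\mathcal{S},\mathcal{U}_i,\mathcal{U}_j)$'' as $N\to\infty$; but that $\sigma$-algebra decreases to $\mathcal{S}$ alone and does not encode the row data of $i$ or $j$, so you must also retain the row tails $\sigma\{X(i,k): k\geq N\}$ and $\sigma\{X(j,k): k\geq N\}$ in the approximating family --- this is exactly what the paper's conditioning on $X_{\Nn,\Nn\rbN}$ in Lemma \ref{cond_indep_fapipa} accomplishes. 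You rightly identify this lemma as the crux, so this is a gap in the sketch rather than a wrong approach. Second, in part (ii), ``any fixed $\alpha_0$'' should be ``almost every $\alpha_0$'' (a Fubini argument is needed after showing $\tilde{\mathbf{X}}$ is independent of $\alpha$), as the paper does.
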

Our aim is to give an accessible and self-contained proof.
 Proofs of different versions of this theorem can be found in the literature:

 In Theorem $1.4$ of \cite{aldous_exch} a proof of the analogue of \eqref{theorem_aldous_negyvaltozos} is given for row and column exchangeable (RCE) arrays (different permutations can be applied to the rows and columns).
  The variant of \eqref{theorem_aldous_negyvaltozos} we prove (where $\mathbf{X}$ is not RCE, only vertex exchangeable)  is only stated in Theorem 5.1 of \cite{aldous_exch}. Our proof of
 \eqref{theorem_aldous_negyvaltozos} follows the structure of the proof of Theorem $1.4$ of \cite{aldous_exch} but our proofs of Lemmas \ref{rows_cond_indep_lemma}, \ref{edges_indep_lemma} and \ref{cond_indep_fapipa} use methods from \cite{austin_survey}.

   \eqref{theorem_aldous_haromvaltozos} is only stated and proved for RCE arrays in \cite{aldous_exch}, but the proof works in the vertex exchangeable case as well.
   The proof of a more general version of Theorem \ref{theorem_aldous} is Chapter 7 of \cite{kallenberg}.
    Variants of  \eqref{theorem_aldous_negyvaltozos} and \eqref{theorem_aldous_haromvaltozos} for random infinite exchangeable arrays corresponding to simple graphs are proved in Theorems 3.1 and 3.2 in \cite{Lovasz_Szegedy_2009}.

\subsection{Preliminaries}
In this subsection we state some less known facts of probability theory needed for the proof of Theorem \ref{theorem_aldous}.

Assume given a probability space $\left( \Omega, \mathcal{F}, \probp \right)$ and sub-$\sigma$-algebra $\mathcal{G} \subseteq \cF$.
We denote the set of $\mathcal{G}$-measurable functions by $\mathbf{m}\mathcal{G}$.
Let $Y \in L^1\left( \Omega, \mathcal{F}, \probp \right) $ denote a real-valued random variable on $\Omega$ with finite expectation.
The definition and  basic properties of $\condexpect{Y}{\mathcal{G}}$, the conditional expectation of
$Y $  with respect to $\mathcal{G}$ are
 given in Chapter 9 of \cite{williams}. The defining property of $\condexpect{Y}{\mathcal{G}}$ is
 \begin{equation}\label{def_prop_of_cond_expectation}
 Y' \in \mathbf{m} \cG \; \text{ and } \; \forall Z \in L^{\infty}(\Omega,\cG) \;\, \expect{Y' Z}=\expect{Y Z}
 \;\; \iff \;\; \prob{Y'=\condexpect{Y}{\mathcal{G}}}=1
 \end{equation}

  We are going to use \emph{Steiner's theorem for conditional expectations}:
 \begin{equation}\label{steiner}
 Z \in \mathbf{m}\mathcal{G} \;\; \implies \;\;
 \expect{ \left(Y-Z\right)^2}=
 \expect{ \left( Y-\condexpect{ Y}{\mathcal{G}}\right)^2}  +
 \expect{ \left( \condexpect{Y}{\mathcal{G}}-Z\right)^2}
 \end{equation}

Let $\left( \mathcal{F}_v \right)_{v \in V}$ be a finite or countably infinite family of sub-$\sigma$-algebras of $\mathcal{F}$.
 We say that $\left( \mathcal{F}_v \right)_{v \in V}$ are \emph{conditionally independent} given $\mathcal{G}$ if for all $n \in \N$,
 $v_1,\dots, v_n \in V$, all $i \in [n]$, $Y_i \in L^{\infty}(\Omega,\mathcal{F}_{v_i} )$ we have
 \begin{equation}\label{defining_eq_of_cond_independence}
  \condexpect{ \prod_{i=1}^n  Y_{i} }{\mathcal{G}}=\prod_{i=1}^n \condexpect{   Y_{i} }{\mathcal{G}}
  \end{equation}
 \begin{lemma}\label{lemma_vegesites}
 If $\cF^1_v \subseteq \cF^2_v \subseteq \dots$ and $\cF_v=\sigma \left( \bigcup_{n=1}^{\infty} \cF^n_v \right)$ for all $v \in V$, then
 $\left( \mathcal{F}_v \right)_{v \in V}$ are conditionally independent given $\mathcal{G}$ if and only if
 $\left( \mathcal{F}^n_v \right)_{v \in V}$ are conditionally independent given $\mathcal{G}$ for all $n \in \N$.
 \end{lemma}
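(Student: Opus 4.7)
The forward direction, that conditional independence of $(\mathcal{F}_v)_{v \in V}$ given $\mathcal{G}$ forces conditional independence of $(\mathcal{F}^n_v)_{v \in V}$ given $\mathcal{G}$ for every $n$, is immediate from $\mathcal{F}^n_v \subseteq \mathcal{F}_v$: any $Y_i \in L^{\infty}(\Omega, \mathcal{F}^n_{v_i})$ also lies in $L^{\infty}(\Omega, \mathcal{F}_{v_i})$, so \eqref{defining_eq_of_cond_independence} transfers verbatim. All the work is in the converse.

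For the converse, the plan is to fix finitely many indices $v_1, \dots, v_k \in V$ and $Y_i \in L^{\infty}(\Omega, \mathcal{F}_{v_i})$, and approximate each $Y_i$ by $\mathcal{F}^n_{v_i}$-measurable random variables via the conditional expectations $Y_i^{(n)} := \condexpect{Y_i}{\mathcal{F}^n_{v_i}}$. Since $(\mathcal{F}^n_{v_i})_n$ is an increasing filtration whose union generates $\mathcal{F}_{v_i}$, L\'evy's upward martingale convergence theorem yields $Y_i^{(n)} \to Y_i$ almost surely and in $L^1$, with the uniform bound $|Y_i^{(n)}| \leq \|Y_i\|_{\infty}$ holding for every $n$. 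By hypothesis, for each $n$,
\begin{equation*}
\condexpect{\prod_{i=1}^k Y_i^{(n)}}{\mathcal{G}} = \prod_{i=1}^k \condexpect{Y_i^{(n)}}{\mathcal{G}}.
\end{equation*}

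I would then pass to the limit $n \to \infty$ on both sides in $L^1$. Dominated convergence gives $\prod_i Y_i^{(n)} \to \prod_i Y_i$ in $L^1$, and since conditional expectation is a contraction on $L^1$, the left-hand side converges to $\condexpect{\prod_i Y_i}{\mathcal{G}}$. For the right-hand side, each factor $\condexpect{Y_i^{(n)}}{\mathcal{G}}$ converges in $L^1$ to $\condexpect{Y_i}{\mathcal{G}}$ and is uniformly bounded by $\|Y_i\|_{\infty}$; a standard telescoping estimate
\begin{equation*}
\prod_{i=1}^k a_i^{(n)} - \prod_{i=1}^k a_i = \sum_{j=1}^k \Bigl(\prod_{i<j} a_i^{(n)}\Bigr)(a_j^{(n)} - a_j)\Bigl(\prod_{i>j} a_i\Bigr)
\end{equation*}
then shows that the product of conditional expectations converges in $L^1$ as well. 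Equating the two $L^1$-limits yields \eqref{defining_eq_of_cond_independence} for the original $\sigma$-algebras $\mathcal{F}_{v_1}, \dots, \mathcal{F}_{v_k}$.

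The only technical point that needs care is ensuring all convergences take place in $L^1$ (or almost surely with a uniform bound); this is where the boundedness built into the definition \eqref{defining_eq_of_cond_independence} through $L^{\infty}$ test functions is essential, since without it neither the dominated convergence step nor the telescoping bound for products would be available. Beyond this, the argument is entirely routine.
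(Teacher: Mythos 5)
Your proof is correct and is precisely the ``standard exercise in measure theory'' that the paper cites without writing out: approximate $\mathcal{F}_v$-measurable test functions by their conditional expectations onto the increasing filtration, invoke L\'evy's upward theorem (stated in the paper as \eqref{upward}), and pass to the limit in the factorization identity using the uniform $L^\infty$ bound together with $L^1$-contractivity of conditional expectation and the telescoping estimate for products. Since the paper offers no proof of its own, there is no alternative route to compare against; yours is the expected one and it is complete.
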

The proof is a standard exercise in measure theory.

Two measurable spaces are \emph{Borel-isomorphic} if there exists a bijection
$\Phi$ between them such that $\Phi$ and $\Phi^{-1}$ are measurable.
A \emph{ Borel space} $\left(S, \mathcal{B}\right)$
 is a measurable space which is Borel-isomorphic to some Borel subset of the real line.

If $Y: \Omega \to S$ is a random variable taking values in the Borel space $\left(S, \mathcal{B}\right)$ then
 the $\sigma$-algebra generated by $Y$ is
$ \sigma(Y):= \left\{ Y^{-1}(B)\, :\, B \in \mathcal{B} \right\}$.
 By 3.13 of \cite{williams}:
 \begin{equation}\label{measurability_trick}
  Z \in \mathbf{m} \sigma(Y) \quad \iff \quad  \exists \; f \in \mathbf{m} \mathcal{B}\,:\, Z=f(Y)
  \end{equation}

Let $\left( Y_v \right)_{v \in V}$ be a finite or countably infinite family of random variables taking values in the Borel spaces
$Y_v \in S_v$.
 We say that $\left( Y_v \right)_{v \in V}$ are conditionally independent given $\cG$
  if $\left( \sigma(Y_v) \right)_{v \in V}$
  are conditionally independent given $\cG$. If $\cG=\sigma(Z)$ where  $Z$ is a random variable taking
   its values in the Borel space $S_Z$ then we say that $\left( Y_v \right)_{v \in V}$ are conditionally independent given
    $Z$.

\begin{lemma}\label{eztkellellenoriznifeltfuggetlenseghez}
Let $X$, $Y$ be random variables taking values in the Borel spaces
$S_X$, $S_Y$. Let $\cG^1 \subseteq \cG^2 \subseteq \dots$ and $\cG=\sigma\left( \bigcup_{n=1}^{\infty} \cG^n \right)$.
 $X$ and $Y$ are conditionally independent given $\cG$
 if and only if for all $n \in \N$, $Z \in L^{\infty}(\Omega, \cG^n)$, $f \in L^{\infty}(S_X)$, $g \in L^{\infty}(S_Y)$ we have
\begin{equation}
\expect{f(X)g(Y)Z}=\expect{\condexpect{f(X)}{\cG}g(Y)Z}
\end{equation}
\end{lemma}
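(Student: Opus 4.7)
The proof is a natural two-step argument: the ``only if'' direction is a direct tower-property manipulation, while the ``if'' direction requires extending the hypothesis from the exhausting algebras $\cG^n$ to $\cG$ itself via the martingale convergence theorem.

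For the $(\Rightarrow)$ direction, I would assume $X$ and $Y$ are conditionally independent given $\cG$, which by \eqref{defining_eq_of_cond_independence} and \eqref{measurability_trick} is equivalent to $\condexpect{f(X)g(Y)}{\cG} = \condexpect{f(X)}{\cG}\condexpect{g(Y)}{\cG}$ almost surely for all bounded measurable $f, g$. Given $Z \in L^\infty(\Omega,\cG^n) \subseteq L^\infty(\Omega,\cG)$, conditioning on $\cG$ and then re-expanding gives
\begin{align*}
\expect{f(X)g(Y)Z} &= \expect{Z\,\condexpect{f(X)g(Y)}{\cG}} = \expect{Z\,\condexpect{f(X)}{\cG}\condexpect{g(Y)}{\cG}}\\
&= \expect{\condexpect{\condexpect{f(X)}{\cG}\,g(Y)\,Z}{\cG}} = \expect{\condexpect{f(X)}{\cG}\,g(Y)\,Z},
\end{align*}
where in the last line I use that $\condexpect{f(X)}{\cG}\,Z$ is itself $\cG$-measurable and bounded, so it may be pulled out of the inner conditional expectation.

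For the $(\Leftarrow)$ direction the crucial step is to upgrade the hypothesis from $Z \in L^\infty(\cG^n)$ to an arbitrary $W \in L^\infty(\Omega,\cG)$. Given such $W$, set $W_k := \condexpect{W}{\cG^k} \in L^\infty(\cG^k)$; the martingale convergence theorem yields $W_k \to W$ almost surely, together with the uniform bound $|W_k| \le \|W\|_\infty$. Applying the hypothesis with $Z = W_k$ and then dominated convergence with integrable envelope $\|f\|_\infty\|g\|_\infty\|W\|_\infty$ produces
\[\expect{f(X)g(Y)W} = \expect{\condexpect{f(X)}{\cG}\,g(Y)\,W} \qquad \forall W \in L^\infty(\Omega, \cG).\]
Conditioning both sides on $\cG$ (the LHS directly, the RHS by pulling the already-$\cG$-measurable factor outside an inner conditional expectation on $g(Y)$) gives
\[\expect{\condexpect{f(X)g(Y)}{\cG}\,W} = \expect{\condexpect{f(X)}{\cG}\condexpect{g(Y)}{\cG}\,W},\]
and since $W \in L^\infty(\cG)$ is arbitrary, the two $\cG$-measurable random variables $\condexpect{f(X)g(Y)}{\cG}$ and $\condexpect{f(X)}{\cG}\condexpect{g(Y)}{\cG}$ coincide almost surely. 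As $f$ and $g$ were arbitrary bounded Borel, \eqref{measurability_trick} converts this into the defining equation \eqref{defining_eq_of_cond_independence} for $\sigma(X)$ and $\sigma(Y)$ given $\cG$.

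The main obstacle is the approximation step: we must pass to the limit inside an expectation on both sides simultaneously, and the natural choice $W_k = \condexpect{W}{\cG^k}$ is what simultaneously furnishes $\cG^k$-measurability (so the hypothesis applies) and a sharp uniform envelope $\|W\|_\infty$ (so dominated convergence is available). This is why the martingale convergence theorem, rather than a generic $L^2$-density or monotone-class argument, is the cleanest tool here.
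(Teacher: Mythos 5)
Your proof is correct and supplies exactly the details the paper leaves to the reader (the paper only remarks that the lemma ``easily follows from the definitions''). The forward direction is the routine tower/pull-out manipulation, and the backward direction is, as you say, the substantive step: upgrade the test class from $\bigcup_n L^\infty(\Omega,\cG^n)$ to $L^\infty(\Omega,\cG)$, then condition on $\cG$ and use that two $\cG$-measurable bounded random variables with equal integrals against every bounded $\cG$-measurable function agree a.s.\ Your choice of $W_k:=\condexpect{W}{\cG^k}$ together with L\'evy's upward theorem and dominated convergence is the same instrument the paper relies on elsewhere (it cites the upward/downward theorems explicitly), so this is in the spirit the authors intend. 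Two tiny remarks: what you are invoking is specifically L\'evy's upward theorem (you need the limit to be $\condexpect{W}{\cG}=W$, not just that the martingale converges), and an alternative --- arguably even more elementary --- route for the same upgrade is a $\pi$--$\lambda$/functional monotone class argument over the $\pi$-system $\bigcup_n\cG^n$; both are standard and your martingale version is fine. The final step, translating ``$\condexpect{f(X)g(Y)}{\cG}=\condexpect{f(X)}{\cG}\condexpect{g(Y)}{\cG}$ for all bounded Borel $f,g$'' into conditional independence of $\sigma(X)$ and $\sigma(Y)$, is justified by \eqref{measurability_trick} and the definition \eqref{defining_eq_of_cond_independence}, as you note.
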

The proof easily follows from the definitions of generated $\sigma$-algebra, conditional expectation and conditional independence.

 If $V=\{v_1,v_2,\dots\}$ is a
finite or countably infinite set then $\left(\N_0^V,\mathcal{B}\right)$ is a Borel space where $\mathcal{B}$ is the $\sigma$-algebra
generated by the finite cylinder sets of $\N_0^V$. A sequence
$\left(Y_v\right)_{v \in V}$ of $\N_0$-valued random variables may be regarded as a single r.v. $\mathbf{Y}$ taking values in the
Borel space $\N_0^V$. In this case
\begin{equation}\label{vegesites_eq}
 \sigma( \mathbf{Y} )= \sigma \left( \bigcup_{n=1}^{\infty} \sigma(Y_{v_1},\dots,Y_{v_n}) \right)
 \end{equation}

\begin{lemma}\label{cond_indep_drop_lemma}
Let $X$, $Y$ and $Z$ be random variables taking values in the Borel spaces
$S_X$, $S_Y$, $S_Z$.
$X$ and $Y$ are conditionally independent given $Z$ if and only if
\begin{equation}\label{cond_indep_drop_formula}
 \condexpect{f(X)}{Y,Z}=\condexpect{f(X)}{Z}
\end{equation}
for each $f \in L^{\infty}(S_X)$.
\end{lemma}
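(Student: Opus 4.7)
The plan is to prove both directions directly from \eqref{def_prop_of_cond_expectation} and the definition \eqref{defining_eq_of_cond_independence} of conditional independence, using nothing deeper than the tower property and ``taking out what is known''.

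For the forward implication, assume $X$ and $Y$ are conditionally independent given $Z$. The candidate $\condexpect{f(X)}{Z}$ is $\sigma(Z)$-measurable, hence $\sigma(Y,Z)$-measurable, so by \eqref{def_prop_of_cond_expectation} the desired identity reduces to checking
$$\expect{f(X)\, W} = \expect{\condexpect{f(X)}{Z}\, W} \quad \text{for every } W \in L^{\infty}(\Omega,\sigma(Y,Z)).$$
Viewing $(Y,Z)$ as a single random variable into the Borel space $S_Y \times S_Z$, \eqref{measurability_trick} writes any such $W$ as $h(Y,Z)$, and a standard monotone-class argument reduces the check to products $W = g(Y) k(Z)$. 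For such $W$, two applications of ``taking out what is known'' (first pulling $k(Z)$ inside a conditional expectation given $Z$, then using \eqref{defining_eq_of_cond_independence} to split $\condexpect{f(X)g(Y)}{Z}$ as $\condexpect{f(X)}{Z}\condexpect{g(Y)}{Z}$, and finally pulling $g(Y)$ back out past the $\sigma(Z)$-measurable factor $\condexpect{f(X)}{Z}k(Z)$) give the required equality.

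For the backward implication, assume \eqref{cond_indep_drop_formula}. Given bounded measurable $f$ on $S_X$ and $g$ on $S_Y$, the tower property together with ``taking out what is known'' (using that $g(Y)$ is $\sigma(Y,Z)$-measurable) yields
$$\condexpect{f(X) g(Y)}{Z} = \condexpect{g(Y)\condexpect{f(X)}{Y,Z}}{Z} = \condexpect{g(Y)\condexpect{f(X)}{Z}}{Z} = \condexpect{f(X)}{Z}\condexpect{g(Y)}{Z},$$
where the middle equality is \eqref{cond_indep_drop_formula} and the last pulls out the $\sigma(Z)$-measurable factor $\condexpect{f(X)}{Z}$. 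This is exactly the $n=2$ case of \eqref{defining_eq_of_cond_independence}, which by \eqref{measurability_trick} covers all bounded $\sigma(X)$- and $\sigma(Y)$-measurable random variables, so $\sigma(X)$ and $\sigma(Y)$ are conditionally independent given $\sigma(Z)$.

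I do not anticipate a real obstacle. The only slightly delicate point is in the forward direction, namely the reduction from a general $W \in \mathbf{m}\sigma(Y,Z)$ to products $g(Y)k(Z)$; this is where the Borel-space hypothesis on $S_Y$ and $S_Z$ enters, via \eqref{measurability_trick} and a standard monotone-class (or $\pi$--$\lambda$) extension.
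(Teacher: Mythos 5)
The paper does not actually prove this lemma: it simply cites Theorem~1.45 of Dellacherie--Meyer. Your proof is correct and self-contained, and it is the standard textbook argument. Both directions are driven by the tower property and ``taking out what is known'': for the forward direction you reduce, via a $\pi$--$\lambda$ (monotone-class) argument over the generating $\pi$-system $\{Y\in B\}\cap\{Z\in C\}$ (or equivalently via \eqref{measurability_trick} applied to the Borel space $S_Y\times S_Z$), to testing against products $g(Y)k(Z)$, where conditional independence splits $\condexpect{f(X)g(Y)}{Z}$; for the backward direction you condition on $\sigma(Y,Z)$ and then $\sigma(Z)$, which gives precisely the two-factor identity. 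One small remark: the defining equation \eqref{defining_eq_of_cond_independence} is to be read with distinct $v_1,\dots,v_n$, and for just the two $\sigma$-algebras $\sigma(X)$ and $\sigma(Y)$ it collapses to exactly the single product identity
$\condexpect{f(X)g(Y)}{Z}=\condexpect{f(X)}{Z}\condexpect{g(Y)}{Z}$ for bounded $f,g$, so the backward direction as you wrote it does establish the full definition. Since the paper offers only a citation, your argument supplies what is missing; compared with simply invoking the reference, it has the advantage of staying entirely within the toolkit (\eqref{def_prop_of_cond_expectation}, \eqref{measurability_trick}, the tower property) that the paper has already set up.
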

This is Theorem 1.45 in \cite{dellacherie}.

Call a family of random variables $\left(Y_v\right)_{v \in V}$ (taking values in the Borel space $S$)
 \emph{conditionally identically distributed}
given $\cG$ if $\condexpect{ f(Y_v)}{\cG}=\condexpect{ f(Y_w)}{\cG}$ for each $v,w \in V$ and $f \in L^{\infty}(S)$. When
$\cG=\sigma(Z)$ this is equivalent to $\left(Y_v,Z\right)\sim \left(Y_w,Z\right)$ for all $v,w \in V$.

 \begin{lemma}[Identification Lemma] \label{identificationlemma}
Let $\mathbf{Y}=\left(Y_v\right)_{v \in V}$,
$\mathbf{Y^*}=\left(Y^*_v\right)_{v \in V}$ and $Z$ be such that
\begin{enumerate}
\item $\{ Y_v: v \in V \}$ are conditionally independent given
$Z$,
\item $\{ Y^*_v: v \in V \}$ are conditionally independent given
$Z$,
\item for each $v \in V$, $Y_v$ and $Y^*_v$ are conditionally
indentically distributed given $Z$.
\end{enumerate}
Then $(Z,\mathbf{Y}) \sim (Z,\mathbf{Y^*})$
\end{lemma}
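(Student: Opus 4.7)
The plan is to reduce the equality $(Z,\mathbf{Y}) \sim (Z,\mathbf{Y^*})$ of laws on the Borel space $S_Z \times S^V$ to equality of every finite-dimensional marginal $(Z, Y_{v_1}, \dots, Y_{v_n}) \sim (Z, Y^*_{v_1}, \dots, Y^*_{v_n})$, and then to verify these via a short conditional-expectation computation that threads the three hypotheses together in turn.

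For the reduction I would invoke \eqref{vegesites_eq} to write $\sigma(\mathbf{Y})$ as the $\sigma$-algebra generated by the countable union of the cylinder subalgebras $\sigma(Y_{v_1},\dots,Y_{v_n})$, and then run a standard $\pi$-$\lambda$ (monotone class) argument on products of rectangles to conclude that the joint law of $(Z,\mathbf{Y})$ on $S_Z \times S^V$ is determined by the numbers
\[
\expect{h(Z) \prod_{i=1}^n f_i(Y_{v_i})}
\]
as $h$ ranges over bounded measurable functions on $S_Z$, each $f_i$ ranges over bounded measurable functions on $S$, and $\{v_1,\dots,v_n\}$ ranges over finite subsets of $V$. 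So it is enough to match these quantities for $\mathbf{Y}$ and $\mathbf{Y^*}$.

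The core step is then to fix such $h$, $f_i$ and $v_1,\dots,v_n$ and to chain the three hypotheses. Conditioning on $Z$ and using hypothesis (1) factors the inner conditional expectation as $\prod_i \condexpect{f_i(Y_{v_i})}{Z}$. Hypothesis (3) replaces each $\condexpect{f_i(Y_{v_i})}{Z}$ by $\condexpect{f_i(Y^*_{v_i})}{Z}$ almost surely, and hypothesis (2) lets me reassemble the product into a single conditional expectation, yielding $\expect{h(Z)\prod_i f_i(Y^*_{v_i})}$. This matches the two finite marginals and closes the argument.

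The main obstacle is essentially bookkeeping rather than mathematics: handling the possibly countable index set $V$ cleanly in the monotone class step, and arguing that hypothesis (3) yields a single almost-sure identity of the relevant $\sigma(Z)$-measurable conditional expectations (so that the substitution inside the finite product is valid on one null set outside which the whole identity holds simultaneously). Once those measure-theoretic details are settled, the argument is forced by the structure of the three assumptions.
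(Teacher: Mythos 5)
The paper does not actually supply a proof of this lemma; it is dismissed with ``The proof of this statement is an easy exercise.'' Your argument is correct and is the natural proof one would expect the authors to have in mind: reduce to finite-dimensional marginals (the monotone class / $\pi$--$\lambda$ step, using \eqref{vegesites_eq} or the corresponding fact for the Borel space $S^V$), then compute
\begin{align*}
\expect{h(Z)\prod_{i=1}^n f_i(Y_{v_i})}
&= \expect{h(Z)\,\condexpect{\prod_{i=1}^n f_i(Y_{v_i})}{Z}}
 \stackrel{(1)}{=} \expect{h(Z)\prod_{i=1}^n \condexpect{f_i(Y_{v_i})}{Z}}\\
&\stackrel{(3)}{=} \expect{h(Z)\prod_{i=1}^n \condexpect{f_i(Y^*_{v_i})}{Z}}
 \stackrel{(2)}{=} \expect{h(Z)\prod_{i=1}^n f_i(Y^*_{v_i})},
\end{align*}
which is exactly what you describe. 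The null-set concern you raise at the end is actually a non-issue once one works at the level of finite marginals: for a fixed finite tuple $(f_1,\dots,f_n)$, hypothesis (3) gives an almost-sure equality $\condexpect{f_i(Y_{v_i})}{Z}=\condexpect{f_i(Y^*_{v_i})}{Z}$ for each $i$ separately, and the finite union of the exceptional null sets is again null, so the product identity holds almost surely; no uniformity over an uncountable family of test functions is needed. In short, your proof is correct and complete modulo routine details, and fills in a gap the paper leaves as an exercise.
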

The proof of this statement is an easy exercise.

Let $\alpha$ denote a random variable uniformly distributed on
$\lbrack 0, 1 \rbrack$, or $ \alpha \sim U\lbrack 0, 1 \rbrack$. Constructing r.v.'s with prescribed distributions
 using $\alpha$ is called \emph{coding}.

\begin{lemma}[Coding Lemma]\label{codinglemma}

$ $

\begin{enumerate}[ (i) ]
\item \label{codinglemma1}
 Let $Y$ be a random variable taking values in the Borel space
$S_Y$. Then
there exists a measurable function $f: \lbrack 0,1 \rbrack \to
S_Y$ such that $Y \sim f(\alpha)$
\item \label{codinglemma2}
 Suppose further that $Z$ is a random variable
taking values in $S_Z$, and suppose that $\alpha$ is independent
of $Z$. Then there exists a measurable function \[g: S_Z
\times \lbrack 0, 1 \rbrack \to S_Y\] such that $(Z,Y) \sim
(Z, g(Z,\alpha))$.
\item \label{codinglemma3}
 Suppose further that $X_1$ and $X_2$ are random variables taking values in $S_X$,
\[(Z,X_1,X_2,Y) \sim (Z,X_2,X_1,Y)\] and  $\alpha$ is independent
of $(Z,X_1,X_2)$. Then there exists a measurable function
\[h: S_Z \times S_X \times S_X
\times \lbrack 0, 1 \rbrack \to S_Y\]
such that $h(z,x_1,x_2,a) \equiv h(z,x_2,x_1,a)$ and
\[ (Z,X_1,X_2,Y)\sim (Z,X_1,X_2,h(Z,X_1,X_2,\alpha))\]
\end{enumerate}
\end{lemma}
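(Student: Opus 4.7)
The plan for part \ref{codinglemma1} is to reduce to the real-valued case using the defining property of a Borel space. Fix a Borel isomorphism $\Phi: S_Y \to B$ for some Borel $B \subseteq \R$ and put $\tilde Y := \Phi(Y)$, $F(y) := \prob{\tilde Y \leq y}$. Define the quantile function $F^{-1}(a) := \inf\{y \in \R : F(y) \geq a\}$, which is measurable and nondecreasing, and set $f(a) := \Phi^{-1}(F^{-1}(a))$ (with an arbitrary default value in $S_Y$ whenever $F^{-1}(a) \notin B$). The classical identity $\{F^{-1}(\alpha) \leq y\} = \{\alpha \leq F(y)\}$ combined with $\alpha \sim U[0,1]$ yields $F^{-1}(\alpha) \sim \tilde Y$, and hence $f(\alpha) \sim Y$.

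For part \ref{codinglemma2} I will promote this construction to a parametric family. Because $S_Y$ is a Borel space, there exists a regular conditional distribution (a probability kernel) $K(z,\cdot)$ giving the conditional law of $Y$ given $Z=z$. Transporting $Y$ to $\R$ via the isomorphism $\Phi_Y$ of the previous step, the conditional CDF $F_z(y) := K(z, \Phi_Y^{-1}((-\infty,y]))$ is jointly measurable in $(z,y)$, and a standard monotone-class argument shows that its quantile $F_z^{-1}(a)$ is jointly measurable in $(z,a)$. Setting $g(z,a) := \Phi_Y^{-1}(F_z^{-1}(a))$, the independence of $\alpha$ from $Z$ lets me condition on $Z$: for each fixed $z$, $g(z,\alpha)$ has distribution $K(z,\cdot)$, so $(Z, g(Z,\alpha)) \sim (Z, Y)$ via \eqref{def_prop_of_cond_expectation}.

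Part \ref{codinglemma3} will follow by applying \ref{codinglemma2} with the Borel-space-valued conditioning variable $(Z,X_1,X_2)$ and then symmetrizing. This yields a measurable $g: S_Z \times S_X \times S_X \times [0,1] \to S_Y$ with $(Z,X_1,X_2,Y) \sim (Z,X_1,X_2, g(Z,X_1,X_2,\alpha))$, but $g$ is not yet symmetric. Fix a Borel isomorphism $\Phi_X: S_X \to B_X \subseteq \R$, use the induced linear order to define $\min$ and $\max$ on $S_X$ (which are measurable), and set
\[
h(z,x_1,x_2,a) := g\bigl(z, \min(x_1,x_2), \max(x_1,x_2), a\bigr),
\]
so that $h(z,x_1,x_2,a) \equiv h(z,x_2,x_1,a)$ by construction. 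The hypothesis $(Z,X_1,X_2,Y) \sim (Z,X_2,X_1,Y)$ forces the regular conditional distribution $K(z,x_1,x_2,\cdot)$ of $Y$ given $(Z,X_1,X_2)$ to satisfy $K(z,x_1,x_2,\cdot)=K(z,x_2,x_1,\cdot)$ for almost every realization of $(Z,X_1,X_2)$, so $g(Z, X_1, X_2, \alpha)$ and $g(Z, X_2, X_1, \alpha)$ share the same conditional distribution given $(Z,X_1,X_2)$. This gives $h(Z,X_1,X_2,\alpha)$ the same conditional distribution as $g(Z,X_1,X_2,\alpha)$ given $(Z,X_1,X_2)$, from which the claimed distributional identity for $h$ follows.

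The step I expect to be the main obstacle is the symmetrization in part \ref{codinglemma3}: the kernel identity $K(z,x_1,x_2,\cdot)=K(z,x_2,x_1,\cdot)$ holds only almost surely on the support of $(Z,X_1,X_2)$, and I must confirm that replacing $g$ by the $\min/\max$-symmetrized version does not spoil the distributional equality. Concretely, the verification reduces to checking that for every bounded measurable $\phi$,
\[
\expect{\phi\bigl(Z,X_1,X_2, h(Z,X_1,X_2,\alpha)\bigr)} = \expect{\phi(Z,X_1,X_2, Y)},
\]
which I will do by conditioning on $(Z,X_1,X_2)$, splitting on the event $\{\Phi_X(X_1) \leq \Phi_X(X_2)\}$ versus its complement, and on the latter event swapping $X_1 \leftrightarrow X_2$ inside the expectation (licensed by the exchangeability hypothesis) to reduce $h$ back to $g$ evaluated at its original arguments.
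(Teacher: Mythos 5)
Your plan is sound and follows the paper's construction in all three parts: quantile inverse for \eqref{codinglemma1}, conditional quantile inverse for \eqref{codinglemma2}, and a symmetrized conditional quantile for \eqref{codinglemma3}, with the Borel-space case reduced to the real-valued case via a Borel isomorphism throughout. The one genuine difference is in \eqref{codinglemma3}: the paper simply asserts that the exchangeability hypothesis forces the conditional distribution function to satisfy $F(y\mid z,x_1,x_2)\equiv F(y\mid z,x_2,x_1)$ and reads off a symmetric $h$, glossing over the fact that a regular conditional distribution is only determined up to a null set of the law of $(Z,X_1,X_2)$, so this identity holds only almost everywhere rather than pointwise, while the lemma requires $h$ to be symmetric for \emph{all} arguments. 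You correctly identify this gap and close it: you first build a not-necessarily-symmetric $g$ via \eqref{codinglemma2}, force pointwise symmetry by evaluating $g$ at $(\min(x_1,x_2),\max(x_1,x_2))$ (pulled back through a Borel isomorphism of $S_X$), and then verify the distributional identity by splitting on $\{\Phi_X(X_1)\le\Phi_X(X_2)\}$ and its complement; on the complement you swap $X_1\leftrightarrow X_2$ (valid because the hypothesis gives $(Z,X_1,X_2)\sim(Z,X_2,X_1)$ and $\alpha$ is independent of this triple), apply \eqref{codinglemma2} to reduce $g$ evaluated at ordered arguments back to $Y$, and swap again. I checked this split-and-swap computation and it does go through. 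So your argument is a slightly longer but more scrupulous rendering of the same idea, and the extra care is warranted.
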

\begin{proof}

$ $

\begin{enumerate}[(i)]
\item First suppose $S_Y=\R$. Let $F(y)=\prob{Y\leq y}$ denote the right-continuous distribution function of $Y$. Let
\[ f(a):=F^{-1}(a):=\min \{y:F(y)\geq a \} \]
Then $Y\sim f(\alpha)$. The extension to the Borel space case is an immediate consequence of the definition of Borel space.
\item Again, first suppose $S_Y=\R$. Let $F(y|z)$ be the conditional distribution function of $Y$ given $Z$. For each $z$  let
$g(z,a):=F^{-1}(a|z)$ be the inverse function. With this definition $g$ has the property $(Z,Y) \sim
(Z, g(Z,\alpha))$. Again, the extension to the case when $S$ is a Borel space is straightforward.
\item Suppose $S_Y=\R$. Let $F(y|z,x_1,x_2)$ be the conditional distribution function of $Y$ given $(Z,X_1,X_2)$. From
$(Z,X_1,X_2,Y) \sim (Z,X_2,X_1,Y)$ it follows that $F(y|z,x_1,x_2)\equiv F(y|z,x_2,x_1)$.

$h(z,x_1,x_2,a):=F^{-1}(a|z,x_1,x_2)$ has the desired properties. The extension to the case when $S$ is a Borel space is straightforward.

\end{enumerate}
\end{proof}

\begin{lemma}
Let $Y \in L^2(\Omega,\cF,\probp)$ and $\cF_1 \subseteq \cF_2 \subseteq \dots$,
$\cF=\sigma\left( \bigcup_{n=1}^{\infty} \cF_n \right)$, $\cG_1 \supseteq \cG_2 \supseteq \dots$,
$\cG= \bigcap_{n=1}^{\infty} \cG_n $. Then
\begin{align}
\label{upward} \condexpect{Y}{\cF_n} &\to Y &\text{almost surely and in $L^2$}\\
\label{downward} \condexpect{Y}{\cG_n} &\to \condexpect{Y}{\cG} &\text{almost surely and in $L^2$}
\end{align}
\end{lemma}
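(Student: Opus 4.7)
The plan is to recognize that $M_n := \condexpect{Y}{\cF_n}$ is a forward martingale with respect to $(\cF_n)$, while $N_n := \condexpect{Y}{\cG_n}$ is a reverse martingale with respect to $(\cG_n)$. Statement \eqref{upward} is then the upward $L^2$-martingale convergence theorem for $M_n$, and \eqref{downward} is its downward counterpart for $N_n$; since the paper aims for self-containedness, I would sketch both directly from the already-stated Steiner identity \eqref{steiner}.

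For \eqref{upward}, I would first establish $L^2$-convergence. Applying \eqref{steiner} with the sub-$\sigma$-algebra $\cF_n$ and the choice $Z = M_{n+1}$ (which is $\cF_{n+1}\supseteq\cF_n$-measurable, so also usable in a two-step identity) yields $\expect{M_{n+1}^2} = \expect{M_n^2} + \expect{(M_{n+1}-M_n)^2}$ and $\expect{Y^2} = \expect{M_n^2} + \expect{(Y-M_n)^2}$. Hence $\expect{M_n^2}$ is nondecreasing and bounded by $\expect{Y^2}$, the increments are pairwise orthogonal, and $\sum_n \expect{(M_{n+1}-M_n)^2} \leq \expect{Y^2} < \infty$. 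So $M_n$ is Cauchy in $L^2$ and converges to some $M_\infty \in L^2(\Omega,\cF)$. To identify $M_\infty = Y$, I would check the defining property \eqref{def_prop_of_cond_expectation}: for any fixed $m$ and any $Z \in L^\infty(\Omega,\cF_m)$, $\expect{M_n Z} = \expect{Y Z}$ for every $n \geq m$ by the tower rule, hence $\expect{M_\infty Z} = \expect{Y Z}$; since $\bigcup_m L^\infty(\cF_m)$ is dense in $L^2(\Omega,\cF)$ by a monotone-class argument, this forces $M_\infty = \condexpect{Y}{\cF} = Y$ a.s. Almost-sure convergence I would get from Doob's $L^2$ maximal inequality $\expect{\max_{k\leq N}(M_{n+k}-M_n)^2} \leq 4\expect{(M_{n+N}-M_n)^2}$, which combined with the $L^2$-Cauchy property gives $\sup_{k\geq 0}|M_{n+k}-M_n|\to 0$ in probability and, after a standard subsequence-and-monotonicity argument, almost surely.

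For \eqref{downward} the structure is symmetric with reversed time. Steiner's theorem applied with sub-$\sigma$-algebra $\cG_{n+1}\subseteq \cG_n$ and $Z=N_{n+1}$ yields $\expect{N_n^2}=\expect{N_{n+1}^2}+\expect{(N_n-N_{n+1})^2}$, so $(\expect{N_n^2})$ is nonincreasing and $\sum_n \expect{(N_n-N_{n+1})^2}\le \expect{N_1^2}\le \expect{Y^2}$. Hence $N_n$ is $L^2$-Cauchy and converges to some $N_\infty$. Because $N_n$ is $\cG_m$-measurable for every $n\geq m$, the $L^2$-limit is $\cG_m$-measurable for all $m$, hence $\cG$-measurable. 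For the identification, pick any bounded $\cG$-measurable $Z$; then $Z$ is $\cG_n$-measurable for every $n$, so $\expect{N_\infty Z}=\lim_n \expect{N_n Z}=\expect{YZ}$ by the tower rule, and \eqref{def_prop_of_cond_expectation} gives $N_\infty=\condexpect{Y}{\cG}$. Almost-sure convergence follows from the reverse-martingale version of Doob's maximal inequality applied to the finite blocks $(N_m,\dots,N_n)$.

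The only nontrivial point, and the one I would spend most care on, is the two limit identifications $M_\infty = Y$ and $N_\infty = \condexpect{Y}{\cG}$: everything else is Hilbert-space orthogonality plus Doob-type inequalities. The density/monotone-class step used to pass from test functions in $\bigcup_m L^\infty(\cF_m)$ to all of $L^2(\Omega,\cF)$ in the upward case, and the observation that the backward limit is automatically $\cG$-measurable in the downward case, are the places where the structural hypotheses $\cF=\sigma(\bigcup_n \cF_n)$ and $\cG=\bigcap_n \cG_n$ are actually used.
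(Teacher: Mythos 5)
Your proposal is correct, and it agrees with the paper on the $L^2$ half: both use the orthogonal-decomposition identity \eqref{steiner} to get monotone, bounded second moments and $L^2$-Cauchyness, and both identify the limit via the defining property \eqref{def_prop_of_cond_expectation} together with the hypotheses $\cF=\sigma(\bigcup_n\cF_n)$ and $\cG=\bigcap_n\cG_n$. Where you diverge is the almost-sure part: the paper simply invokes L\'evy's Upward and Downward theorems (Theorems 14.2 and 14.4 of Williams, whose proofs rest on Doob's upcrossing inequality), whereas you derive a.s.\ convergence from Doob's $L^2$ maximal inequality applied to the tail martingale $(M_{n+k}-M_n)_{k\ge 0}$, passing from $\expect{\sup_{k}(M_{n+k}-M_n)^2}\le 4\,\expect{(M_\infty-M_n)^2}\to 0$ to a.s.\ convergence via the monotone quantity $\sup_{j,k\ge n}|M_j-M_k|$. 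This is a legitimate alternative that trades the upcrossing machinery for the maximal inequality and is arguably more in the spirit of the $L^2$ setting, since it reuses the same Pythagorean estimates. Two small points worth tightening if you were to write it out in full: the quantity $\sup_{k\ge 0}|M_{n+k}-M_n|$ is not itself monotone in $n$, so the "standard subsequence-and-monotonicity argument" should be phrased in terms of $\sup_{j,k\ge n}|M_j-M_k|$, which is dominated by twice your quantity and is non-increasing; and in the downward identification, the step "the $L^2$-limit is $\cG_m$-measurable for all $m$, hence $\cG$-measurable" uses the (true but not completely trivial) fact that $\bigcap_m L^2(\Omega,\cG_m,\probp)=L^2(\Omega,\cG,\probp)$, which deserves a sentence, e.g.\ by taking $\cG_m$-measurable versions and forming a countable limsup.
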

The $L^2$ convergence results easily follow from standard Hilbert space arguments and \eqref{steiner}.
The a.s. convergence results follow from
 L\'evy's 'Upward' and 'Downward' theorems (Theorems 14.2 and 14.4 in \cite{williams}).

\subsection{Proof of Theorem \ref{theorem_aldous}}

For the proof, let $\Nn:= \{ -1,-2,\dots \}$. To simplify our notation we denote $X_{i,j}:=X(i,j)$.
It is easy to see (e.g. by Lemma \ref{theorem_kolmogorov}) that the random array
 $\left(X_{i,j} \right)_{i,j \in \N}$ has an exchangeable extension
$\left(X_{i,j} \right)_{i,j \in \Nn \cup \N}$.

In this section, $\rbM$ and $\rbN$ will always denote \emph{finite} sequences of integers.

 If $\rbM=(i_1,i_2,\dots,i_m)$ and $\rbN=(j_1,j_2,\dots,j_n)$
are sequences of integers and $\left(X_{i,j} \right)_{i,j \in \Nn \cup \N}$ is a random array then $X_{\rbN,\rbM}$ denotes the random array
\[ X_{\rbN,\rbM}:= \left(
\begin{array}{cccc}
X_{i_1,j_1} & X_{i_1,j_2} & \dots  & X_{i_1,j_n} \\
X_{i_2,j_1} & X_{i_2,j_2} & \dots  & X_{i_2,j_n} \\
\vdots      & \vdots      & \ddots & \vdots      \\
X_{i_m,j_1} & X_{i_m,j_2} & \dots & X_{i_m,j_n}  \\
\end{array} \right)
\]
We define the concatenation of the sequences $\rbM$ and
$\rbN$ by \[\rbM \rbN=(i_1,\dots,i_m,j_1,\dots,j_n).\]

\begin{lemma}\label{rows_cond_indep_lemma}
For any vertex exchangeable array $\left(X_{i,j}\right)_{i,j \in \N \cup \Nn}$ the $\N_0^{\Nn}$-valued random variables
 $\left(X_{n,\Nn}\right)_{n \in \N}$  are conditionally
independent given $X_{\Nn,\Nn}$.
\end{lemma}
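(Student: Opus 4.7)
The strategy is a swap argument exploiting vertex exchangeability over the full doubly-infinite index set $\N \cup \Nn$: exchange positive row indices with sufficiently far negative row indices (which makes the corresponding row variables $\sigma(X_{\Nn,\Nn})$-measurable), then average and apply the strong law for the resulting exchangeable sequence.

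By Lemma \ref{eztkellellenoriznifeltfuggetlenseghez}, conditional independence given $\sigma(X_{\Nn,\Nn})$ can be tested against bounded $\sigma(X_{\rbN',\rbN'})$-measurable random variables for finite $\rbN' \subset \Nn$; by Lemma \ref{lemma_vegesites} combined with \eqref{vegesites_eq}, each row $X_{n,\Nn}$ may be replaced by a finite segment $X_{n,\rbN}$ with $\rbN \subset \Nn$ finite. It therefore suffices to show, for every finite $\rbN \subset \Nn$, distinct $n_1,\ldots,n_k \in \N$, and bounded measurable $\varphi_1,\ldots,\varphi_k \colon \N_0^{\rbN} \to \R$, that
\[
\condexpect{\prod_{i=1}^k \varphi_i(X_{n_i,\rbN})}{X_{\Nn,\Nn}} = \prod_{i=1}^k \condexpect{\varphi_i(X_{n_i,\rbN})}{X_{\Nn,\Nn}} \quad \text{a.s.}
\]

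Fix $m_0$ so large that $\rbN \subseteq \Nn^{m_0} := \{-1,\ldots,-m_0\}$. For distinct $m_1,\ldots,m_k > m_0$, the product of transpositions $\prod_{i=1}^k (n_i \leftrightarrow -m_i)$ is a finitely supported permutation of $\N \cup \Nn$ that fixes $\Nn^{m_0}$ pointwise and carries $(X_{n_i,\rbN})_i$ to $(X_{-m_i,\rbN})_i$; by vertex exchangeability,
\[
\condexpect{\prod_i \varphi_i(X_{n_i,\rbN})}{X_{\Nn^{m_0},\Nn^{m_0}}} = \condexpect{\prod_i \varphi_i(X_{-m_i,\rbN})}{X_{\Nn^{m_0},\Nn^{m_0}}} \quad \text{a.s.}
\]
Averaging the right side over distinct $(m_1,\ldots,m_k) \in \{m_0+1,\ldots,M\}^k$ and letting $M \to \infty$: the sequence $(X_{-m,\rbN})_{m > m_0}$ is exchangeable (any transposition of its indices fixes $\rbN$ pointwise), so by de Finetti's theorem it is conditionally i.i.d.\ given its tail $\sigma$-algebra, and this tail is a sub-$\sigma$-algebra of $\sigma(X_{\Nn,\Nn})$ because each $X_{-m,\rbN}$ is $\sigma(X_{\Nn,\Nn})$-measurable. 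The SLLN (applied to the product of Ces\`aro averages, with the diagonal terms subtracted off) identifies the a.s.\ limit as
\[
\prod_{i=1}^k L_{\varphi_i}, \qquad L_\varphi := \lim_{M \to \infty} \frac{1}{M-m_0} \sum_{m=m_0+1}^M \varphi(X_{-m,\rbN}),
\]
where each $L_\varphi$ is $\sigma(X_{\Nn,\Nn})$-measurable. Letting $m_0 \to \infty$ and applying L\'evy's upward theorem \eqref{upward} on the left along the filtration $\sigma(X_{\Nn^{m_0},\Nn^{m_0}}) \uparrow \sigma(X_{\Nn,\Nn})$ gives $\condexpect{\prod_i \varphi_i(X_{n_i,\rbN})}{X_{\Nn,\Nn}} = \prod_i L_{\varphi_i}$. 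Specializing to $k=1$ identifies $L_{\varphi_i} = \condexpect{\varphi_i(X_{n_i,\rbN})}{X_{\Nn,\Nn}}$, and the desired conditional independence follows.

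The main obstacle is the multilinear factorization step: one must verify that the average of $k$-fold products of the $\varphi_i(X_{-m_i,\rbN})$ over distinct tuples converges a.s.\ to the product of the one-dimensional Ces\`aro limits $L_{\varphi_i}$. This hinges on the de Finetti decomposition of $(X_{-m,\rbN})_{m > m_0}$, which turns it into a conditional i.i.d.\ sequence so that the classical SLLN and a simple $O(1/M)$ bound on the diagonal terms with coincident indices apply. The interchange of the two limits ($M\to\infty$ first, then $m_0 \to \infty$) is legitimate because the $\varphi_i$'s are bounded, yielding uniform integrability of all quantities involved.
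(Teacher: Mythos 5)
Your proof is correct, and it takes a genuinely different route from the paper's. The paper reduces by induction to showing that a single row $X_{i,\rbM}$ and a block $X_{\tilde{\rbN},\rbM}$ are conditionally independent given $X_{\Nn,\Nn}$; it swaps the block's indices into a finite $\tilde{\rbM}\subseteq\Nn$ (making the block $\sigma(X_{\Nn,\Nn})$-measurable), inserts the conditional expectation, and then \emph{undoes} the swap. The step that makes this legal is a tail-invariance identity \eqref{tail_prop}, proved via Steiner's identity \eqref{steiner}: $\condexpect{f(X_{i,\rbM})}{X_{\Nn,\Nn}}$ is unchanged when $\Nn$ is replaced by any cofinite $\tilde{\Nn}\subseteq\Nn$, so it survives the second application of the permutation. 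Your argument instead swaps all $k$ positive indices at once into the far tail of $\Nn$, averages over the choices of target indices $(m_1,\ldots,m_k)$, and identifies the limit via de~Finetti/the SLLN applied to the exchangeable $\N_0^{\rbN}$-valued sequence $(X_{-m,\rbN})_{m>m_0}$; the passage from the finite conditioning $\sigma(X_{\Nn^{m_0},\Nn^{m_0}})$ to $\sigma(X_{\Nn,\Nn})$ is handled by L\'evy's upward theorem rather than by the tail-invariance identity. Both proofs hinge on the same basic move (push positive row indices into $\Nn$ by exchangeability), but the paper undoes one deterministic swap while you average over many; yours avoids the pairwise induction and produces the $k$-fold factorization \eqref{defining_eq_of_cond_independence} directly, at the cost of invoking the one-dimensional de~Finetti/ergodic theorem, which the paper's proof does not need. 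One cosmetic point: the invocation of Lemma~\ref{eztkellellenoriznifeltfuggetlenseghez} at the start is superfluous in your route---you establish the defining product formula for the finite-$\rbN$ sub-$\sigma$-algebras directly, and Lemma~\ref{lemma_vegesites} alone handles the passage from finite $\rbN$ to $\Nn$.
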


\begin{proof}

By Lemma \ref{lemma_vegesites} and \eqref{vegesites_eq} we only need to
show that for all $\rbM \subseteq \Nn$ and $n \in \N$ the $\N_0^{\rbM}$-valued random variables
 $\left(X_{i,\rbM}\right)_{i \in [n]}$ are conditionally independent given $X_{\Nn,\Nn}$.


 By induction we only
need to prove that for all $\rbM \subseteq \Nn$, $ i \in \N$,  $\tilde{\rbN}
\subseteq \N$ such that $i \notin \tilde{\rbN}$ the random variables $X_{i,\rbM}$ and $X_{\tilde{\rbN},\rbM}$ are
conditionally independent given $X_{\Nn,\Nn}$.

By Lemma \ref{eztkellellenoriznifeltfuggetlenseghez} and \eqref{measurability_trick}
we only need to show that for all finite $\rbM'
\subseteq \Nn$ and all $f \in L^{\infty}(\N_0^{\rbM})$, $g \in L^{\infty}(\A_{\tilde{\rbN}, \rbM})$ and
 $h \in
L^{\infty}(\A_{\rbM', \rbM'})$ (see \eqref{general_index_set_M_N_adjacency})
 we have
\begin{equation}
\expect{f(X_{i,\rbM})g(X_{\tilde{\rbN},\rbM})h(X_{\rbM',\rbM'})}=
\expect{\condexpect{f(X_{i,\rbM})}{X_{\Nn,\Nn}}
g(X_{\tilde{\rbN},\rbM})h(X_{\rbM',\rbM'})}
\end{equation}
We might assume without loss of generality that $\rbM=\rbM \cup
\rbM'=\rbM'$. Thus we have to show
\begin{equation}\label{condexp_1}
\expect{f(X_{i,\rbM})g(X_{\tilde{\rbN},\rbM})h(X_{\rbM,\rbM})}=
\expect{\condexpect{f(X_{i,\rbM})}{X_{\Nn,\Nn}}
g(X_{\tilde{\rbN},\rbM})h(X_{\rbM,\rbM})}
\end{equation}
Now let $\tilde{\rbM} \subseteq \Nn$ be such that
$\abs{\tilde{\rbM}}=\abs{\tilde{\rbN}}$ and $\tilde{\rbM} \cap
\rbM=\emptyset$. We apply a permutation $\tau$ that swaps the
elements of $\tilde{\rbN}$ with those of $\tilde{\rbM}$. Using
exchangeability and the fact that $g(X_{\tilde{\rbM},\rbM})h(X_{\rbM,\rbM})$  is $\sigma \left(X_{\Nn,\Nn}\right)$-measurable we get
\begin{multline}\label{condexp_2}
\expect{f(X_{i,\rbM})g(X_{\tilde{\rbN},\rbM})h(X_{\rbM,\rbM})}=\expect{f(X_{i,\rbM})g(X_{\tilde{\rbM},\rbM})h(X_{\rbM,\rbM})}=\\
\expect{\condexpect{f(X_{i,\rbM})}{X_{\Nn,\Nn}}g(X_{\tilde{\rbM},\rbM})h(X_{\rbM,\rbM})}
\end{multline}
Before applying the permutation $\tau$ again we show that the random variable
 $\condexpect{f(X_{i,\rbM})}{X_{\Nn,\Nn}}$ is invariant under $\tau$. Let $\tilde{\Nn}:=\Nn \setminus \tilde{\rbM}$. Now we prove
\begin{equation}\label{tail_prop}
\condexpect{f(X_{i,\rbM})}{X_{\Nn,\Nn}}=\condexpect{f(X_{i,\rbM})}{X_{\tilde{\Nn},\tilde{\Nn}}}
\end{equation}
$\sigma(X_{\tilde{\Nn},\tilde{\Nn}})  \subseteq \sigma(X_{\Nn,\Nn})$, thus by the repeated application of \eqref{steiner}
we only need to show
\begin{equation}
\label{equal_norm}
\expect{\condexpect{f(X_{i,\rbM})}{X_{\Nn,\Nn}}^2}=\expect{\condexpect{f(X_{i,\rbM})}{X_{\tilde{\Nn},\tilde{\Nn} }}^2}
\end{equation}
 in order to prove
\eqref{tail_prop}.
Both $\Nn$ and $\tilde{\Nn}$ are countably infinite sets
and $\rbM \cap \tilde{\rbM}=\emptyset$, so there is a bijection
between $\Nn$ and $\tilde{\Nn}$ which fixes $M$. Thus by
exchangeability we have
$\left(X_{\Nn,\Nn},X_{i,\rbM}\right) \sim \left(X_{\tilde{\Nn} ,\tilde{\Nn} }, X_{i,\rbM} \right)$, which implies
$\condexpect{f(X_{i,\rbM})}{X_{\Nn,\Nn}} \sim \condexpect{f(X_{i,\rbM})}{X_{\tilde{\Nn},\tilde{\Nn}}}$
 from which \eqref{tail_prop} and \eqref{equal_norm}   follow.
 Thus
\begin{multline} \label{condexp_4}
\expect{\condexpect{f(X_{i,\rbM})}{X_{\Nn,\Nn}}g(X_{\tilde{\rbM},\rbM})h(X_{\rbM,\rbM})}=\\
\expect{\condexpect{f(X_{i,\rbM})}{X_{\tilde{\Nn},\tilde{\Nn}}}g(X_{\tilde{\rbM},\rbM})h(X_{\rbM,\rbM})}
\end{multline}

Now we apply the permutation $\tau$ again. Using exchangeability we get
 \begin{multline} \label{condexp_5}
\expect{\condexpect{f(X_{i,\rbM})}{X_{\tilde{\Nn},\tilde{\Nn}}}g(X_{\tilde{\rbM},\rbM})h(X_{\rbM,\rbM})}=\\
\expect{\condexpect{f(X_{i,\rbM})}{X_{\tilde{\Nn},\tilde{\Nn}}}g(X_{\tilde{\rbN},\rbM})h(X_{\rbM,\rbM})}.
 \end{multline}
putting together 
 \eqref{condexp_2}, \eqref{tail_prop},
\eqref{condexp_4} and \eqref{condexp_5} we get \eqref{condexp_1}.
\end{proof}

\begin{lemma}\label{edges_indep_lemma}
 $\left(X_{i,j} \right)_{1 \leq i \leq j}$ are conditionally
independent given $X_{\Nn,\Nn \N}$.
\end{lemma}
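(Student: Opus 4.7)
The plan is to imitate the proof of Lemma~\ref{rows_cond_indep_lemma}, with the enlarged conditioning $\cG := \sigma(X_{\Nn,\Nn\cup\N})$ and with individual positive--positive edges $X_{i,j}$ (for $1\le i\le j$) in place of whole rows. By a standard inductive reduction from mutual to ``one vs.\ the rest'' conditional independence, together with Lemma~\ref{lemma_vegesites}, \eqref{vegesites_eq}, and Lemma~\ref{eztkellellenoriznifeltfuggetlenseghez}, the claim reduces to verifying the following integral identity. For any fixed edge $X_{i_0,j_0}$ with $1\le i_0\le j_0$, any finite set $E \subset \{(i,j):1\le i\le j\}\setminus\{(i_0,j_0)\}$ of other positive--positive edges, any bounded measurable $f,g,h$, and any finite $\rbM\subset\Nn$, $\rbN\subset\N$,
\[
\expect{f(X_{i_0,j_0})\,g\bigl((X_e)_{e\in E}\bigr)\,h(X_{\rbM,\rbM\cup\rbN})}
=\expect{\condexpect{f(X_{i_0,j_0})}{\cG}\,g\bigl((X_e)_{e\in E}\bigr)\,h(X_{\rbM,\rbM\cup\rbN})}.
\]

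To prove this, set $V_E':=\bigl(\bigcup_{(p,q)\in E}\{p,q\}\bigr)\setminus\{i_0,j_0\}$, pick $\tilde{V}\subset\Nn$ disjoint from $\rbM$ with $|\tilde{V}|=|V_E'|$, and let $\tau$ be the finite-support involution swapping $V_E'$ with $\tilde{V}$ pointwise. Under the vertex-exchangeability rewriting $(Y_{i,j})\sim(Y_{\tau(i),\tau(j)})$: the factor $f(X_{i_0,j_0})$ is unchanged because $i_0,j_0\notin V_E'$; each edge $X_e$ with $e\in E$ now has at least one endpoint in $\tilde{V}\subset\Nn$ and so becomes $\cG$-measurable; and $h(X_{\rbM,\rbM\cup\rbN})$ likewise turns into a function of $\cG$-measurable variables. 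Pulling these $\cG$-measurable factors outside the integrand and invoking the defining property of conditional expectation produces $\condexpect{f(X_{i_0,j_0})}{\cG}$ in front.

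Finally, a tail-invariance argument parallel to \eqref{tail_prop}--\eqref{equal_norm} shows that $\condexpect{f(X_{i_0,j_0})}{\cG}=\condexpect{f(X_{i_0,j_0})}{\sigma(X_{\tilde{\Nn},\tilde{\Nn}\cup(\N\setminus V_E')})}$ almost surely, where $\tilde{\Nn}:=\Nn\setminus\tilde{V}$; this follows from the existence of a bijection $\Nn\cup\N\to\tilde{\Nn}\cup(\N\setminus V_E')$ fixing $\{i_0,j_0\}$, from vertex exchangeability, and from the $L^2$-Pythagorean identity~\eqref{steiner}. The ``tail'' $\sigma$-algebra on the right is pointwise invariant under $\tau$, so applying $\tau$ once more reverses the edge relabelings in the remaining factors without disturbing this conditional expectation, and one last application of the tail identity restores $\cG$ in its place, completing the proof.

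The chief technical obstacle is the bookkeeping for the swap: one must choose $\tilde{V}$ disjoint from $\rbM$, arrange that $V_E'$ avoids $\{i_0,j_0\}$, and verify that the ``tail'' sub-$\sigma$-algebra is pointwise $\tau$-fixed. All of this can be done because $\Nn$ is infinite and the finite index sets $V_E'$, $\{i_0,j_0\}$, $\rbM$, $\rbN$ can be placed in general position; thereafter the argument proceeds along the same four-step pattern as in the proof of Lemma~\ref{rows_cond_indep_lemma}.
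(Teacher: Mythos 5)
The overall strategy matches the paper's (swap the unlabeled vertices $V_E'$ into $\Nn$, invoke a tail-$\sigma$-algebra identity, swap back), but there is a concrete gap in the swap step. You set $V_E' := \bigl(\bigcup_{(p,q)\in E}\{p,q\}\bigr)\setminus\{i_0,j_0\}$ precisely so that $\tau$ fixes $i_0$ and $j_0$; but then the assertion that ``each edge $X_e$ with $e\in E$ now has at least one endpoint in $\tilde{V}\subset\Nn$'' fails whenever $E$ contains the diagonal entry $(i_0,i_0)$ or $(j_0,j_0)$, which is a legitimate choice when $i_0\neq j_0$ since the lemma concerns all entries $(X_{i,j})_{1\le i\le j}$ including diagonals. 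For such an $e$, $\tau$ maps $X_e$ to itself, so after the swap the factor $g\bigl((X_{\tau(e)})_{e\in E}\bigr)$ is \emph{not} $\sigma(X_{\Nn,\Nn\N})$-measurable, and the conditional expectation cannot be pulled out in front of $f(X_{i_0,j_0})$. The ``one vs.\ the rest'' formulation therefore breaks down at exactly the step that is supposed to mimic \eqref{condexp_2}.

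This is the obstacle the paper's proof is built to avoid: it never attempts a one-edge-vs.-everything factorisation. The key auxiliary identity \eqref{cond_indep_3} factors a whole block $f(X_{\rbN,\rbN})$ (which carries the diagonal entries $X_{i,i}$, $i\in\rbN$, along with the edge of interest) against $g(X_{\tilde{\rbN},\rbN\tilde{\rbN}})$, every entry of which has at least one index in $\tilde{\rbN}$, so swapping $\tilde{\rbN}$ with $\tilde{\rbM}\subset\Nn$ does make that factor $\cG$-measurable with no exceptional cases. This identity is applied in two passes --- first with $\rbN=\{i\}$ to peel off each $X_{i,i}$, then with $\rbN=\{i,j\}$ and $f(X_{\rbN,\rbN}):=f_{i,j}(X_{i,j})$ --- which together yield \eqref{cond_indep_2}. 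To repair your argument you can either adopt this block factorisation, or retain the ``one edge at a time'' strategy but order the edges with diagonals first and prove ``one vs.\ all later'' rather than ``one vs.\ the rest''; then when $v=(i_0,j_0)$ is off-diagonal, the remaining set $E$ contains only off-diagonal edges and thus avoids $(i_0,i_0)$ and $(j_0,j_0)$, so the swap works. The remaining steps of your proposal (the reduction via Lemma \ref{eztkellellenoriznifeltfuggetlenseghez}, and the tail-invariance argument $\condexpect{f(X_{i_0,j_0})}{\cG}=\condexpect{f(X_{i_0,j_0})}{\sigma(X_{\tilde{\Nn},\tilde{\Nn}\cup(\N\setminus V_E')})}$ via \eqref{steiner}) are sound once this decomposition issue is fixed.
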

\begin{proof}
Let $n \in \N$  and $f_{i,j} \in
L^{\infty}(\N_0)$ for all $1 \leq i \leq j \leq n$. By \eqref{defining_eq_of_cond_independence} we need to show
that
\begin{equation}\label{cond_indep_2}
\condexpect{\prod_{1 \leq i \leq j \leq n} f_{i,j}(X_{i,j})
}{X_{\Nn, \Nn  \N}}= \prod_{1 \leq i \leq j \leq n}
\condexpect{f_{i,j}(X_{i,j}) }{X_{\Nn,\Nn  \N}}
\end{equation}

In fact if we show that for all $\rbN,\tilde{\rbN} \subseteq \N$,
 $\rbN \cap
\tilde{\rbN} =\emptyset$, $f \in L^{\infty}(\A_{\rbN, \rbN})$, $g
\in L^{\infty}(\A_{\tilde{\rbN}, \rbN  \tilde{\rbN}})$ we have
\begin{equation}\label{cond_indep_3}
\condexpect{f(X_{\rbN,\rbN}) g(X_{\tilde{\rbN},\rbN  \tilde{\rbN}})}{X_{\Nn,\Nn \N}}=
\condexpect{f(X_{\rbN,\rbN}) }{X_{\Nn,\Nn  \N}}
\condexpect{g(X_{\tilde{\rbN},\rbN  \tilde{\rbN}})}{X_{\Nn,\Nn
 \N}}
\end{equation}
then we are done: in order to prove \eqref{cond_indep_2}
 using
\eqref{cond_indep_3} we first remove the terms corresponding to
the diagonal elements. By repeatedly
applying \eqref{cond_indep_3} with the choice $\rbN=\{i\}$  and $\tilde{\rbN}=[n] \setminus i$
for all
the elements $1 \leq i \leq n$
 we get
\begin{multline}
\condexpect{\prod_{1 \leq i \leq j \leq n} f_{i,j}(X_{i,j})
}{X_{\Nn, \Nn  \N}}=\\
\left(\prod_{i=1}^n \condexpect{f_{i,i}(X_{i,i})
}{X_{\Nn,\Nn \N}}\right)\cdot\condexpect{\prod_{1\leq i<j\leq n } f_{i,j}(X_{i,j})  }{ X_{\Nn,\Nn \N}}
\end{multline}
Then we can factorize the product corresponding to the
non-diagonal part by applying \eqref{cond_indep_3} repeatedly
  with the choice $\rbN=\{ i,j\}$, $\tilde{\rbN}=[n] \setminus \{i,j\}$
and $f(X_{\rbN,\rbN}):=f_{i,j}(X_{i,j})$ for all $1 \leq i <j \leq n$.

By Lemma \ref{eztkellellenoriznifeltfuggetlenseghez} in order to prove \eqref{cond_indep_3} we only need to show that
for all finite $\rbN' \subseteq \N$, $\rbM \subseteq \Nn$,
 and $h \in L^{\infty}(\A_{\rbM, \rbM
\rbN'})$ we have
\begin{multline}
\expect{f(X_{\rbN,\rbN})g(X_{\tilde{\rbN},\rbN
\tilde{\rbN}})
h(X_{\rbM,\rbM  \rbN'})}=
\expect{\condexpect{f(X_{\rbN,\rbN})}{X_{\Nn,\Nn \N}}
g(X_{\tilde{\rbN},\rbN \tilde{\rbN}})h(X_{\rbM,\rbM
\rbN'})}
\end{multline}

$\rbN \cap \rbN'$ and $\tilde{\rbN} \cap \rbN'$ need not be empty. By
increasing the support of $f$ or $g$ we might assume that $\rbN'=\rbN
\cup \tilde{\rbN}$ with retaining the property $\rbN \cap
\tilde{\rbN}=\emptyset$. Thus we need to show
\begin{multline}\label{condexp11}
\expect{f(X_{\rbN,\rbN})g(X_{\tilde{\rbN},\rbN
\tilde{\rbN}})
h(X_{\rbM,\rbM \rbN \tilde{\rbN}})}=
\expect{\condexpect{f(X_{\rbN,\rbN})}{X_{\Nn,\Nn \N}}
g(X_{\tilde{\rbN},\rbN \tilde{\rbN}})h(X_{\rbM,\rbM \rbN \tilde{\rbN}})}
\end{multline}

Let $\tilde{\rbM} \subseteq \Nn$ be such that $\tilde{\rbM} \cap \rbM
=\emptyset$ and $\abs{\tilde{\rbM}}=\abs{\tilde{\rbN}}$. Denote by
$\tau$ the permutation that swaps the elements of $\tilde{\rbN}$ with
those of $\tilde{\rbM}$. Using exchangeability and the fact that $g(X_{\tilde{\rbM},\rbN \tilde{\rbM}})h(X_{\rbM,\rbM \rbN \tilde{\rbM}})$ is
$\sigma\left(X_{\Nn,\Nn \N}\right)$-measurable we get
\begin{multline}
\expect{f(X_{\rbN,\rbN})g(X_{\tilde{\rbN},\rbN
\tilde{\rbN}})
h(X_{\rbM,\rbM \rbN \tilde{\rbN}})}=
\expect{f(X_{\rbN,\rbN})g(X_{\tilde{\rbM},\rbN
\tilde{\rbM}})
h(X_{\rbM,\rbM \rbN \tilde{\rbM}})}=\\
\expect{
\condexpect{ f(X_{\rbN,\rbN}) }{ X_{\Nn,\Nn \N} } g(X_{\tilde{\rbM},\rbN \tilde{\rbM}})
h(X_{\rbM,\rbM \rbN \tilde{\rbM}})}
\end{multline}

Now both $\Nn$ and $\tilde{\Nn}:= \Nn \setminus \tilde{\rbM}$ are countably
infinite sets, so similarly to \eqref{tail_prop} we have
\[\condexpect{f(X_{\rbN,\rbN})}{X_{\Nn,\Nn \N}}=
\condexpect{f(X_{\rbN,\rbN})}{X_{\tilde{\Nn},\tilde{\Nn}
\N}}
\]

Thus by applying $\tau$ again and using exchangeability we get \eqref{condexp11} similarly to the final steps of the proof of Lemma \ref{rows_cond_indep_lemma}.
\end{proof}

\begin{lemma}\label{cond_indep_fapipa}
 If \emph{$\rbN \subseteq \N$}  then \emph{$X_{\rbN,\rbN}$} and $X_{\Nn, \Nn  \N}$
are conditionally independent given \emph{$X_{\Nn,\Nn \rbN}$}.
\end{lemma}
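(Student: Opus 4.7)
The strategy is to reduce the claim to a statement about a single edge. By Lemma \ref{edges_indep_lemma}, given $\sigma(X_{\Nn,\Nn\N})$, the upper-triangular entries $(X_{i,j})_{1\leq i\leq j}$ factor as conditionally independent variables, so for every $A\in\A_{\rbN}$ we have
\[
\condprob{X_{\rbN,\rbN}=A}{X_{\Nn,\Nn\N}}\;=\;\prod_{\substack{i\leq j\\ i,j\in\rbN}}\condprob{X_{i,j}=A(i,j)}{X_{\Nn,\Nn\N}}.
\]
If each single-edge factor is $\sigma(X_{\Nn,\Nn\cup\{i,j\}})$-measurable, the whole product is $\sigma(X_{\Nn,\Nn\rbN})$-measurable; since $\sigma(X_{\Nn,\Nn\rbN})\subseteq\sigma(X_{\Nn,\Nn\N})$, the tower property then gives $\condprob{X_{\rbN,\rbN}=A}{X_{\Nn,\Nn\N}}=\condprob{X_{\rbN,\rbN}=A}{X_{\Nn,\Nn\rbN}}$ for every $A$. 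By linearity and monotone convergence this extends to $\condexpect{f(X_{\rbN,\rbN})}{X_{\Nn,\Nn\N}}=\condexpect{f(X_{\rbN,\rbN})}{X_{\Nn,\Nn\rbN}}$ for every bounded measurable $f$, and Lemma \ref{cond_indep_drop_lemma} then yields the desired conditional independence.

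The remaining core claim is: for every $i\leq j$ in $\N$ and every bounded measurable $f$,
\[
\condexpect{f(X_{i,j})}{X_{\Nn,\Nn\N}}\;=\;\condexpect{f(X_{i,j})}{X_{\Nn,\Nn\cup\{i,j\}}}\quad\text{a.s.}
\]
My plan is to establish this by matching $L^2$-norms via Steiner's theorem \eqref{steiner}. Since $\sigma(X_{\Nn,\Nn\cup\{i,j\}})\subseteq\sigma(X_{\Nn,\Nn\N})$, the $\geq$ direction of the norm inequality is automatic; only the reverse needs work. Approximate $\sigma(X_{\Nn,\Nn\N})$ from below by $\cG_n:=\sigma(X_{\Nn_n,\Nn_n\cup\N_n})$ with $\Nn_n=\{-1,\dots,-n\}$ and $\N_n=\{1,\dots,n\}\supseteq\{i,j\}$; by \eqref{upward}, $\|\condexpect{f(X_{i,j})}{\cG_n}\|_2\to\|\condexpect{f(X_{i,j})}{X_{\Nn,\Nn\N}}\|_2$, so it suffices to uniformly bound the $\cG_n$-norm by $\|\condexpect{f(X_{i,j})}{X_{\Nn,\Nn\cup\{i,j\}}}\|_2$.

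For the uniform bound I adapt the swap argument used in \eqref{tail_prop}. Pick $\tilde\rbM\subseteq\Nn\setminus\Nn_n$ with $|\tilde\rbM|=|\N_n\setminus\{i,j\}|$ and let $\tau$ be the permutation of $\Nn\cup\N$ that swaps the elements of $\N_n\setminus\{i,j\}$ with those of $\tilde\rbM$ and fixes everything else. Then $\tau(i)=i$, $\tau(j)=j$, $\tau(\Nn_n)=\Nn_n$, while $\tau(\Nn_n\cup\N_n)=\Nn_n\cup\{i,j\}\cup\tilde\rbM$, and vertex exchangeability gives
\[
(X_{i,j},X_{\Nn_n,\Nn_n\cup\N_n})\;\sim\;(X_{i,j},X_{\Nn_n,\Nn_n\cup\{i,j\}\cup\tilde\rbM}),
\]
so the corresponding conditional expectations share the same $L^2$-norm. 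The conditioning $\sigma$-algebra on the right sits inside $\sigma(X_{\Nn,\Nn\cup\{i,j\}})$ (both its row- and column-index sets are subsets of $\Nn$ and $\Nn\cup\{i,j\}$, respectively), so contractivity delivers $\|\condexpect{f(X_{i,j})}{\cG_n}\|_2\leq\|\condexpect{f(X_{i,j})}{X_{\Nn,\Nn\cup\{i,j\}}}\|_2$, and letting $n\to\infty$ completes the estimate.

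I expect the main obstacle to be bookkeeping around $\tau$: one must check that its action fixes the pair $\{i,j\}$ and the finite row-set $\Nn_n$ while moving the column-set $\Nn_n\cup\N_n$ into $\Nn\cup\{i,j\}$, so that the final contractivity step lands inside the desired target $\sigma$-algebra. Once this is set up carefully, the whole argument is a direct combination of the product-factorisation from Lemma \ref{edges_indep_lemma}, Steiner's theorem, upward martingale convergence, and a single vertex-exchangeability swap, in exactly the spirit of the techniques used in Lemmas \ref{rows_cond_indep_lemma} and \ref{edges_indep_lemma}.
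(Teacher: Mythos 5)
Your argument is correct, and it takes a genuinely different route from the paper's own proof. The paper attacks Lemma~\ref{cond_indep_fapipa} directly via the criterion of Lemma~\ref{eztkellellenoriznifeltfuggetlenseghez}: it reduces conditional independence to an expectation identity $\expect{f\,g\,h}=\expect{\condexpect{f}{X_{\Nn,\Nn\rbN}}\,g\,h}$ over test functions $f,g,h$ supported on finite index sets, and then verifies that identity by one swap-and-swap-back manipulation using exchangeability, the $\sigma(X_{\Nn,\Nn\rbN})$-measurability of $g\cdot h$, and a \eqref{tail_prop}-style argument. You instead first invoke Lemma~\ref{edges_indep_lemma} to factor $\condprob{X_{\rbN,\rbN}=A}{X_{\Nn,\Nn\N}}$ into a product over single edges of $\rbN$, which reduces the whole lemma to the scalar statement $\condexpect{f(X_{i,j})}{X_{\Nn,\Nn\N}}=\condexpect{f(X_{i,j})}{X_{\Nn,\Nn\cup\{i,j\}}}$ --- essentially the case $|\rbN|\le 2$ of the lemma. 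You then prove this by an $L^2$-norm comparison (Steiner's theorem plus the upward martingale convergence \eqref{upward}), using a single exchangeability swap to inject $\cG_n=\sigma(X_{\Nn_n,\Nn_n\cup\N_n})$ isometrically into $\sigma(X_{\Nn,\Nn\cup\{i,j\}})$. Finally you pass back from indicators to bounded $f$ (the state space $\A_{\rbN}$ is countable, so this is dominated convergence) and apply Lemma~\ref{cond_indep_drop_lemma}. Both proofs pivot on the same swap trick and Steiner bound; the difference is organizational. The paper's version handles the general $\rbN$ at once and does not need Lemma~\ref{edges_indep_lemma} (making Lemma~\ref{cond_indep_fapipa} logically parallel to, rather than downstream of, the edges lemma), while your version is conceptually cleaner and isolates the single-edge conditional independence as the sole new ingredient once Lemma~\ref{edges_indep_lemma} is available. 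One small remark: when you apply \eqref{upward} you should first replace the target random variable by $\condexpect{f(X_{i,j})}{X_{\Nn,\Nn\N}}$ (tower property) so that the upward limit of $\cG_n$ is exactly $\sigma(X_{\Nn,\Nn\N})$ rather than the full $\sigma$-algebra; this is a triviality but worth stating. Your bookkeeping for $\tau$ is correct: $\tau$ fixes $\Nn_n$ and $\{i,j\}$, sends $\N_n\setminus\{i,j\}$ to $\tilde\rbM\subseteq\Nn\setminus\Nn_n$, and hence carries the conditioning $\sigma$-algebra $\cG_n$ into $\sigma(X_{\Nn_n,\Nn_n\cup\{i,j\}\cup\tilde\rbM})\subseteq\sigma(X_{\Nn,\Nn\cup\{i,j\}})$, which is all the contractivity step needs.
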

\begin{proof}
By Lemma \ref{lemma_vegesites} and Lemma \ref{eztkellellenoriznifeltfuggetlenseghez} we only have to show that for all $\rbM \subseteq \Nn$, $\tilde{\rbN} \subseteq \N$,
 $\tilde{\rbN} \cap \rbN=\emptyset$,
 $f \in L^{\infty}(\A_{\rbN, \rbN})$,
$g \in L^{\infty}(\A_{\rbM, \rbM \tilde{\rbN} \rbN  })$,
 $h \in L^{\infty}(\A_{\rbM , \rbM \rbN})$ we have
\begin{multline}\label{condexpect_13}
\expect{f(X_{\rbN,\rbN}) g(X_{\rbM,\rbM \tilde{\rbN} \rbN  })
h(X_{\rbM, \rbM \rbN})}=
\expect{ \condexpect{ f(X_{\rbN,\rbN})}{X_{\Nn,\Nn \rbN}}
 g(X_{\rbM,\rbM \tilde{\rbN} \rbN })
h(X_{\rbM, \rbM \rbN})}
\end{multline}

Choose $\tilde{\rbM} \subseteq \Nn$ such that
$\abs{\tilde{\rbM}}=\abs{\tilde{\rbN}}$ and
 $\tilde{\rbM} \cap \rbM  =\emptyset$.
  By applying a permutation that swaps the elements of $\tilde{\rbN}$ and $\tilde{\rbM}$, using exchangeability
  and the fact that $g(X_{\rbM,\rbM  \tilde{\rbM} \rbN })h(X_{\rbM,
\rbM \rbN})$ is $\sigma \left( X_{\Nn,\Nn \rbN} \right)$-measurable
     we get
\begin{multline}
\expect{f(X_{\rbN,\rbN}) g(X_{\rbM,\rbM  \tilde{\rbN} \rbN })
h(X_{\rbM, \rbM \rbN})}=
\expect{f(X_{\rbN,\rbN}) g(X_{\rbM,\rbM  \tilde{\rbM} \rbN })
h(X_{\rbM, \rbM \rbN})}=\\
\expect{ \condexpect{f(X_{\rbN,\rbN})}{X_{\Nn,\Nn \rbN}}
g(X_{\rbM,\rbM  \tilde{\rbM} \rbN }) h(X_{\rbM,
\rbM \rbN})}
\end{multline}
Let $\tilde{\Nn}=\Nn \setminus \tilde{\rbM}$. Similarly to \eqref{tail_prop} we get
\[\condexpect{f(X_{\rbN,\rbN})}{X_{\Nn,\Nn \rbN}} = \condexpect{f(X_{\rbN,\rbN})}{X_{\tilde{\Nn},\tilde{\Nn} \rbN}},\] from which
\eqref{condexpect_13} follows in a similar fashion as in the previous two lemmas.
\end{proof}

$ $

\begin{proof}[Proof of Theorem \ref{theorem_aldous} \eqref{theorem_aldous_negyvaltozos}]
Our proof follows \cite{aldous_exch}.
We consider the extended process $X_{\Nn  \N, \Nn \N}$.

 Let $\alpha$, $\left(U_{i}\right)_{i \in \N}$,
$\left(\beta_{i,j} \right)_{1 \leq i\leq j }$ be jointly i.i.d. $U
\lbrack 0, 1 \rbrack$ random variables independent from $X_{\Nn  \N, \Nn \N}$.

Pick $i_0, j_0 \in \N$, $i_0 < j_0$.
 By Lemma \ref{codinglemma} \eqref{codinglemma3} there exists a
 \[h: \A_{\Nn,\Nn} \times \N_0^{\Nn} \times \N_0^{\Nn} \times [ 0, 1
 ] \to \N_0
  \] such
that $h(z,x_1,x_2,b)\equiv h(z,x_2,x_1,b)$ and
 putting
\begin{equation*}
 X^*_{i_0,j_0}=h \big(X_{\Nn,\Nn}, X_{i_0,\Nn}, X_{j_0,\Nn},
\beta_{i_0,j_0}\big)
\end{equation*}
 we have
\begin{equation} \label{h_distribution_property}
\big(X_{\Nn,\Nn}, X_{i_0,\Nn}, X_{j_0,\Nn},
X^*_{i_0,j_0} \big) \sim \big(X_{\Nn,\Nn}, X_{i_0,\Nn},
X_{j_0,\Nn}, X_{i_0,j_0} \big).
\end{equation}
  For $i \neq j \in \N$ we define
\begin{equation}\label{maszlag}
 X^*_{i,j}:=h\big(
X_{\Nn,\Nn},X_{i,\Nn}, X_{j,\Nn}, \beta_{\min(i,j) , \max(i,j)} \big)
\end{equation}

 By Lemma \ref{codinglemma} \eqref{codinglemma2}
there exists a $\bar{h}$ such that putting
\[ X^*_{i_0,i_0}=\bar{h} \big(X_{\Nn,\Nn}, X_{i_0,\Nn},
\beta_{i_0,i_0}\big) \] we have
\[ \big(X_{\Nn,\Nn}, X_{i_0,\Nn},
X^*_{i_0,i_0} \big) \sim \big(X_{\Nn,\Nn}, X_{i_0,\Nn},
X_{i_0,i_0} \big)\]  For $i \in \N$, we define
\begin{equation}
\label{maszlag2} X^*_{i,i}:=\bar{h}\big(
X_{\Nn,\Nn},X_{i,\Nn}, \beta_{i,i} \big)
\end{equation}

Having defined $X^*_{i,j}$ for all $i,j \in \N$ we use
Lemma \ref{identificationlemma} to prove
\begin{equation}\label{futunk}
(X_{\Nn ,\Nn \N},  X_{\N,\N}) \sim
(X_{\Nn ,\Nn \N}, X^*_{\N,\N})
\end{equation}

 $\left(X_{i,j} \right)_{1\leq i \leq j}$ are
conditionally independent given $X_{\Nn,\Nn \N}$ by Lemma
\ref{edges_indep_lemma}.  $\left(X^*_{i,j} \right)_{1\leq i \leq j}$ are
conditionally independent given $X_{\Nn,\Nn \N}$ since $\left(\beta_{i,j} \right)_{1\leq i \leq j}$ are
independent from each other and $X_{\Nn \N, \Nn \N}$. Thus we only need to prove that for each $i,j \in
\N$ the random variables $X_{i,j}$ and $X^*_{i,j}$ are
conditionally identically distributed given $X_{\Nn, \Nn \N}$.
 We prove this when $i \neq j$, the
proof of the diagonal case is similar.

Let $\tau$ denote a
permutation of $\N$ such that $\tau(\{i_0,j_0\})=\{i,j\}$. By
relabeling (and using $X_{i,j}= X_{j,i}$, $X^*_{i,j}=X^*_{j,i}$) we might assume that $\tau(i_0)=\tau(i)$ and
$\tau(j_0)=\tau(j)$ and that $\tau=\tau^{-1}$. We extend
$\tau$ to $\Nn \cup \N$ in such a way that $\forall k \in \Nn \;
\tau(k)=k$.
\begin{align*}
&\big( X_{\Nn,\Nn}, X_{i,\Nn}, X_{j,\Nn},
X^*_{i,j} \big) \stackrel{\eqref{maszlag}}{=} \\
&\big( X_{\Nn,\Nn}, X_{i,\Nn}, X_{j,\Nn},
h\big(
X_{\Nn,\Nn},X_{i,\Nn}, X_{j,\Nn}, \beta_{\min(i,j) , \max(i,j)} \big) \big) \stackrel{\eqref{vertex_exch}}{\sim} \\
&\big( X_{\Nn,\Nn}, X_{i_0,\Nn}, X_{j_0,\Nn},
h\big(
X_{\Nn,\Nn},X_{i_0,\Nn}, X_{j_0,\Nn}, \beta_{\min(i,j) , \max(i,j)} \big)
 \big) \sim \\
 &\big( X_{\Nn,\Nn}, X_{i_0,\Nn}, X_{j_0,\Nn},
h\big(
X_{\Nn,\Nn},X_{i_0,\Nn}, X_{j_0,\Nn}, \beta_{i_0 , j_0} \big)
 \big) \stackrel{\eqref{maszlag}}{=} \\
 &\big( X_{\Nn,\Nn}, X_{i_0,\Nn}, X_{j_0,\Nn},
X^*_{i_0,j_0}
 \big) \stackrel{\eqref{h_distribution_property}}{\sim} \\
 &\big( X_{\Nn,\Nn}, X_{i_0,\Nn}, X_{j_0,\Nn},
X_{i_0,j_0}
 \big) \stackrel{\eqref{vertex_exch}}{\sim} \\
& \big( X_{\Nn,\Nn}, X_{i,\Nn}, X_{j,\Nn},
X_{i,j} \big)
\end{align*}

As a consequence we get for all $f \in L^{\infty}(\N_0)$
\begin{equation}\label{michaeljacksonisdead}
\condexpect{f(X_{i,j})}{X_{\Nn,\Nn \rbN}}=\condexpect{f(X^*_{i,j})}{X_{\Nn,\Nn \rbN}}
\end{equation}
with $\rbN=\{i,j\}$.
Now using Lemmas \ref{cond_indep_drop_lemma}, \ref{cond_indep_fapipa}
 and the fact that $\beta_{\min(i,j),\max(i,j)}$ is independent
form $X_{\Nn \N, \Nn \N}$  we get
\begin{multline*}
\condexpect{f(X_{i,j})}{X_{\Nn, \Nn \N}}=
\condexpect{f(X_{i,j})}{X_{\Nn, \Nn \N}, X_{\Nn, \Nn \rbN} } \stackrel{\eqref{cond_indep_drop_formula}}{=}
\condexpect{f(X_{i,j})}{X_{\Nn,\Nn \rbN}}\stackrel{\eqref{michaeljacksonisdead}}{=} \\
\condexpect{f(X^*_{i,j})}{X_{\Nn,\Nn \rbN}}\stackrel{\eqref{cond_indep_drop_formula}}{=}
\condexpect{f(X^*_{i,j})}{X_{\Nn, \Nn \N}, X_{\Nn, \Nn \rbN} }=
\condexpect{f(X^*_{i,j})}{X_{\Nn, \Nn \N}}
\end{multline*}
which proves that $X_{i,j}$ and $X^*_{i,j}$ are
conditionally identically distributed given $X_{\Nn, \Nn \N}$.

Having established \eqref{futunk} the next
step is to code $X_{i,\Nn}$ for each $i \in \N$. By Lemma \ref{codinglemma} \eqref{codinglemma2}
 there exists $g$ such that, putting
$X^*_{i_0,\Nn}:=g(X_{\Nn,\Nn}, U_{i_0})$ we have
\begin{equation*}
 \big(X_{\Nn,\Nn},X^*_{i_0,\Nn} \big) \sim \big(X_{\Nn,\Nn},X_{i_0,\Nn} \big)
\end{equation*}
For each $i \in \N$ define
\begin{equation}
 \label{tralala}
X^*_{i,\Nn}:=g(X_{\Nn,\Nn}, U_i)
\end{equation}
 Then
\[\big(X_{\Nn,\Nn},X^*_{i,\Nn} \big) \sim
\big(X_{\Nn,\Nn},X_{i,\Nn} \big)\] since neither distribution
depends on $i$.

 We now want to use Lemma \ref{identificationlemma} to prove
 \begin{equation} \label{butyok}
 \big(X_{\Nn,\Nn}, X^*_{i,\Nn} \big)_{i \in \N} \sim
\big(X_{\Nn,\Nn},X_{i,\Nn} \big)_{i \in \N}.
\end{equation}
$\big(X^*_{i,\Nn} \big)_{i \in \N}$ are conditionally
independent given $X_{\Nn,\Nn}$ by since $\left(U_i\right)_{i \in \N}$ are independent from each other and
$X_{\Nn \N, \Nn \N}$.
$\big(X_{i,\Nn} \big)_{i \in \N}$ are conditionally independent
given $X_{\Nn,\Nn}$ by Lemma \ref{rows_cond_indep_lemma}, thus
\eqref{butyok}.

Putting together  \eqref{maszlag}, \eqref{maszlag2}, \eqref{futunk},\eqref{tralala} and  \eqref{butyok}  we see
that $(X_{\Nn,\Nn}, X_{\N,\N})\sim (X_{\Nn,\Nn}, \ddot{X}_{\N,\N})$ where
\begin{equation}\label{ddX}
\ddot{X}_{i,j}:=
\begin{cases}
h \big(X_{\Nn,\Nn}, g(X_{\Nn,\Nn}, U_i), g(X_{\Nn,\Nn},
U_j), \beta_{\min(i,j),\max(i,j)} \big) & \text{if } i\neq j \\
\bar{h}\big( X_{\Nn,\Nn}, g(X_{\Nn,\Nn}, U_i),
\beta_{i,i} \big) & \text{if } i= j
\end{cases}
\end{equation}
Finally, code $X_{\Nn,\Nn}$ as a function of $\alpha$ using Lemma \ref{codinglemma} \eqref{codinglemma1} and the
desired representation \eqref{representation_formula_f} is established.
\end{proof}

\subsection{Dissociated arrays}

Now we show how a special form \eqref{representof_exch_dissoc}
 of the general representation \eqref{representation_formula_f}
correspond to the dissociated property (Definition \ref{def_dissociated}) of the vertex
exchangeable array
 $X_{\N,\N}$.

 First we define various $\sigma$-algebras on the measurable space $\left(\A_\N, \mathcal{F} \right)$.

   Let ${\mathcal{S}_n\subset \mathcal{F}}$ denote the $\sigma$-algebra generated by the events that are invariant under the relabeling of the first $n$ nodes. So an $\mathcal{F}$-measurable function $g: \A_{\N} \to \R$ is $\mathcal{S}_n$-measurable iff it satisfies
   \begin{equation}\label{symmetric_function_formula}
 \forall\;  A\in \A_{\N}:\, g\left( (A(i,j))_{i,j=1}^{\infty}\right) = g\left( (A(\tau(i),\tau(j)))_{i,j=1}^{\infty}\right)
   \end{equation}
     for every permutation $\tau :\, \N\to\N$  satisfying $\tau(n')=n'$ for every $n'>n$. Clearly $\mathcal{S}_n\supseteq \mathcal{S}_{n+1}$. Define the \emph{ $\sigma$-algebra of symmetric events} $\mathcal{S}_{\infty}$ by
    \begin{equation}\label{def_symmetric_sigma_algebra}
\mathcal{S}_{\infty}:=\bigcap_{n=1}^{\infty} \mathcal{S}_n
    \end{equation}
  Alternatively, a function $g$ is $\mathcal{S}_{\infty}$-measurable iff \eqref{symmetric_function_formula} holds for any finitely supported permutation $\tau$. Define the $\sigma$-algebras
  \begin{equation}\label{FG_sigma_algebras}
  \mathcal{F}_n:=\sigma \left( \big(A(i,j)\big)_{i,j=1}^n \right) \qquad \mathcal{G}_n:=
   \sigma \left( \big( A(i,j) \big)_{i,j=n+1}^{\infty} \right)
  \end{equation}
  $\mathcal{F}_1 \subseteq \mathcal{F}_2 \subseteq \dots$ and $\mathcal{G}_1 \supseteq \mathcal{G}_2 \supseteq \dots$.
  The \emph{ $\sigma$-algebra of tail events} $\mathcal{G}_{\infty}$ is defined by
 \begin{equation}\label{def_tail_sigma_algebra}
\mathcal{G}_{\infty}:=\bigcap_{n=1}^{\infty} \mathcal{G}_n
    \end{equation}
  It is easy to see that $\mathcal{G}_n \subseteq \mathcal{S}_n$, which implies $\mathcal{G}_{\infty} \subseteq \mathcal{S}_{\infty}$.

\begin{theorem}\label{theorem_dissociated_equivalent}
The following properties are equivalent for a vertex exchangeable
array $X_{\N,\N}$:
\begin{enumerate}
\item \label{sym_triv}  $\mathcal{S}_{\infty}$ is trivial, i.e. it contains only events with probability $0$ or $1$.
\item \label{tail_triv}  $\mathcal{G}_{\infty}$ is trivial, i.e. it contains only events with probability $0$ or $1$.
\item \label{dissociated} $X_{\N,\N}$ is dissociated.
\item \label{no_alpha} There exists a measurable function $g:[0,1]^3 \to \N_0$
  such that
  \begin{equation}\label{g_no_alpha_symmetric}
  g(u_1,u_2,b)\equiv g(u_2,u_1,b)
  \end{equation}
   and defining
\begin{equation}\label{no_alpha_formula}
\tilde{X}_{i,j}=g \big( U_i,U_j,\beta_{\min(i,j) ,\max(i,j)} \big)
\end{equation}
we have $\tilde{\mathbf{X}} \sim \mathbf{X}$.
\end{enumerate}
\end{theorem}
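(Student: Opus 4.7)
The plan is to prove the cyclic chain $(4) \Rightarrow (3) \Rightarrow (2) \Rightarrow (1) \Rightarrow (4)$. The first two implications are routine. For $(4) \Rightarrow (3)$, the representation $\tilde X_{i,j} = g(U_i, U_j, \beta_{\min(i,j),\max(i,j)})$ exhibits $\mathcal{F}_n$ as a measurable function of the independent family $(U_1,\dots,U_n)$ together with $(\beta_{i,j})_{1 \leq i \leq j \leq n}$, while every entry with both indices exceeding $n$ is a function of the disjoint family $(U_{n+1},U_{n+2},\dots)$ and $(\beta_{i,j})_{n < i \leq j}$, giving the desired independence. For $(3) \Rightarrow (2)$, letting $m \to \infty$ in Definition \ref{def_dissociated} yields $\mathcal{F}_n \perp \mathcal{G}_n$ for every $n$; since $\mathcal{G}_\infty \subseteq \mathcal{G}_n$, the tail $\sigma$-algebra $\mathcal{G}_\infty$ is independent of each $\mathcal{F}_n$, hence of $\sigma(\bigcup_n \mathcal{F}_n) = \mathcal{F}$, so every $A \in \mathcal{G}_\infty \subseteq \mathcal{F}$ satisfies $\probp(A) = \probp(A)^2 \in \{0,1\}$.

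For the main step $(2) \Rightarrow (1)$, the plan is to show $L^2(\mathcal{S}_\infty) \subseteq L^2(\mathcal{G}_m)$ for every $m$, so that the downward limit \eqref{downward} combined with (2) forces $f = \condexpect{f}{\mathcal{G}_\infty} = \expect{f}$ a.s.\ for every $f \in L^2(\mathcal{S}_\infty)$, and hence $\probp(A) \in \{0,1\}$ for all $A \in \mathcal{S}_\infty$. Given $m \in \N$ and $\varepsilon > 0$, by the upward limit \eqref{upward} choose $n > m$ with $\|f - \condexpect{f}{\mathcal{F}_n}\|_2 < \varepsilon$; let $\tau$ be the transposition swapping the blocks $\{1,\dots,n\}$ and $\{n+1,\dots,2n\}$ pointwise (identity elsewhere); and define $g := \condexpect{f}{\mathcal{F}_n} \circ \tau$. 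Since $\tau \in \mathrm{Sym}(\{1,\dots,2n\})$, the $\mathcal{S}_{2n}$-measurability of $f$ gives $f \circ \tau = f$ almost surely, and exchangeability $X \sim X \circ \tau$ yields
\[\|f - g\|_2 = \bigl\|(f - \condexpect{f}{\mathcal{F}_n}) \circ \tau\bigr\|_2 = \|f - \condexpect{f}{\mathcal{F}_n}\|_2 < \varepsilon,\]
while $g$ depends on $X$ only through $\{X(i,j) : i,j \in \{n+1,\dots,2n\}\}$, so $g \in \mathbf{m}\mathcal{G}_n \subseteq \mathbf{m}\mathcal{G}_m$.

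For $(1) \Rightarrow (4)$, apply Theorem \ref{theorem_aldous} \eqref{theorem_aldous_negyvaltozos} to get $\tilde X_{i,j} = f(\alpha, U_i, U_j, \beta_{\min,\max})$ with $\tilde X \sim X$, and eliminate $\alpha$ as follows. Conditional on $\alpha$, the $\alpha$-slice of $\tilde X$ is of the form \eqref{no_alpha_formula}, hence by the cycle $(4) \Rightarrow (3) \Rightarrow (2) \Rightarrow (1)$ just established applied conditionally, is a.s.\ dissociated with a.s.\ trivial conditional $\mathcal{S}_\infty$. For any bounded measurable $h$ the empirical average $N^{-2}\sum_{i,j \leq N} h(\tilde X_{i,j})$ is asymptotically invariant under finitely supported permutations (only $O(M/N)$-many terms are affected by a permutation with support in $[M]$), so its a.s.\ limit is $\mathcal{S}_\infty$-measurable as a function of $\tilde X$; by the SLLN for dissociated arrays this limit equals $\condexpect{h(\tilde X_{1,2})}{\alpha}$ a.s., and assumption (1) forces it to equal $\expect{h(\tilde X_{1,2})}$ a.s. Carrying this out for all finite-dimensional marginals shows that the conditional law of $\tilde X$ given $\alpha$ a.s.\ coincides with the unconditional law; fixing any $\alpha_0$ in the resulting full-measure set, $g(u_1, u_2, b) := f(\alpha_0, u_1, u_2, b)$ gives the representation \eqref{no_alpha_formula}, with the required symmetry inherited from $f$.

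The main obstacle is the step $(2) \Rightarrow (1)$ itself, requiring the interplay of the exchangeable transposition $\tau$ with the upward/downward $L^2$-martingale convergences; a secondary delicate point is the SLLN/ergodic-type argument used in $(1) \Rightarrow (4)$ to show that the conditional law of $\tilde X$ given $\alpha$ does not a.s.\ depend on $\alpha$.
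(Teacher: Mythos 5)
Your proof is correct, and the cyclic chain $(4)\Rightarrow(3)\Rightarrow(2)\Rightarrow(1)\Rightarrow(4)$ gives all four equivalences. The first three implications coincide with the corresponding directions in the paper: $(4)\Rightarrow(3)$ is immediate from the form \eqref{no_alpha_formula}; $(3)\Rightarrow(2)$ is Kolmogorov's $0$--$1$ argument, exactly as in the paper; and your $(2)\Rightarrow(1)$ step (showing $L^2(\mathcal{S}_\infty)\subseteq L^2(\mathcal{G}_m)$ via the block swap $\tau$, upward/downward martingale convergence and Steiner's identity) is the same mechanism the paper uses to establish that $\mathcal{S}_\infty$ and $\mathcal{G}_\infty$ are essentially equal. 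Where you genuinely diverge is the final implication. The paper proves $(3)\Rightarrow(4)$: it extends the dissociated array to $\Nn\cup\N$, observes that $X_{\Nn,\Nn}$ is independent of $X_{\N,\N}$, and concludes directly from the $\ddot X$-construction of Theorem \ref{theorem_aldous} that the resulting $\tilde X$ is independent of $\alpha$, so almost any slice $\alpha_0$ works. You instead prove $(1)\Rightarrow(4)$ by an ergodic/reverse-martingale argument: apply Theorem \ref{theorem_aldous}\eqref{theorem_aldous_negyvaltozos}, note that conditionally on $\alpha$ the array is of $g$-form (hence dissociated with a.s.\ trivial conditional $\mathcal{S}_\infty$ by the implications already proved), and then use the reverse martingale for empirical symmetric averages together with the unconditional triviality from (1) to show the conditional law of $\tilde X$ given $\alpha$ is a.s.\ constant. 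Both routes need Theorem \ref{theorem_aldous}\eqref{theorem_aldous_negyvaltozos} and then argue that $\alpha$ can be discarded; the paper's independence argument is shorter and more structural, while yours is a conditional law-of-large-numbers argument that makes explicit the ergodic content of assumption (1). One small remark: when you justify that the empirical average is $\mathcal{S}_\infty$-measurable you say only $O(M/N)$ terms are affected by a permutation supported in $[M]$, but in fact for $N\geq M$ a permutation of $[M]$ restricts to a bijection of $[N]\times[N]$, so the sums are \emph{exactly} equal; the conclusion is the same but the stated reason is weaker than what actually holds.
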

The equivalence of \ref{sym_triv}. and \ref{dissociated}. is proved in \cite{Lovasz_Szegedy_2009}, Proposition 3.6 using
a different terminology (and proof): a consistent countable
random graph model is \emph{local} if and only if it is \emph{ergodic}.

\begin{proof}[ Proof of \ref{sym_triv}. $\iff$ \ref{tail_triv} ]

$ $

In fact we prove that  $\mathcal{S}_{\infty}$ and  $\mathcal{G}_{\infty}$ are
essentially the same. This is a version of the Hewitt-Savage
theorem (\cite{shiryaev}, page 382).
 The inclusion $\cG_{\infty}
\subseteq \mathcal{S}_{\infty}$ is obvious. Conversely we show that any
bounded  $\cS_{\infty}$-measurable random variable is essentially
$\cG_{\infty}$-measurable. It is enough to show that for any $f \in
L^2 \left( \A_{\N}, \mathcal{\cS_{\infty}}, \probp_X \right)$  we have
\begin{equation}\label{symmetric_tail_are_the_same_formula}
 \condexpect{f \left(X_{\N,\N} \right)}{\cG_{\infty}}=f \left(X_{\N,\N} \right)
 \end{equation}
Recall the definitions of the $\sigma$-algebras $\cF_n$ and $\cG_n$ in \eqref{FG_sigma_algebras}.
In order to prove \eqref{symmetric_tail_are_the_same_formula} it is enough to show that
\begin{equation}\label{tralala13}
 \forall \, \varepsilon>0\;,\forall \, m  \quad
\expect{ \left( \condexpect{f(X_{\N,\N})}{ \cG_m} -f(X_{\N,\N}) \right)^2 } <\varepsilon.
\end{equation}

Fix $\varepsilon>0$ and $m$. It follows
from \eqref{vegesites_eq} and \eqref{upward}
 that there exists an $n \in \N$
such that defining $\rbN:=[n]$ we have
\[\expect{ \left(
\condexpect{f(X_{\N,\N})}{X_{\rbN, \rbN}} -f(X_{\N,\N})
\right)^2 } <\varepsilon \] We might assume $m \leq n$.
Thus by \eqref{measurability_trick} there is a function $g: \A_n \to \R$ such that
\[\expect{ \left(
g(X_{\rbN, \rbN}) -f(X_{\N,\N}) \right)^2 } <\varepsilon.\]
 Now take $\tilde{\rbN} \subseteq \N$, $\abs{\tilde{\rbN}}=\abs{\rbN}=n$, $\tilde{\rbN} \cap \rbN=\emptyset$
 and a permutation $\tau$ that swaps the elements of $\rbN$ with those of $\tilde{\rbN}$.
Since $f$ is $\cS_{\infty}$-measurable we have
$f\left( \left( X_{i,j} \right)_{i,j=1}^{\infty} \right)=  f\left( \left( X_{\tau(i),\tau(j)} \right)_{i,j=1}^{\infty} \right)$.
If we combine this with the vertex-exchangeability of $X_{\N,\N}$  we get
\[\expect{ \left(
g(X_{\tilde{\rbN}, \tilde{\rbN}}) -f(X_{\N,\N}) \right)^2 } <\varepsilon.\]
Now $\tilde{\rbN} \subseteq \{m+1,m+2,\dots \}$, thus  $g(X_{\tilde{\rbN}, \tilde{\rbN}})$
is $\cG_m$-measurable,
and by the conditional version of Steiner's inequality \eqref{steiner} we get
\begin{equation*}
\expect{ \left( \condexpect{f(X_{\N,\N})}{\cG_m} -f(X_{\N,\N}) \right)^2 } \leq \expect{
\left( g(X_{\tilde{\rbN}, \tilde{\rbN}}) -f(X_{\N,\N}) \right)^2 }
<\varepsilon.
\end{equation*}
Thus \eqref{tralala13} is established.
\end{proof}

\begin{proof}[Proof of \ref{tail_triv}. $\iff$ \ref{dissociated}]

$ $

This proof is based on the proof of Theorem 5.5 in
\cite{diaconis_janson}.

First we prove   \ref{tail_triv}. $\implies$ \ref{dissociated}.
 In order to prove that $X_{\N,\N}$ is dissociated we only need to show
  that for all $\varepsilon>0$, for all finite subsets $\rbN, \rbM \subseteq \N$  such that
   $\rbM \cap \rbN   =\emptyset$ and for all $f \in L^{\infty}(\A_{\rbM, \rbM})$ and
$g \in L^{\infty}(\A_{\rbN, \rbN})$ we have
\begin{equation}\label{tralala14}
\left| \expect{
f(X_{\rbM,\rbM})g(X_{\rbN,\rbN})}- \expect{
f(X_{\rbM,\rbM})} \expect{g(X_{\rbN,\rbN})} \right| \leq \varepsilon
\end{equation}
We might assume $\|g\|_{\infty}\leq 1$. Since $\cG_{\infty}$ is
trivial, by \eqref{downward} there is an $n \in \N$ such that
\begin{equation}\label{badacsonyors}
 \expect{ \left|
 \condexpect{f(X_{\rbM,\rbM})}{ \cG_n}-\expect{f(X_{\rbM,\rbM})}
    \right| } \leq \varepsilon
    \end{equation}
We might assume $\rbM \cup \rbN \subseteq [n]$.
Let $\tilde{\rbN} \subseteq \{n+1,n+2,\dots \}$,
 $\abs{ \tilde{\rbN}}=\abs{\rbN}$. By exchangeability we have
\begin{multline*}
\expect{ f(X_{\rbM,\rbM})g(X_{\rbN,\rbN})}- \expect{
f(X_{\rbM,\rbM})} \expect{g(X_{\rbN,\rbN})}=\\
\expect{ f(X_{\rbM,\rbM})g(X_{\tilde{\rbN},\tilde{\rbN}})}- \expect{
f(X_{\rbM,\rbM})} \expect{g(X_{\tilde{\rbN},\tilde{\rbN}})}=\\
\expect{\left[\condexpect{f(X_{\rbM,\rbM})}{\cG_n}-\expect{f(X_{\rbM,\rbM})}\right]g(X_{\tilde{\rbN},\tilde{\rbN}})}
\end{multline*}
Now \eqref{tralala14} follows from $\|g\|_{\infty}\leq 1$ and \eqref{badacsonyors}.

The proof of \ref{dissociated}. $\implies$  \ref{tail_triv}. is similar to that of Kolomorov's 0-1 law (\cite{williams}, Theorem 4.11):
 From the dissociated property of $\probp_X$ it follows that for all $n \in \N$ the $\sigma$-algebras
$\cF_n$ and $\cG_n$ are independent. From standard approximation arguments we get that $\cF=\sigma\left(\bigcup_{n=1}^{\infty} \cF_n\right)$
and $\cG_{\infty}=\bigcap_{n=1}^{\infty} \cG_n$ are also independent, thus $\cG_{\infty}$ is independent from itself, thus it can only contain
events of probability $0$ or $1$.
\end{proof}

\begin{proof}[ Proof of \ref{dissociated}. $\iff$ \ref{no_alpha}]

$ $

This proof is based on the proof of Proposition 3.3 in
\cite{aldous_exch}.

The proof of \ref{no_alpha}. $\implies$ \ref{dissociated}. is trivial since an array $\tilde{\mathbf{X}}$
of form \eqref{no_alpha_formula} is dissociated.

 As for
the other direction: It is easy to see that the extended array
$X_{ \Nn \N, \Nn \N}$ is also dissociated from which the
independence of $X_{\Nn,\Nn}$ and $X_{\N,\N}$ follows.

The construction in the proof of Theorem \ref{theorem_aldous}
(see \eqref{ddX}) gave an array $\ddot{X}_{\N,\N}$ of the form
\[\ddot{X}_{i,j}=\hat{f}(X_{\Nn,\Nn},
U_i,U_j,\beta_{\min(i,j),\max(i,j)} )\]
 which was shown to satisfy
$ \big( X_{\Nn,\Nn}, X_{\N,\N}\big) \sim \big( X_{\Nn,\Nn},
\ddot{X}_{\N,\N} \big)$.
 But $X_{\Nn,\Nn}$ and $X_{\N,\N}$
are independent, thus the final step of the proof of Theorem
\ref{theorem_aldous} (coding $X_{\Nn,\Nn}$ as a function of
$\alpha$) gives a representation \eqref{representation_formula_f} in
which $\tilde{X}_{\N,\N}$ is independent of $\alpha$.
Thus it holds for almost any $a \in [0,1]$ that the random variables
$\left( f(a,U_i,U_j, \beta_{\min(i,j),\max(i,j)}) \right)_{i,j=1}^{\infty}$ have the same distribution as $X_{\N,\N}$.
 Defining \[g(u_1,u_2,b):=f(a,u_1,u_2,b)\] for any such $a$ gives the desired representation \eqref{no_alpha_formula}.

\end{proof}

\section{Proof of Theorem \ref{theorem_main_result}}\label{section_proofs}

\subsection{Proof of Theorem \ref{theorem_main_result}: $(a)\Rightarrow (b)$}
The idea that Theorem \ref{theorem_aldous} can be applied to prove $(a)\Rightarrow (b)$ comes from \cite{diaconis_janson}.

  If $f\in \T$, then by Lemma \ref{lemma_graphkonv_konvindist} there exists a multigraph sequence $\left( G_n \right)_{n=1}^{\infty}$ and
  a random array $\mathbf{X}$
such that $\rndarray{G_n}{} \toindis \rndarray{}{}$ and $\prob{\forall i,j\leq k: A(i,j) \leq X(i,j)} = f(A)$ holds.
    \rndarray{G_n}{} is vertex exchangeable and dissociated for all $n$ so its limit \rndarray{}{} is also vertex exchangeable and dissociated.
Thus by the \ref{dissociated}. $\implies$ \ref{no_alpha}. implication of Theorem \ref{theorem_dissociated_equivalent}  we get that
$\mathbf{X} \sim \tilde{\mathbf{X}}$ where $\tilde{\mathbf{X}}$ is defined by \eqref{no_alpha_formula}. Define the multigraphon $W$ by
\[ W(x,y,k):=\int_0^1 \ind[ g(x,y,b)=k] \, \text{d} b =\prob{g(x,y,\beta)=k} \]
where $\beta \sim U[0,1]$.  $W$ is indeed a multigraphon: \eqref{W_symetric_eq} follows from \eqref{g_no_alpha_symmetric}, and
\eqref{W_nondefective_eq} and \eqref{W_diag_even} follow from the fact that $\mathbf{X}$ is a random element of $\A_{\N}$ (see \eqref{A_N_def_formula}).
From \eqref{no_alpha_formula}, Definition \ref{def_X_W} and Lemma \ref{identificationlemma} it follows that
 $\left( \left(U_i \right)_{i \in \N},  \tilde{\mathbf{X}} \right) \sim
 \left( \left(U_i \right)_{i \in \N},  \mathbf{X}_W \right)$.
      Thus for any $F \in \M$ with adjacency matrix $A$ we have
\begin{multline*}
 f(A)=\prob{\forall i,j\leq k: A(i,j) \leq \tilde{X}(i,j)}=\\ \prob{\forall i,j\leq k: A(i,j) \leq X_W(i,j)} \stackrel{\eqref{homsur_grafon_valszamosan}}{=}  \homsur{F}{W}
 \end{multline*}

\subsection{Proof of Theorem \ref{theorem_main_result}: $(b)\Rightarrow (c)$}

Suppose that there exists a multigraphon $W$ for which $f(\,\cdot\,)=\homsur{\,\cdot\,}{W}$.
 It is easy to  check that the graph parameter \homsur{\,\cdot\,}{W} is normalized, multiplicative and non-defective.

  In order to prove that $f$ is reflection positive we only need to show that the connection matrices $M(k,f)$ are positive semidefinite for each $k\geq 0$. Let us fix $k$. Suppose that the finite minor of $M(k,f)$ is indexed by the $k$-labeled multigraphs $F_1, F_2, \dots, F_N$. We label the unlabeled vertices of $F_1, \dots, F_N$ using disjoint subsets of $\N$, so that the adjacency matrix $A_m \in \A_{V(F_m)}$  and for all $l,m \in [N]$, $l\neq m$ we have $V(F_l)\cap V(F_m)=[k]$.

Recalling Definition \ref{def_X_W} we define the random variables $Y_1,\dots,Y_N$ by
 \[Y_m := \ind \left[ \forall \, i,j \in V(F_m)  :  A_m(i,j) \leq X_W(i,j)   \right]. \]
\begin{multline*}
\left( M(k,f)\right)(F_l, F_m) = f(F_lF_m)
  = \homsur{F_l F_m}{W}
   \stackrel{\eqref{adjmatrix_of_F1F2}}{=} \homsur{A_l \vee A_m}{W} \stackrel{ \eqref{homsur_grafon_valszamosan}}{=}\\
    \expect{ \ind \left[ \forall \, i,j \in V(F_l)\cup V(F_m) :  (A_l \vee A_m)(i,j) \leq X_W(i,j)  \right] } =
    \expect{Y_l\cdot Y_m}
\end{multline*}
The finite minors of $M(k,f)$ are positive semidefinite because for any $v:[N]\rightarrow \R$ we have
\begin{equation*}
  v^T\cdot M(k,f)\cdot v = \sum_{l,m=1}^N \expect{Y_l\cdot Y_m}\cdot v_l v_m = \expect{\left( \sum_{m=1}^N v_m\cdot Y_m\right)^2} \geq 0.
\end{equation*}

\subsection{Proof of Theorem \ref{theorem_main_result}: $(c)\Rightarrow (d)$}

The proof follows the main idea of \cite{Lovasz_Szegedy_2006}.
 We prove that if $f$ is reflection positive then $f^{\dagger} \geq 0$.

 Let us fix $k\in \N$. Let $F_1$ and $F_2$ be $k$-labeled graphs with no unlabeled nodes (thus $\abs{V(F_1)}=\abs{V(F_2)}=k$). By \eqref{adjmatrix_of_F1F2} the adjacency matrix of $F_1F_2$ is
 \[ (A_1\vee A_2)(i,j)= \max \left( A_1(i,j), A_2(i,j) \right) \]
    We introduce matrices whose rows and columns are indexed by adjacency matrices from $\A_k$. Define $M, Z, D\in \R^{\A_k \times\A_k}$ by
\begin{align*}
M(A_1, A_2) &:= f(A_1\vee A_2) \\
Z(A_1, A_2) &:= \indd{A_1 \leq A_2} \\
D(A_1, A_2) &:= \indd{A_1 = A_2}\cdot f^{\dagger}(A_1)
\end{align*}
It follows from $(c)$ that $M$ is positive semidefinite since it is a symmetric minor of the connection matrix $M(k,f)$.
 We show that $M=ZDZ^T$:
\begin{align*}
\left(ZDZ^T\right)(A,B) &= \sum\limits_{E,F\in \A_k}\!\! Z(A,E)\cdot D(E,F)\cdot Z^T(F,B) \\
  &= \sum\limits_{E \in \A_k} \indd{A\leq E} f^{\dagger}(E) \indd{B\leq E}
  = \!\! \sum\limits_{A\vee B\leq E} f^{\dagger}(E)  \\
  &= \left( f^{\dagger}\right)^{-\dagger}(A\vee B) \stackrel{(\ast)}{=} f(A\vee B) = M(A,B)
\end{align*}
The equation $(\ast)$ follows from the assumption $f(\infty)=0$ and \eqref{mobius_inverzios_tetel_eq}.

In order to prove that $f^{\dagger} \geq 0$
it suffices to show that $Z$ is invertible because then the diagonal matrix $D=Z^{-1}M\left( Z^{-1}\right)^T$ is also positive semidefinite so $f^{\dagger}(A)\geq 0$ for all $A\in\A_k$. It easily follows from \eqref{kiejti_kiveve_ha_egyezik} that the inverse matrix of $Z$ is
\begin{equation*}
Z^{-1}(A_1, A_2)= (-1)^{e(A_2-A_1)} \indd{A_2-A_1 \in \E_k}   .
\end{equation*}

\subsection{Proof of Theorem \ref{theorem_main_result}: $(d)\Rightarrow (a)$}\label{subsection_d->a}
Assume given a normalized, multiplicative, non-defective multigraph parameter $f$ with non-negatine Möbius transform.
Our aim is to prove that $f \in \T$, that is \eqref{def_of_T_eq} holds. Our proof is the multigraph analogue of Corollary 2.6 of \cite{Lovasz_Szegedy_2006}, but we use reverse martingales instead of Azuma's inequality to prove \eqref{t<=_f_as}, as suggested in \cite{diaconis_janson}.

 First we define a probability measure $\probp_n$ on $\A_n$. For every $A\in \A_n: \, \probp_n(A) := f^{\dagger}(A)$. $\probp_n$ is indeed a probability measure since $f^{\dagger}\geq 0$ and using that $f$ is non-defective, multiplicative and normalized we get
\begin{equation*}
\sum_{A\in \A_n} f^{\dagger}(A) = \left( f^{\dagger}\right)^{-\dagger}(\zero_n) \stackrel{\eqref{mobius_inverzios_tetel_eq}}{=}
 f(\zero_n) = f(\zero_1)^n = 1.
\end{equation*}

Next we show that \eqref{kolmogorov_consistency_condition} holds, i.e. if \randarray{n}{}{n} has distribution $\probp_n$ and \randarray{n+1}{}{n+1} has distribution $\probp_{n+1}$ then $\randarray{n}{}{n} \sim \randarray{n+1}{}{n}$. We have to show that if we define $g: \A_n \to \R$ by
\begin{equation}\label{def_g_vonat}
 g(A):= \sum_{A' \in \A_{n+1}} \ind \left[ \left(A'\right)_{i,j=1}^n = A \right]\cdot f^{\dagger}(A'),
 \end{equation}
then $\forall \, A \in \A_n\; f^{\dagger}(A)=g(A)$. We only need to check that
$\forall \, A \in \A_n \; f(A)=g^{-\dagger}(A)$, because
\[ f \equiv g^{-\dagger} \quad \implies f^{\dagger} \equiv \left(g^{-\dagger}\right)^{\dagger} \stackrel{(\ast)}{\equiv} g \]
The proof of the identity $(\ast)$ is similar to that of Lemma \ref{mobius_inverzio_tetel}.

If $A\in \A_n$ then define $A_0\in \A_{n+1}$ by
\begin{equation*}
  A_0(i,j) =
  \begin{cases}
    A(i,j)    & \text{if $i,j\leq n$} \\
    0         & \text{if $\max(i,j)=n+1$}
  \end{cases}
\end{equation*}
(i.e., we add an isolated vertex labeled by $n+1$ to the labeled graph with adjacency matrix $A$).
Using that $f$ is non-defective, multiplicative and normalized we get
\[ g^{-\dagger}(A) \stackrel{\eqref{def_g_vonat}}{=} \left(f^{\dagger}\right)^{-\dagger}(A_0)
\stackrel{\eqref{mobius_inverzios_tetel_eq}}{=} f(A_0)=f(A)f(\zero_1)=f(A) \]

So we have constructed a series of probability measures $\probp_n$  which satisfy the consistency condition \eqref{kolmogorov_consistency_condition} thus from the Kolmogorov extension theorem it follows that there exists a random infinite array $\mathbf{X}$ with distribution $\probp$ such that
\begin{equation*}
\forall \, k \in \N \;\; \forall A \in \A_k \quad
f^{\dagger}(A)=\prob{ \mathbf{X}|_k =A}, \quad f(A)=\prob{ \mathbf{X}|_k \geq A}
\end{equation*}
holds where $\mathbf{X}|_k:=\randarray{}{}{k}$. $\rndarray{}{}$ is exchangeable because the value
 $f^{\dagger}(A)$ is invariant under relabeling. Now we show that $\rndarray{}{}$ is dissociated:
by Definition \ref{def_dissociated} we only need to check that $\mathbf{X}|_{V_1}$ is
independent of $\mathbf{X}|_{V_2}$ if $V_1, V_2 \subseteq \N$, $\abs{V_1}<+\infty$, $\abs{V_2}<+\infty$ and
  $V_1 \cap V_2=\emptyset$.
   Let $A_1 \in \A_{V_1}$, $A_2 \in \A_{V_2}$ and denote by $F_1$, $F_2$ the corresponding multigraphs. It follows from the multiplicativity of $f$ that the events $\{ \mathbf{X}|_{V_1} \geq A_1 \}$
  and $\{ \mathbf{X}|_{V_2} \geq A_2 \}$ are independent:
  \begin{multline*}
  \prob{ \{\mathbf{X}|_{V_1} \geq A_1 \} \cap \{ \mathbf{X}|_{V_2} \geq A_2 \}}
  \stackrel{\eqref{adjmatrix_of_F1F2}}{=}f(F_1 F_2)=\\ f(F_1)f(F_2)=
  \prob{ \mathbf{X}|_{V_1} \geq A_1 }  \prob{ \mathbf{X}|_{V_2} \geq A_2 }.
  \end{multline*}
Now for $i=1,2$ the family of events $\mathcal{I}_i:=\left( \{   \mathbf{X}|_{V_i} \geq A \} \right)_{A \in \A_{V_i}}$ is stable under finite intersection and $\sigma(\mathcal{I}_i)=\sigma(\mathbf{X}|_{V_i})$ by the inclusion-exclusion formula \eqref{kiejti_kiveve_ha_egyezik}, thus $\mathbf{X}|_{V_1}$ and $\mathbf{X}|_{V_2}$ are independent  by Lemma 4.2 of \cite{williams}.

We generate a (random) multigraph sequence $G_1, G_2, \dots$ by defining the adjacency matrix of $G_n$ to be
$\mathbf{X}|_n $ for all $n$. We claim that
\begin{equation}\label{t<=_f_as}
 \hominj{F}{G_n} \stackrel{\text{a.s.}}{\longrightarrow} f(F) .
\end{equation}
From  \eqref{homdensities_converge_together} and \eqref{t<=_f_as}  it follows that $\lim\limits_{n\to \infty} \homsur{F}{G_n} = f(F)$ almost surely, so $f\in \T$.

To prove \eqref{t<=_f_as} for a particular multigraph $F$ with adjacency matrix $A \in \A_k$
recall the definition of the $\sigma$-algebras $\mathcal{S}_n$ and $\mathcal{S}_{\infty}$ (see \eqref{symmetric_function_formula} and
 \eqref{def_symmetric_sigma_algebra}) and L\'evy's 'Downward' Theorem \eqref{downward}:
\begin{equation}
\condexpect{\indd{A\leq \rndarray{}{}|_k} }{\mathcal{S}_n} \stackrel{\text{a.s.}}{\longrightarrow} \condexpect{\indd{A\leq \rndarray{}{}|_k}}{\mathcal{S}_{\infty}}
\end{equation}


In our case $\mathcal{S}_{\infty}$ is the trivial $\sigma$-algebra by the \ref{dissociated}. $\implies$ \ref{sym_triv}. implication of Theorem \ref{theorem_dissociated_equivalent}, thus
$\condexpect{\indd{A\leq \rndarray{}{}|_k}}{\mathcal{S}_{\infty}}=
 \prob{A\leq \rndarray{}{}|_k}=f(A)$.

So it is enough to show that for $n\geq k$ we have $\condexpect{\indd{A\leq \rndarray{}{}|_k} }{\mathcal{S}_n} = \hominj{F}{G_n}$ in order to prove \eqref{t<=_f_as}. Recalling \eqref{graphhom_roviden} and \eqref{hom_inj} we have
\begin{equation}\label{gyima}
  \hominj{F}{G_n}= \frac{1}{n(n-1)\dots (n-k+1)} \sum_{\phiinj} \graphhom{A}{\rndarray{}{}|_n}.
\end{equation}

It directly follows from the vertex exchangeability of $\rndarray{}{}$,  \eqref{def_prop_of_cond_expectation} and \eqref{symmetric_function_formula}
 that for all $\phiinj$ we have
\begin{equation*}
\condexpect{\graphhom{A}{\rndarray{}{}|_n}}{\mathcal{S}_n}=\condexpect{ \ind_{\leq} [ A, \rndarray{}{}|_n, \mathrm{id}_{[k]}  }{\mathcal{S}_n}
\end{equation*}
where $\mathrm{id}_{[k]}$ is the identity map on $[k]$.
Thus
\begin{align*}
&\condexpect{\indd{A\leq \rndarray{}{}|_k} }{\mathcal{S}_n}= \condexpect{ \ind_{\leq} [ A, \rndarray{}{}|_n, \mathrm{id}_{[k]}  }{\mathcal{S}_n} \\
&= \frac{1}{n(n-1)\dots (n-k+1)} \sum_{\phiinj} \condexpect{  \graphhom{A}{\rndarray{}{}|_n} }{\mathcal{S}_n} \\
& \stackrel{\eqref{gyima}}{=} \condexpect{ \hominj{F}{G_n}}{\mathcal{S}_n}  \stackrel{(\ast)}{=} \hominj{F}{G_n}
\end{align*}
The equality $(\ast)$ is true since \hominj{F}{G_n} is $\mathcal{S}_n$-measurable by \eqref{symmetric_function_formula}. This concludes the proof.

\subsection{Tightness}\label{section_tightness}

In this subsection we prove Proposition \ref{proposition_tightness}. We use the notion of
tightness of a sequence of $\R^d$-valued random variables
(for the general definition see Section 5. of \cite{billingsley}): If $\mathbf{Y}_n$ is an $\R^d$-valued
random variable for each $n \in \N$ then we say that
 $\left( \mathbf{Y} \right)_{n=1}^{\infty}$ is tight if
\begin{equation*}
\lim_{m \to \infty} \max_{n}\;
 \prob{m \leq ||\mathbf{Y}_n||_{\infty}}=0.
\end{equation*}

In order to prove \eqref{convergent->tight}, assume given a convergent
sequence $\left(W_{n} \right)_{n=1}^{\infty}$ of multigraphons (see Definition \ref{def_graf_konvergencia}).
First note that the proof of
 Lemma \ref{lemma_graphkonv_konvindist} works without any change
 for multigraphons as well, so the sequence
 of random arrays $\left(\mathbf{X}_{W_n}\right)_{n=1}^{\infty}$  converges in distribution
(see Definition \ref{def_X_W},
 \eqref{eq_eobankonv} and \eqref{nemdefektiv}). In particular, the sequences
 of $\N_0$-valued random variables
 $\left(X_{W_n}(1,1)\right)_{n=1}^{\infty}$ and $\left(X_{W_n}(1,2)\right)_{n=1}^{\infty}$
converge in distribution, which implies (see Theorem 5.2 of \cite{billingsley}) that they are tight:
\begin{equation}\label{diag_nondiag_tight}
\lim_{m \to \infty} \max_{n}\;
 \prob{m \leq X_{W_n}(1,1)}=0 \qquad \lim_{m \to \infty} \max_{n} \;
 \prob{m \leq X_{W_n}(1,2)}=0
\end{equation}
Now by Definition \ref{def_X_W} we have
\begin{align}
\label{nondiag_prob_W}
\prob{m \leq X_{W_n}(1,2)}&= \int_0^1 \int_0^1 \sum_{k=m}^{\infty} W_n(x,y,k) \, \mathrm{d} x \, \mathrm{d} y \\
\label{diag_prob_W}
\prob{m \leq X_{W_n}(1,1)}&=  \int_0^1  \sum_{k=m}^{\infty} W_n(x,x,k)  \, \mathrm{d} x
\end{align}
from which \eqref{W_tight_nondiag} and \eqref{W_tight_diag} directly follow.

$ $

Now we prove \eqref{tight->convergent}.
Assume given a tight sequence of multigraphons $\left( W_{n} \right)_{n=1}^{\infty}$.
 First we prove that for all $k \in \N$ the sequence of
$\A_k$-valued
random variables $\left( X_{W_n}(i,j) \right)_{i,j=1}^k$ is tight:
\begin{equation} \label{tight_kosar}
\lim_{m \to \infty} \max_{n}\;
 \prob{m \leq \max_{i,j \in [k]} \left( X_{W_n}(i,j) \right)  }=0
\end{equation}
By  \eqref{W_tight_nondiag}, \eqref{W_tight_diag},
\eqref{nondiag_prob_W} and \eqref{diag_prob_W} we get \eqref{diag_nondiag_tight}.
Using vertex exchangeability of $\mathbf{X}_{W_n}$ we get
\[  \prob{m \leq \max_{i,j \in [k]} \left( X_{W_n}(i,j) \right)} \leq
k \cdot \prob{m \leq X_{W_n}(1,1)}
+ \binom{k}{2} \cdot \prob{m \leq X_{W_n}(1,2)}
 \]
which implies \eqref{tight_kosar}. Now by Prohorov's theorem (\cite{billingsley}, Theorem 5.1)
for each $k$ the sequence
$\left( X_{W_1}(i,j) \right)_{i,j=1}^k, \left( X_{W_2}(i,j) \right)_{i,j=1}^k, \dots$ has a subsequence
which converges in distribution. By a diagonal argument one can choose a subsequence $\left(n(m)\right)_{m=1}^{\infty}$
such that for all $k$ $\left( X_{W_{n(m)}}(i,j) \right)_{i,j=1}^k$ converges in distribution as $m \to \infty$, thus
$\mathbf{X}_{W_{n(m)}} \toindis \mathbf{X}$ for some random array $\mathbf{X}$. By the multigraphon version of
 Lemma \ref{lemma_graphkonv_konvindist} the sequence of multigraphons $\left(W_{n(m)}\right)_{m=1}^{\infty}$ converges.


\end{document}